\newcommand{\N}{\mathbb N}
\newcommand{\Z}{\mathbb Z}
\newcommand{\Q}{\mathbb Q}
\newcommand{\R}{\mathbb R}
\newcommand{\C}{\mathbb C}
\newcommand{\A}{\mathbb A}
\newcommand{\map}{\rightarrow}
\newcommand{\iso}{\cong}
\newcommand{\ten}{\otimes}
\newcommand{\vect}[1]{\boldsymbol{#1}}
\newcommand{\bigzero}{\mbox{\normalfont\Large\bfseries 0}}
\newcommand{\rvline}{\hspace*{-\arraycolsep}\vline\hspace*{-\arraycolsep}}
\newtheorem{introthm}{Theorem}
\theoremstyle{definition}
\theoremstyle{plain}
\newtheorem{definition}{Definition}[section]
\newtheorem{theorem}[definition]{Theorem}
\newtheorem{lemma}[definition]{Lemma}
\newtheorem{proposition}[definition]{Proposition}
\theoremstyle{definition}
\newtheorem{example}[definition]{Example}
\theoremstyle{remark}
\newtheorem{remark}[definition]{Remark}
\title{Toric geometry of ReLU neural networks}
\author{Yaoying Fu}
\begin{document}

\begin{abstract}
%\color{purple} so much awesomeness  \color{black}
 Given a continuous finitely piecewise linear function $f:\R^{n_0} \to \R$ and a fixed architecture $(n_0,\ldots,n_k;1)$ of feedforward ReLU neural networks, the exact function realization problem is to determine when some network with the given architecture realizes $f$. To develop a systematic way to answer these questions, we establish a connection between toric geometry and ReLU neural networks. This approach enables us to utilize numerous structures and tools from algebraic geometry to study ReLU neural networks. Starting with an unbiased ReLU neural network with rational weights, we define the ReLU fan, the ReLU toric variety, and the ReLU Cartier divisor associated with the network. This work also reveals the connection between the tropical geometry and the toric geometry of ReLU neural networks. As an application of the toric geometry framework, we prove a necessary and sufficient criterion of functions realizable by unbiased shallow ReLU neural networks by computing intersection numbers of the ReLU Cartier divisor and torus-invariant curves.
\end{abstract}

\maketitle

%\tableofcontents

\section{Introduction}\label{sec:intro}

%\color{purple}
%\kathrynnote{KL: I suggest condensing/tightening the description in the intro and stating theorems closer to beginning of paper. I'll take a stab at writing something like this below. I commented out some paragraphs.}

%Although the Universal Approximation Theorem (\cite{}) guarantees that neural networks with a single hidden layer and suitable nonlinearity—such as the ReLU activation—can approximate any continuous function on a compact domain to arbitrary accuracy, it does so by allowing arbitrarily large width, and it addresses approximation rather than exact realization.  In this paper, we take aim at a more structural question: Which functions are exactly realized by a given neural network architecture, and conversely, for a given function, which architectures realize it and how can this be determined? Our approach to these questions—and the main contribution of this paper—is to develop a dictionary between feedforward ReLU neural networks and toric geometry, by framing the functions they realize as $\mathbb{Q}$-Cartier divisors supported on rational polyhedral fans.

Although the Universal Approximation Theorem (\cite{Pinkus_1999}) guarantees that neural networks with a single hidden layer and suitable nonlinearity—such as the ReLU activation—can approximate any continuous function on a compact domain to arbitrary accuracy, it does so by allowing arbitrarily large width, and it addresses approximation rather than exact realization. In this paper, we take aim at a more structural question: Which functions are exactly realized by a given neural network architecture, and conversely, given a function, which architectures realize it—and how can this be determined? Our approach to these questions—and the main contribution of this paper—is to develop a dictionary between feedforward ReLU neural networks and toric geometry. This provides a geometric framework in which questions of exact function realization, architectural expressivity, and structural constraints for ReLU neural networks can be formulated and studied using the language of divisors, fans, and other notions from algebraic geometry. 

The central insight underlying this work is that the function computed by a feedforward ReLU neural network with no biases can be interpreted as the support function of a $\mathbb{Q}$-Cartier divisor on a rational polyhedral fan. Previous works have connected ReLU networks to tropical geometry \cite{zhang2018tropical}.  Our perspective places these observations within the broader framework of toric geometry (in which tropical geometry arises as the combinatorial data associated with divisors and their support functions).

In this paper, the connection to toric geometry is established for unbiased feedforward ReLU neural networks. Nevertheless, the more general case of affine maps also fits into the structure of toric geometry. Rather than a single fan, a general feedforward ReLU neural network will yield a family of fans. We believe that the connection built in this paper will be a good motivation for future work.

We start with feedforward ReLU neural networks with no biases and rational parameters, which we denote by $\mathcal{F}_{\vect{0}}^\Q$ (Definition \ref{def:ReLUrepresentation}). Given an architecture $(n_0,n_1,\ldots,n_k;1)$ (Definition \ref{def:ReLU}), we connect feedforward ReLU neural networks with toric geometry by defining the \emph{toric encoding map}:
\begin{align*}
\mathcal{T}:\mathcal{P} & \map \{\text{ReLU fans}\}\\
\theta &\mapsto \Sigma_{f_\theta}.
\end{align*}

%\kathrynnote{Since $\Sigma_{f_\theta}^\text{big}$ is (I think?) the only kind of fan we use, can we delete the ``big'' from the notation -- just use $\Sigma_{f_\theta}$?   }

Here $\mathcal{P}$ is the parameter space (Definition \ref{def:parameterspace}) of a fixed unbiased ReLU neural network architecture with rational weights. A parameter $\theta \in \mathcal{P}$ is mapped to the \emph{ReLU fan $\Sigma_{f_\theta}$} (Definition \ref{def:relufan}). 
%\kathrynnote{$\Sigma_{f_\theta}^{big}$ isn't *quite* the canonical polyhedral complex. The c. p. complex is just a polyhedral complex (a collection of subsets of Euclidean space that satisfies certain properties), whereas $\Sigma_{f_\theta}^{big}$ is a slightly different mathematical object. Let's talk about exactly what this object is.  For example, it lives in a different space I think, and it comes with additional information like gluing instructions. }
The ReLU fan $\Sigma_{f_\theta}$ is the polyhedral complex such that the output function $f$ and all the piecewise linear functions in the hidden layers, which can be interpreted as the neurons in the hidden layers, are linear in each cell. Following \cite{grigsby2022transversality}, this polyhedral complex is called the \emph{canonical polyhedral complex $\mathcal{C}_{f_\theta}$} associated with the ReLU neural network $f_\theta$ (Definition \ref{def:polyhedralcomplex}).

More concretely, we use the following example as an illustration of the toric encoding map. 
\begin{example}\label{ex:toricencodingmap}
    Fixing the architecture $(2,3,1;1)$ of unbiased ReLU neural networks, let $\theta=(L_1,\vect{0},L_2,0,L_3,0)$, where $L_1=\begin{bmatrix}
    0 & 1 \\
    0 & -1\\
    1 &-1
\end{bmatrix}$, $L_2=\begin{bmatrix}
    1 & -1 & 1
\end{bmatrix}$ and $L_3=1$, and thus $f_\theta: \R^2 \xrightarrow{\varsigma \circ L_1}\R^3 \xrightarrow{\varsigma \circ L_2}\R\xrightarrow{L_3}\R$. Then the output function of the ReLU neural network $f_\theta$ is  $f=\max\{0,x,y\}$. The three piecewise linear function in the first hidden layer are $f_1^1=\max\{0,y\},f_2^1=\max\{0,-y\}$ and $f_3^1=\max\{0,x-y\}$, and the piecewise linear function in the second hidden layer is $f_1^2=\max\{0,x,y\}$. The image of $\theta$ under the toric encoding map $\mathcal{T}$ is the ReLU fan $\Sigma_{f_\theta}$:
$$
    \begin{tikzpicture}[scale = 0.4]
    \draw[black,thick] (-2.5,0) -- (2.5,0);
    \draw[black, thick] (0,0) -- (0,-2.5);
    \draw[black, thick] (-2,-2) -- (2,2);
    \end{tikzpicture}.
    $$
    $\Sigma_{f_\theta}$ is the canonical polyhedral complex such that the piecewise linear functions $f,f_1^1,f_2^1,f_3^1$ and $f_1^2$ are linear in each cell of $\Sigma_{f_\theta}$.
\end{example}
%\kathrynnote{In sentence below, replace ``have defined'' with ``define'' (same for ``drawn'' to ``draw'') -- tell readers in the present tense what this paper does.}

After defining the toric encoding map, we define the \emph{ReLU toric variety} (Definition \ref{def:reluvariaty}), which is the toric variety corresponding to the ReLU fan $\Sigma_{f_\theta}$, and the \emph{ReLU Cartier divisor $D_f$} (Definition \ref{def:ReLUdivisor}), which can be understood as the output piecewise linear function of a given feedforward ReLU neural network, up to a shift by a linear term. 

We denote by $\operatorname{ReLU}^\R(n_0,k)$ to be the set of functions that can be represented by feedforward ReLU neural networks with $k$ hidden layers and real weights (Definition \ref{def:functionclass}). As a consequence of Theorem \ref{thm:functiondivisor}, for fixed depth $k$, the realizable piecewise linear functions are exactly the realizable ReLU Cartier divisors, since realizability is invariant under affine shifts.
%\kathrynnote{In the theorem below, should $n$ be $n_0$?}
%\kathrynnote{In theorem below, should $ReLU_n(k)$ be *unbiased* functions?  If so, update/clarify the notation defined in section 2.}

%\kathrynnote{This theorem is about pure functions, not functions represented in a certain way.  Say what the connection is between this theorem and the ReLU Cartier divisors (I guess that the collection of ReLU Cartier divisors associated to any given architecture is closed under addition? Or is there more than that -- for example, does the toric encoding map commute with addition, I think not but maybe there is something more like this?) }
\begin{restatable}{introthm}{FunctionDivisor}\label{thm:functiondivisor}
    If $f \in \operatorname{ReLU}^\R(n_0,k)$ and $f'-f=g$, where $g:\R^{n_0} \map \R$ is affine linear, 
    then $f' \in \operatorname{ReLU}^\R(n_0,k)$.
\end{restatable}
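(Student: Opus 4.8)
The plan is to prove this directly, by surgery on a network that realizes $f$: thread a copy of the input vector through every hidden layer and absorb the affine correction $g$ into the (affine) output layer, widening the layers but keeping the depth equal to $k$.

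First I would fix a network realizing $f\in\operatorname{ReLU}^\R(n_0,k)$, written as $f=L_{k+1}\circ\varsigma\circ L_k\circ\cdots\circ\varsigma\circ L_1$ with each $L_i\colon\R^{n_{i-1}}\map\R^{n_i}$ affine, $n_{k+1}=1$, and $\varsigma$ the coordinatewise ReLU; and write $g(x)=Ax+c$ with $A\in\R^{1\times n_0}$, $c\in\R$. The only gadget needed is the elementary identity $t=\varsigma(t)-\varsigma(-t)$ for $t\in\R$: it lets one ReLU layer relay any vector it has received, stored as its positive and negative parts. Since the hidden widths $n_1,\dots,n_k$ are unconstrained in Definition \ref{def:functionclass}, I am free to enlarge every hidden layer.

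Next I would construct $\tilde f$ with the same depth $k$, hidden widths $n_i+2n_0$, and block-structured affine maps $\tilde L_i$. In the first block of $n_i$ coordinates, $\tilde L_i$ copies the action of $L_i$, so the original network runs untouched there. In the last block of $2n_0$ coordinates: $\tilde L_1$ sends the input $x$ to $(x,-x)$, and for $2\le i\le k$, $\tilde L_i$ sends a pair $(p,q)$ to $(p-q,\,q-p)$; there are no biases in this block. An induction on $i$, using $\varsigma(x)-\varsigma(-x)=x$, shows that after the $i$-th hidden layer the last $2n_0$ activations equal $(\varsigma(x),\varsigma(-x))$ for every $1\le i\le k$. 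I then set the affine output map $\tilde L_{k+1}\colon\R^{n_k+2n_0}\map\R$ to be $\tilde L_{k+1}(z,p,q)=L_{k+1}(z)+A(p-q)+c$. Composing everything gives $\tilde f(x)=f(x)+Ax+c=f'(x)$, and $\tilde f$ is a feedforward ReLU network with exactly $k$ hidden layers and real weights, so $f'\in\operatorname{ReLU}^\R(n_0,k)$. The degenerate case $k=0$ is immediate, since $\operatorname{ReLU}^\R(n_0,0)$ is exactly the set of affine functions, which is closed under adding $g$.

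I do not anticipate a genuine obstacle: the statement is purely constructive. The points requiring care are keeping the block bookkeeping honest so that the embedded copy of the original network and the identity-propagation block never mix; checking that the modification widens rather than deepens, so the depth stays at $k$; and noting that absorbing the constant $c$ relies on the output layer being affine (i.e.\ that the class of Definition \ref{def:functionclass} carries biases, as it must, since otherwise the statement would already fail for a constant $g$). Equivalently, one can package the argument as two facts: $\operatorname{ReLU}^\R(n_0,k)$ is closed under sums, by placing two depth-$k$ networks in parallel and adding their output maps; and every affine function lies in $\operatorname{ReLU}^\R(n_0,k)$, realized with one hidden layer via $t=\varsigma(t)-\varsigma(-t)$ and then padded to depth $k$ by the same relay gadget. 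Applying these with $f'=f+g$ finishes the proof.
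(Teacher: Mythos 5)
Your proposal is correct and takes essentially the same approach as the paper: widen every hidden layer, relay an affine quantity through the depth as its positive and negative parts via the identity $t=\varsigma(t)-\varsigma(-t)$, and recombine in the affine output layer. The only difference is bookkeeping—the paper computes $g(x)$ already in the first layer and carries just the two scalars $\varsigma(g(x)),\varsigma(-g(x))$ through identity blocks (two extra neurons per layer), whereas you carry the full input as $(\varsigma(x),\varsigma(-x))$ ($2n_0$ extra neurons) and apply $g$ only at the output, which is an immaterial difference.
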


%\kathrynnote{
%I suggest starting the next paragraph by saying something like 
%``As a first application of the toric geometry framework, we use intersection number to obtain a complete classification of the functions that can be realized by... ''
%}

%\qilenote{$\operatorname{ReLU}^\Z(n_0)$ is not yet defined.}
Then we build linkage between the tropical geometry of ReLU neural networks and the toric geometry of ReLU neural networks. It was shown in \cite{zhang2018tropical} that the output piecewise linear function $f$ of a feedforward ReLU neural networks with integral weights is a tropical rational function. We denote by $\operatorname{ReLU}^\Z_{\vect{0}}(n_0,k)$ to be the set of functions that can be represented by feedforward ReLU neural networks with $k$ hidden layers, no biases and integral weights (Definition \ref{def:functionclass}). We prove the following theorem:

%\qilenote{Below: the ReLU or a ReLU?}
\begin{introthm}
    For any $k \in \N$, let $f \in \operatorname{ReLU}^\Z_{\vect{0}}(n_0,k)$ be given and suppose $f_\theta \in \mathcal{F}_{0}^\Q$ realizes $f$. If $f$ is a tropical polynomial, or a convex function in the sense of Definition \ref{def:convexfunction}, then we have
    $$
    P_{-D}=-\text{Newt}(f),
    $$
    and furthermore
    $$
    \text{Vol(Newt}(f))=\text{Vol}(\mathcal{O}_{X_{\Sigma_{f_\theta}}}(-D)).
    $$
\end{introthm}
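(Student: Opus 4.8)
The plan is to pin down the global shape of $f$ and then feed it into the standard dictionary between torus-invariant Cartier divisors on a complete toric variety and lattice polytopes. The preliminary point is that an unbiased feedforward ReLU network computes a \emph{positively homogeneous} function: every layer composes a linear map with the coordinatewise ReLU, and both commute with scaling by positive reals, so $f(tx)=tf(x)$ for all $t>0$. Hence $f(x)=\lim_{t\to\infty}t^{-1}f(tx)$ equals the maximum of the linear parts of any affine-linear description of $f$; in particular, whether $f$ is assumed to be a tropical polynomial or merely a convex function (which, being realized by an integral unbiased network, is automatically piecewise linear, positively homogeneous, and $\Z$-sloped, hence forces the same conclusion), we may write $f(x)=\max_i\langle a_i,x\rangle$ with $a_i\in\Z^{n_0}$. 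Then $\operatorname{Newt}(f)=\operatorname{conv}\{a_i\}$ is a lattice polytope whose support function $u\mapsto\max_{a\in\operatorname{Newt}(f)}\langle a,u\rangle$ is exactly $f$.

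Next I would unwind Definition~\ref{def:ReLUdivisor}: the ReLU Cartier divisor $D=D_f$ is the torus-invariant divisor on $X_{\Sigma_{f_\theta}}$ whose support function is $f$ (this is the sense in which $D_f$ ``is'' the output function), so $-D$ has support function $-f$. Because $f$ is convex and linear on every cone of $\Sigma_{f_\theta}$ --- which is exactly the defining property of the ReLU fan --- the support function $-f$ of $-D$ is concave with respect to $\Sigma_{f_\theta}$, and therefore $-D$ is basepoint free. By the standard description of basepoint-free toric divisors, $\varphi_{-D}(u)=\min_{m\in P_{-D}}\langle m,u\rangle$ for every $u$, equivalently $\max_{m'\in -P_{-D}}\langle m',u\rangle=f(u)$ for every $u\in\R^{n_0}$; since a convex body is determined by its support function and $f$ is the support function of $\operatorname{Newt}(f)$, this forces $-P_{-D}=\operatorname{Newt}(f)$, which is the first displayed identity. (If $\operatorname{Newt}(f)$ is not full-dimensional, the same argument applies after quotienting by the common lineality space, and both sides of the volume identity below vanish.)

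For the volume, recall that a torus-invariant Cartier divisor $E$ on a complete toric variety satisfies $H^0(X,\mathcal{O}(mE))=\bigoplus_{u\in mP_E\cap M}\mathbb{C}\,\chi^u$, so the first part gives $\dim H^0\!\bigl(X_{\Sigma_{f_\theta}},\mathcal{O}(-mD)\bigr)=\#\bigl(m(-\operatorname{Newt}(f))\cap M\bigr)$. By Ehrhart's theorem this count is, for $m\gg 0$, a polynomial in $m$ of degree $\dim\operatorname{Newt}(f)$ whose leading coefficient is the Euclidean volume of $-\operatorname{Newt}(f)$, which equals that of $\operatorname{Newt}(f)$. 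Plugging this into $\operatorname{Vol}(L)=\limsup_m \dim H^0(X,\mathcal{O}(mL))/(m^{n_0}/n_0!)$ and recalling that, with the lattice-normalized volume, a nef toric divisor $E$ has $\operatorname{Vol}(\mathcal{O}(E))=\operatorname{Vol}(P_E)$, one obtains $\operatorname{Vol}\bigl(\mathcal{O}_{X_{\Sigma_{f_\theta}}}(-D)\bigr)=\operatorname{Vol}(\operatorname{Newt}(f))$. More conceptually, the refinement $\Sigma_{f_\theta}$ of the normal fan $\Sigma_{\operatorname{Newt}(f)}$ of $\operatorname{Newt}(f)$ induces a proper birational toric morphism $\pi$ with $-D=\pi^*D'$, where $D'$ is the ample divisor attached to $\operatorname{Newt}(f)$ on the projective variety $X_{\Sigma_{\operatorname{Newt}(f)}}$; hence $-D$ is semiample and $\operatorname{Vol}(-D)=(-D)^{n_0}=(D')^{n_0}$ by the projection formula, the last quantity being the normalized volume of $\operatorname{Newt}(f)$.

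The step I expect to be the real obstacle is the sign and normalization bookkeeping in the second paragraph: one has to reconcile the paper's definitions of $D_f$ and of $\operatorname{Newt}(f)$ with whatever convention is in force for the polytope $P_D$ of a Cartier divisor, and in particular confirm that $f$ --- a \emph{maximum} of linear forms --- is the support function of $-D$ rather than of $D$, so that $P_{-D}$ is a genuine (nonempty) polytope; the reflection $m\mapsto -m$ hidden in this matching is exactly what produces the minus sign in $P_{-D}=-\operatorname{Newt}(f)$. A secondary technical point is that $X_{\Sigma_{f_\theta}}$ need only be complete rather than projective, which is why I would derive the volume either directly from Ehrhart's theorem or by transporting the computation to the projective variety $X_{\Sigma_{\operatorname{Newt}(f)}}$ along $\pi$, instead of invoking $\operatorname{Vol}=(-D)^{n_0}$ on $X_{\Sigma_{f_\theta}}$ itself.
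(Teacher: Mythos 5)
Your proposal is correct and takes essentially the same route as the paper: convexity of $-f$ (in the sense of Definition \ref{def:basepointfree}) makes $-D$ basepoint free, so $P_{-D}$ is determined by the Cartier data $\{m_\sigma\}$, which are the slope vectors of $f$ (Lemma \ref{lem:slopevector}), i.e.\ the exponents of the tropical polynomial up to sign, yielding $P_{-D}=-\mathrm{Newt}(f)$; the volume identity then follows by counting lattice points of $mP_{-D}$ via global sections and Ehrhart's theorem, exactly as in Lemma \ref{lem:mixedvolume} and Theorem \ref{thm:polynomialpdvolume}. Your explicit reduction of the ``convex function'' case to the tropical-polynomial case via positive homogeneity of unbiased networks, and the alternative semiample/projection-formula computation of the volume, are useful supplements that the paper leaves implicit but do not change the underlying argument.
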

Here $D$ is the ReLU Cartier divisor, $P_{-D}$ is the polytope associated with the Cartier divisor $-D$ (Definition \ref{def:polytopeofdivisor}), Newt($f$) is the \emph{Newton polytope} associated with the tropical polynomial $f$ (Definition \ref{def:newtonpolytope}), Vol(Newt($f$)) is the \emph{mixed volume of the Newton polytope} (Definition \ref{def:mixedvolume}) and Vol$(\mathcal{O}_{X_{\Sigma_{f_\theta}}}(-D))$ is the \emph{volume of the line bundle} $\mathcal{O}_{X_{\Sigma_{f_\theta}}}(-D)$ (Definition \ref{def:volumeoflinebundle}). 

When $f$ is a tropical rational function rather than a tropical polynomial, there is in general no analogue of the Newton polytope associated with $f$ from the tropical geometry perspective. However, from the toric geometry point of view, the polytope $P_{D}$ or $P_{-D}$ is always well-defined. When $f$ is a tropical polynomial, by the Theorem above, we see that the two polytopes from tropical and toric geometry coincide with each other. Therefore, we believe that the polytope $P_D$ associated with the ReLU Cartier divisor $D$ is a good generalization of the Newton polytope in this context of ReLU neural networks.

%\kathrynnote{For the newbies, say what the significance of Theorem 2 is -- that it generalizes ... or it reframes ... in the broader context of ..., etc. }
%\yaoyingnote{I added a paragraph "When $f$ is a tropical rational function...". Does that look good?}
As a first application of the toric geometry framework, we use intersection numbers of divisors and curves (Definition \ref{def:Cartierdivisorintersectcurve}) to obtain a complete classification of the functions that can be realized by unbiased shallow ReLU neural networks with rational weights, which we denote by $\operatorname{ReLU}_{\vect{0}}^\Q(n_0,1)$ (Definition \ref{def:functionclass}).

\begin{restatable}{introthm}{OneHiddenLayer}\label{thm:onehiddenlayer}
    Let $f \in \operatorname{ReLU}_{\vect{0}}^\Q(n_0,1)$ and suppose $f_\theta \in \mathcal{F}_{0}^\Q$ realizes $f$. The reduced ReLU representation of $f$ is $f_\theta^{\text{red}}: \R^{n_0} \xrightarrow{\varsigma \circ L_1} \R^{n_1} \xrightarrow{L_2}\R$. Let $H^{(1)}_1,\ldots,H^{(1)}_{n_1}$ denote the $n_1$ hyperplanes in $\R^{n_0}$. Then for any two walls, $\tau_1,\tau_2 \subseteq H^{(1)}_i$ for $i\in\{1,\ldots,n_1\}$, we have
    $$
    D_f\cdot V(\tau_1)=D_f \cdot V(\tau_2),
    $$ where $D_f$ is the ReLU Cartier divisor supported on the ReLU fan $\Sigma_{f^\text{red}_\theta}$. 
\end{restatable}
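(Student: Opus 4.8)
The plan is to express each intersection number $D_f\cdot V(\tau)$ purely in terms of the ``bending'' of the support function $f$ across the wall $\tau$, and then to show that this bending, together with the only lattice datum that enters, is the same for every wall contained in a given hyperplane $H_i^{(1)}$.

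\textit{Setup and tool.} Writing $L_1$ with rows $w_1,\dots,w_{n_1}$ and $L_2=(a_1,\dots,a_{n_1})$, the reduced network computes $f=\sum_{j=1}^{n_1}a_j\max\{0,\ell_j\}$ with $\ell_j=\langle w_j,\cdot\rangle$ and $H_j^{(1)}=\{\ell_j=0\}$. For a shallow network the canonical polyhedral complex is exactly the fan $\Sigma_{f_\theta^{\mathrm{red}}}$ of the central hyperplane arrangement $\{H_1^{(1)},\dots,H_{n_1}^{(1)}\}$, so $X_{\Sigma_{f_\theta^{\mathrm{red}}}}$ is a complete (generally singular) toric variety and $D_f$ is the $\Q$-Cartier divisor whose support function is $f$, up to the fixed affine shift; since adding an affine function changes every local linear piece of $f$ by the same amount, the shift plays no role below. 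Fix a wall $\tau\subseteq H_i^{(1)}$ and let $\sigma,\sigma'$ be the two maximal cones with $\tau=\sigma\cap\sigma'$, with Cartier data $m_\sigma,m_{\sigma'}\in M_\Q$. Then $m_\sigma-m_{\sigma'}$ vanishes on $\mathrm{span}(\tau)$, hence descends to an element $\overline{m_\sigma-m_{\sigma'}}$ of the rank-one lattice $M(\tau)_\Q=(M\cap\mathrm{span}(\tau)^\perp)_\Q$, and recall that $D_f\cdot V(\tau)$ may be computed as the pairing of $\overline{m_\sigma-m_{\sigma'}}$ with the primitive generator $n_\tau$ of the $\sigma'$-ray of the one-dimensional quotient fan on $N(\tau)=N/N_\tau$ (Definition \ref{def:Cartierdivisorintersectcurve}).

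\textit{Local computation.} Since the representation is reduced, the $H_j^{(1)}$ are pairwise distinct hyperplanes through the origin, so a codimension-one cone of the arrangement fan lies in exactly one of them; hence $\mathrm{relint}(\tau)$ misses every $H_j^{(1)}$ with $j\neq i$, which forces $\sigma$ and $\sigma'$ to lie in the same closed half-space of $H_j^{(1)}$ for all $j\neq i$. Therefore every summand $a_j\max\{0,\ell_j\}$ with $j\neq i$ is given by one and the same linear functional on $\sigma\cup\sigma'$, whereas $a_i\max\{0,\ell_i\}$ equals $0$ on one of $\sigma,\sigma'$ and $a_i\ell_i$ on the other, so
\[
m_\sigma-m_{\sigma'}=\varepsilon_\tau\,a_i\,w_i,\qquad\varepsilon_\tau=+1\ \text{if}\ \sigma\subseteq\{\ell_i>0\},\ \ \varepsilon_\tau=-1\ \text{otherwise.}
\]
In particular $m_\sigma-m_{\sigma'}\in(H_i^{(1)})^\perp=\R w_i$, consistent with it annihilating $\mathrm{span}(\tau)=H_i^{(1)}$.

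\textit{Conclusion and main obstacle.} The key point is that the rank-one lattice $M(\tau)=\Z^{n_0}\cap(H_i^{(1)})^\perp$ depends only on $H_i^{(1)}$, hence is the same for all walls $\tau\subseteq H_i^{(1)}$; writing $w_i=c_i\,w_i^{\mathrm{prim}}$ with $c_i\in\Q_{>0}$ and $w_i^{\mathrm{prim}}$ its primitive generator, $c_i$ is determined by $i$ alone, so $\overline{m_\sigma-m_{\sigma'}}=\varepsilon_\tau a_i c_i\,w_i^{\mathrm{prim}}$. Since $w_i^{\mathrm{prim}}$ and $n_\tau$ are primitive generators of dual rank-one lattices we have $\langle w_i^{\mathrm{prim}},n_\tau\rangle=\pm1$, and one checks that the sign here flips with $\varepsilon_\tau$ (both are governed by which half-space of $H_i^{(1)}$ contains $\sigma'$), so these two ambiguities cancel and $D_f\cdot V(\tau)=-a_i c_i$ for every wall $\tau\subseteq H_i^{(1)}$; in particular $D_f\cdot V(\tau_1)=D_f\cdot V(\tau_2)$. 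The step I expect to be the real work is justifying the tool in the form stated: because the chambers of a central arrangement need not be simplicial, $X_{\Sigma_{f_\theta^{\mathrm{red}}}}$ is genuinely singular and $D_f$ is only $\Q$-Cartier, so one must verify that passing to the rank-one quotient $N(\tau)$ truly absorbs all of the cone-dependent combinatorics of the wall — i.e., that no further multiplicities depending on the particular cones $\sigma,\sigma'$ survive in $D_f\cdot V(\tau)$ — and then track the orientation of $N(\tau)$ against $\varepsilon_\tau$ carefully enough to pin down the common value.
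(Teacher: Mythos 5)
Your local computation rests on the claim that in the reduced representation the hyperplanes $H^{(1)}_1,\dots,H^{(1)}_{n_1}$ are pairwise distinct, so that exactly one summand $a_i\max\{0,\ell_i\}$ bends across a wall $\tau\subseteq H^{(1)}_i$. That claim is not guaranteed by Definition \ref{def:reducedReLU}: the reduction only forbids two rows that are parallel \emph{of the same direction}; antiparallel rows $w_{i'}=-w_i$ survive the algorithm of Lemma \ref{lem:reducedReLU} and define the same hyperplane $H^{(1)}_{i'}=H^{(1)}_i$. For a wall in such a doubled hyperplane two summands bend, so $m_\sigma-m_{\sigma'}$ is not $\varepsilon_\tau\,a_i w_i$ but $\varepsilon_\tau\,(a_i+a_{i'})\,w_i$ (both rows being primitive, the antiparallel row is exactly $-w_i$). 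This is precisely the case the paper isolates: it proves Lemma \ref{lem:oppositehyperplane} (the neuron divisors coincide, $D_{f_i}=D_{f_{i'}}$, when the rows are opposite) and runs a case split in the proof, obtaining $D_f\cdot V(\tau)=-(b_{1i}+b_{1i'})$ there and $-b_{1i}$ otherwise. Your argument is easily patched — the jump across any wall of $H^{(1)}_i$ is still a multiple of $w_i$ with coefficient depending only on the hyperplane, which is all the theorem needs — but as written the step ``$\mathrm{relint}(\tau)$ misses every $H^{(1)}_j$ with $j\neq i$'' has a genuine hole.

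Apart from this, your route matches the paper's in substance but justifies the key step differently. The paper decomposes $D_f=\sum_i b_{1i}D_{f_i}$ into neuron divisors, kills the contributions of neurons whose hyperplane does not contain the wall (Lemma \ref{lem:zerointersection} together with linearity, Proposition \ref{prop:intersectionnumber}), and then shows $D_{f_i}\cdot V(\tau)=-1$ by an explicit integral-matrix and gcd computation proving $\bigl\vert \sum_k c_k a_{ik}\bigr\vert=1$; you instead note that $m_\sigma-m_{\sigma'}$ descends to $M\cap\mathrm{span}(\tau)^\perp$, which is dual to the rank-one quotient $N(\tau)=N/N_\tau$, so primitive generators pair to $\pm1$, with the sign cancelling against $\varepsilon_\tau$. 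That duality argument is correct (the sublattice $N_\tau$ is saturated, so the pairing is perfect) and, since the reduced row $w_i$ is itself primitive (your $c_i=1$), it yields the same common value $-a_i$ (the paper's $-b_{1i}$); it is arguably more transparent than the paper's matrix manipulation. Finally, your closing concern about whether passing to $N(\tau)$ ``absorbs all of the wall combinatorics'' is moot inside this paper's framework: Definition \ref{def:Cartierdivisorintersectcurve} together with Remark \ref{rmk:QCartierintersection} takes exactly that formula as the definition of $D\cdot V(\tau)$ for ($\Q$-)Cartier divisors, so there are no further multiplicities to rule out.
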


%\qilenote{Some intuition of intersection numbers from ML point of view?}
%\yaoyingnote{I added a paragraph from ML point of view "Intuitively...". Does that look good?}
Here, the reduced ReLU representation of a function realizable by an unbiased shallow ReLU neural network refers to the architecture where all weights are integral, and the weight matrix for the hidden layer
\begin{itemize}
    \item has no zero row,
    \item has row-wise greatest common divisor 1, and
    \item no row is parallel to another of the same direction.
\end{itemize} The notation $D_f \cdot V(\tau_1)$ and $D_f \cdot V(\tau_2)$ computes the intersection numbers of the ReLU Cartier divisor $D_f$ with the curve $V(\tau_1)$ and $V(\tau_2)$ respectively.

Intuitively, the intersection numbers can be understood as how much the output piecewise linear function $f$ bends when changing from one linear piece to another along the walls.

Theorem \ref{thm:onehiddenlayer} is a necessary condition of a function realizable by an unbiased shallow ReLU neural networks with rational weights. Conversely, we prove a sufficient condition as follows.
\begin{restatable}{introthm}{OneHiddenLayerConverse}\label{thm:onehiddenlayerconverse}
    Let $f:\R^{n_0} \map \R$ be a finitely-piecewise linear function. If extending the codimension one loci of the non-linear locus of $f$ to be full hyperplanes yields a fan $\Sigma$ in $\R^{n_0}$, and the ReLU Cartier divisor $D_f$, supported on the fan $\Sigma$, satisfies the following:
    $$
    D_f \cdot V(\tau_1)=D_f\cdot V(\tau_2)
    $$
    for any two walls $\tau_1,\tau_2$ coming from the same full hyperplane in $\Sigma$, then $f \in \operatorname{ReLU}_{\vect{0}}^\Q(n_0,1)$.
\end{restatable}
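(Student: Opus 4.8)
The plan is to argue constructively. An unbiased shallow ReLU network with rational weights computes a function of the form $x \mapsto \sum_{i} c_i \max\{0,\langle u_i,x\rangle\}$ with $c_i \in \Q$ and $u_i \in \Q^{n_0}$ (the rows of the hidden weight matrix). Since a linear form $\ell$ can be written $\ell = \max\{0,\ell\}-\max\{0,-\ell\}$, it suffices to produce a decomposition $f = \ell + \sum_{j} c_j\max\{0,\langle u_j,x\rangle\}$, where $\ell$ is a rational linear form, $c_j \in \Q$, and $u_j$ is the primitive integral normal of the hyperplane $H_j$ appearing in $\Sigma$; such an $f$ then lies in $\operatorname{ReLU}_{\vect{0}}^\Q(n_0,1)$. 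Note that $f$ is linear on each cone of $\Sigma$ and has rational linear pieces, since $D_f$ is a $\Q$-Cartier divisor on $X_\Sigma$; in particular $f(0)=0$.

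The first step is to convert the intersection numbers into ``bending coefficients'' of $f$. Each wall $\tau$ of $\Sigma$ lies in the relative interior of a unique hyperplane $H_j$, and if $\sigma^{+},\sigma^{-}$ denote the two maximal cones adjacent to $\tau$ on the positive and negative side of $H_j$, then the linear pieces $m_{\sigma^{+}},m_{\sigma^{-}}$ of $f$ satisfy $m_{\sigma^{+}}-m_{\sigma^{-}} = \lambda_\tau u_j$ for a unique $\lambda_\tau \in \Q$, because this difference vanishes on $\operatorname{span}(\tau)=H_j$. The key claim is that $D_f\cdot V(\tau) = \epsilon_j\lambda_\tau$, where $\epsilon_j\in\{\pm1\}$ depends only on $H_j$, not on $\tau$. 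Indeed, since $\operatorname{span}(\tau)=H_j$, the rank-one quotient lattice $N(\tau)=N/(H_j\cap N)$, the star fan of $\tau$ in it, and hence the curve $V(\tau)\cong\mathbb{P}^1$, all depend only on $H_j$; the image of $\sigma^{+}$ has a fixed primitive generator $\nu_j\in N(\tau)$, and the toric formula for the degree of $\mathcal{O}_{X_\Sigma}(D_f)$ on $V(\tau)$ gives $D_f\cdot V(\tau)=\langle m_{\sigma^{+}}-m_{\sigma^{-}},\nu_j\rangle=\lambda_\tau\langle u_j,\nu_j\rangle$, with $\langle u_j,\nu_j\rangle=\pm1$ since $u_j$ generates the dual lattice $M(\tau)=\operatorname{Hom}(N(\tau),\Z)$. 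Up to the sign conventions in the definition of $D_f$, this is the same local computation that underlies the forward implication, Theorem \ref{thm:onehiddenlayer}. Consequently, the hypothesis $D_f\cdot V(\tau_1)=D_f\cdot V(\tau_2)$ for all walls $\tau_1,\tau_2$ on a common $H_j$ is equivalent to $\lambda_{\tau_1}=\lambda_{\tau_2}$; call the common value $c_j\in\Q$.

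Next I would set $g:=f-\sum_j c_j\max\{0,\langle u_j,x\rangle\}$ and verify that $g$ has vanishing bending across every wall. For a wall $\tau\subseteq H_j$, each summand $\max\{0,\langle u_i,x\rangle\}$ with $i\ne j$ is linear near $\operatorname{relint}(\tau)$, since $\langle u_i,\cdot\rangle$ does not vanish there, and so contributes $0$; the summand $\max\{0,\langle u_j,x\rangle\}$ has bending coefficient $1$ across $\tau$; and $f$ has bending coefficient $c_j$ across $\tau$ by construction. Hence $g$ has bending coefficient $c_j-c_j=0$ across every wall of $\Sigma$. Since $g$ is linear on each cone of the complete fan $\Sigma$ and $\Sigma$ is connected in codimension one, $g$ carries the same linear form on all maximal cones, i.e.\ $g=\ell$ for a single rational linear form $\ell$. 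This gives $f=\ell+\sum_j c_j\max\{0,\langle u_j,x\rangle\}$ with rational data, so $f\in\operatorname{ReLU}_{\vect{0}}^\Q(n_0,1)$.

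The main obstacle is the first step: pinning down the precise relation between $D_f\cdot V(\tau)$ and the bending coefficient $\lambda_\tau$, and in particular verifying that the proportionality sign $\epsilon_j$ is genuinely independent of the chosen wall $\tau$ on $H_j$ — it is exactly this wall-independence that makes the hypothesis usable to define $c_j$ unambiguously. Everything afterward is bookkeeping, together with two standard facts: a function that is linear on each cone of a complete fan and has no gradient jump across any wall is globally linear, and an unbiased shallow ReLU network with rational weights realizes precisely the $\Q$-linear combinations of hinge functions $\max\{0,\langle u,x\rangle\}$.
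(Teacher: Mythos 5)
Your proof is correct and follows essentially the same route as the paper: you construct the candidate shallow network whose hidden rows are the primitive normals $u_j$ of the hyperplanes and whose output weights are (up to a fixed sign) the common intersection numbers on each hyperplane, and you absorb the leftover linear term just as the paper does via Theorem \ref{thm:functiondivisor} (equivalently, your identity $\ell=\max\{0,\ell\}-\max\{0,-\ell\}$). The only difference is that you explicitly verify the step the paper merely asserts---that $f$ minus the candidate network is globally linear---through the wall-independent proportionality $D_f\cdot V(\tau)=\epsilon_j\lambda_\tau$ and connectedness of the complete fan in codimension one, which is a sound and welcome filling-in of detail.
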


%\kathrynnote{I moved this paragraph -- about structure of the paper and how to read it -- here, so that it would come after the statement of the theorems, and help the reader transition from reading the introduction to reading the rest of the paper. I also split it into two paragraphs -- one about how to read, one about the structure of the paper. }

For the convenience of readers, preliminaries on toric geometry are provided. Algebraic geometers may safely skip \S \ref{sec:toric}. For readers from other areas, some definitions may be safely skimmed on a first reading and referenced only as needed to understand the proofs of the main results. Rather than digesting all the abstract definitions in \S \ref{sec:toric} and \S \ref{sec:ReLUtoric}, it would be a nice idea to understand the Examples \ref{ex:affinealgebra}, \ref{ex:cx+-}, \ref{ex:a1}, \ref{ex:weildivisor}, \ref{ex:cartierexample},  \ref{ex:fanofp1} and \ref{ex:relutoric}.

The structure of the paper is as follows. We begin in \S \ref{sec:relatedwork} by discussing related works like expressive power results in \cite{arora2018understanding}, \cite{bakaev2025better}, \cite{haase2023lower}, \cite{hertrich2023towards}, and tropical geometry of deep neural networks in \cite{zhang2018tropical}. Then we introduce necessary background on ReLU neural networks and toric geometry in \S \ref{sec:ReLU} and \S \ref{sec:toric}. Toric geometry of ReLU neural networks is rigorously established in \S \ref{sec:ReLUtoric}. In \S \ref{sec:tropicaltoric}, we have built the linkage between the tropical geometry of ReLU neural networks and the toric geometry of ReLU neural networks. As a first application of the toric geometry framework, in \S \ref{sec:onehiddenlayer} we obtain a complete characterization of the set of functions which can be realized by unbiased one-hidden-layered, or shallow ReLU neural networks. 
%Besides, we have proved the following theorem:
%\kathrynnote{Since I moved the theorems above, this outline is now not complete; you'll want to finish it -- just say what each section does.}
\subsection*{Acknowledgements} The author was partially supported by NSF grants DMS 2133822 and 2001089. The author would like to express her sincere gratitude towards her advisors Qile Chen and Kathryn Lindsey for their cherishable advice and tremendous support. The author would also like to thank Karina Cho for studying \emph{Tropical Geometry} together; Tingting Fang and Zijian Han for discussion of proof of Theorem ~\ref{thm:onehiddenlayer}; Yufan Ren for great inspiration from computer science point of view; Shelby Cox, Bella Finkel, Elisenda Grigsby, Jiayi Li, Jose Israel Rodriguez, Rishi Sonthalia and Maximilian Wiesmann for invaluable discussions and encouragement.   

\section{Related Work}\label{sec:relatedwork}
Here we define some function class notations in this paper. 
\begin{definition}\label{def:functionclass}
\begin{align*}
    \operatorname{ReLU}^R(n,k) & \coloneqq \{f:\R^n \map \R \mid f \text{ can be represented by a }\\
    & k+1\text{-layer ReLU neural network with weights in $R$}\} \\
    \operatorname{ReLU}_{\vect{0}}^R(n,k) & \coloneqq \{f:\R^n \map \R \mid f \text{ can be represented by a bias-free }\\
    &k+1\text{-layer ReLU neural network with weights in $R$}\}\\
    \operatorname{CPWL}_n^R & \coloneqq \{f:\R^n \map \R \mid f \text{ is continuous and finitely }\\
    & \text{piecewise linear with coefficients in $R$}\}\\
    \operatorname{MAX}^R_n(p) & \coloneqq \{f:\R^n \map \R \mid f \text{ is a linear combination of}\\
    & p\text{-term max functions with coefficients in $R$}\}.
\end{align*}
Here $R$ can be any ring. In this paper, $R$ is usually $\Q$ or $\Z$. Following ~\cite{hertrich2023towards}, a function $f$ is called a $p$-term max function if it can be expressed as maximum of $p$ affine terms, that is, $f(x)=$ max$\{l_1(x),\ldots,l_p(x)\}$ where $l_i:\R^n \map \R$ is affine linear for $i \in \{1,\ldots,p\}$. We sometimes suppress the number of hidden layers $k$, and write $\operatorname{ReLU}^R(n) \coloneqq \bigcup_{k=0}^\infty \operatorname{ReLU}^R(n,k)$ and $\operatorname{ReLU}_{\vect{0}}^R(n) \coloneqq \bigcup_{k=0}^\infty \operatorname{ReLU}_{\vect{0}}^R(n,k)$.
\end{definition}
Expressivity and exact function realization problems of ReLU neural networks have been studied in works like ~\cite{arora2018understanding}, ~\cite{bakaev2025better}, ~\cite{haase2023lower}, ~\cite{hertrich2023towards}. In ~\cite{arora2018understanding}, a depth lower bound $k^*=\lceil \log_2(n+1) \rceil$ of ReLU neural networks was proved so that $\operatorname{CPWL}^\R_n \subseteq \operatorname{ReLU}^\R(n,k^*)$, and this bound was conjectured to be the optimal bound. This result took advantage of the result that $\operatorname{CPWL}^\R_n \subseteq \operatorname{MAX}^\R_n(n+1)$, which was proved in ~\cite{wang2005generalization}. In ~\cite{hertrich2023towards}, it was shown that proving $k^*$ to be the optimal depth bound was equivalent to proving that the particular function $f=\max\{0,x_1,\ldots,x_{2^k}\} \not \in \operatorname{ReLU}^\R_{\vect{0}}(2^k,k+1)$. In ~\cite{haase2023lower}, the depth lower bound $k^*=\lceil \log_2(n+1) \rceil$ was proved to be the optimal bound for ReLU neural networks with integral weights. However, in ~\cite{bakaev2025better}, the depth lower bound $k^*$ was disproved to be optimal by providing a better lower bound $k^{**}=\lceil \log_3(n-1)\rceil+1$ for ReLU neural networks with real weights.

The connection between tropical geometry and ReLU neural networks was established in ~\cite{zhang2018tropical}. With the tropical geometry framework, works like ~\cite{haase2023lower} have benefited from polyhedral and polytopal geometry. 

Algebraic geometry has been attached great importance by works studying polynomial neural networks like ~\cite{finkel2025activationdegreethresholdsexpressiveness}, ~\cite{kubjas2024geometry}, and ~\cite{marchetti2025algebra}.

\section{Background on ReLU Neural Networks}\label{sec:ReLU}
\begin{definition} \label{def:ReLUoperator}
    For any $n \in \N$, the \textit{rectified linear unit} (ReLU) is the map $\operatorname{ReLU}:\R \map \R$ defined by $\operatorname{ReLU}(x)=\max\{0,x\}$. Let $\varsigma: \R^n \to \R^n$ denote the function that applies $\operatorname{ReLU}$ to each coordinate:
    $$
    \varsigma(\vect{x})=(max\{0,x_1\},max\{0,x_2\},\ldots,max\{0,x_n\})
    $$
    where $\vect{x}=(x_1,x_2,\ldots,x_n)$. 
\end{definition} 
\begin{definition} \label{def:ReLU}
    For any number of hidden layers $k\in \N$, a $(k+1)$-layer \emph{feedforward ReLU neural network} is defined as follows:
    $$
    f_\theta:\R^{n_0} \xrightarrow{\varsigma \circ A_1}\R^{n_1} \xrightarrow{\varsigma \circ A_2} \R^{n_2} \xrightarrow{\varsigma \circ A_3} \cdots \xrightarrow{\varsigma \circ A_k} \R^{n_k} \xrightarrow{A_{k+1}} \R
    $$
    where $n_0,n_1,\ldots,n_k \in \N$, and $A_i:\R^{n_{i-1}} \map \R^{n_i}$ are all affine-linear maps. Such a neural network $f_\theta$ is said to be of \emph{architecture} $(n_0,n_1,\ldots,n_k;1)$, \emph{depth} $k+1$, \emph{size} $\sum_{j=1}^k n_j$, $n_i$ is the \emph{width} of the $i$th layer, and each $\R^{n_i},\forall 1\leq i\leq k$, is called a \emph{hidden layer}. Moreover, each affine map $A_i$ is of the form
    $$
    A_i = \begin{bmatrix} M_i \rvert \vect{b_i} \end{bmatrix}
    $$
    where $M_i \in$ Mat$_{{n_i},{n_{i-1}}}(\R)$ and entries are called \emph{weights}, while $\vect{b_i} \in \R^{n_i}$ is called the \emph{bias} of the $i$th layer. A feedforward ReLU neural network is called \emph{shallow} if $k=1$ and \emph{deep} otherwise. Here the ReLU map $\varsigma$ is called the \emph{activation function}.
\end{definition}
\begin{definition}\label{def:ReLUrepresentation}
    We denote by $\mathcal{F}_0^{\mathbb{Q}} = \{f_\theta\}_{\theta}$ to be the set of all feedforward ReLU neural network representations of functions with no biases and rational parameters -- that is, the set of all formal compositions 
 \begin{equation} \label{eq:formalcomposition}  f_\theta:\R^{n_0} \xrightarrow{\varsigma \circ A_1}\R^{n_1} \xrightarrow{\varsigma \circ A_2} \R^{n_2} \xrightarrow{\varsigma \circ A_3} \cdots \xrightarrow{\varsigma \circ A_k} \R^{n_k} \xrightarrow{A_{k+1}} \R \end{equation}
  where $n_0,\ldots,n_k$ are natural numbers, each $A_i$ is a matrix with entries in $\mathbb{Q}$ that represents a \emph{linear} map, and $\varsigma$ is ReLU map.  We
use the subscript $\theta$ in the the notation $f_\theta$ to encode  
 the information of how the function $f$ is represented as a neural network; concretely, $f$ represents a pure function, while $f_\theta$ is a representation of the function $f$ in the form \eqref{eq:formalcomposition}.
\end{definition}
\begin{definition}[ \cite{grigsby2022transversality}]\label{def:hyperplanearrangement} 
    Let 
$$
    f_\theta:\R^{n_0} \xrightarrow{F_1 \coloneqq \varsigma \circ A_1}\R^{n_1} \xrightarrow{F_2 \coloneqq \varsigma \circ A_2} \R^{n_2} \xrightarrow{F_3 \coloneqq \varsigma \circ A_3} \cdots \xrightarrow{F_k \coloneqq \varsigma \circ A_k} \R^{n_k} \xrightarrow{A_{k+1}} \R
$$
be a ReLU neural network. For each layer map $F_i,\forall i \in\{1,\ldots,k\}$, denoted as $A_{i,j}=\begin{pmatrix}
    M_{i,j} \vert b_{i,j}
\end{pmatrix}$ the $j$th row of $A_i$, we consider the following set:
$$
S_j^{(i)} \coloneqq \left \{ \vect{x}\in \R^{n_{i-1}} \mid (M_{i,j} \vert b_{i,j}) \cdot (\vect{x}\vert 1)=0\right \} \subseteq \R^{n_{i-1}}.
$$
When $M_{i.j}=\vect{0}$, $S_j^{(i)}$ is said to be degenerate. When $S_j^{(i)}$ is nondegenerate, denoted as $H_j^{(i)}$ is a \emph{hyperplane associated with the $i$th layer map $F_i$}. A \emph{hyperplane arrangement associated with $i$th layer} is the set of nondegenerate hyperplanes, i.e., $\mathcal{A}^{(i)}\coloneqq \left \{H_1^{(i)},\ldots,H_{m_i}^{(i)} \right \}$, for some $m_i \in \N$ with $m_i \leq n_i$.
\end{definition}

\begin{remark}\label{rmk:hyperplanearrangement}
    For each layer map $F_i,\forall i \in\{1,\ldots,k\}$, if all $S_j^{(i)}$'s are nondegenerate, then $m_i=n_i$. Otherwise, $m_i < n_i$.
\end{remark}
%\kathrynnote{You should change $j-th$ to $j$th throughout }

%\begin{remark}\label{rmk:nonzerorow}
    %Let $f$ be the output function of a feedforward ReLU neural network $f_\theta$ defined in Definition \ref{def:ReLU}. If there is a zero row, say $j$th row, on any affine linear map $A_i$ for $i \in\{1,\cdots,k+1\}$, then we will be able to get the same output function $f$ by considering the new ReLU neural network given by removing the zero row, that is, the $j$th row in $A_i$ and the $j$th row in $\R^{n_i}$, and modifying $M_{i+1}$ to be an $[n_{i+1} \times (n_i-1)]$ matrix by deleting the $j$th column of $M_{i+1}$. Based on this observation, we will assume that there is no zero row in any affine-linear maps of the ReLU neural networks that we consider throughout this paper. 
%\end{remark}

%\kathrynnote{Regarding remark above -- should clarify then whether notation like $ReLU_n(k)$ is only functions that can be realized when there is no zero row.  Can anything with a zero row be realized by something without a zero row?  If so, that would eliminate this discrepancy.}

%\kathrynnote{I think it is weird write the bias terms in bold font but not other objects like matrices -- I suggest removing the bold from the biases, writing them in normal font. }

\begin{definition}\label{def:parameterspace}
    For an architecture $(n_0,n_1,\ldots,n_k;1)$, we define the \emph{parameter space}
    $$
    \mathcal{P}\coloneqq \R^D
    $$
    where a parameter $\theta \coloneqq (M_1,b_1,\ldots,M_k,b_k,A_{k+1})$ consists of all weights in the weight matrices $M_i$, biases $b_i,\forall i \in\{1,\ldots,k\}$ and entries of the affine map of the last layer $A_{k+1}$. Accordingly, $D=n_k+1+\sum_{i=1}^k n_i(n_{i-1}+1)$.
\end{definition}
%\kathrynnote{Should also clarify whether we want $ReLU_n(k)$ to refer to neural networks with rational rates (as opposed to real weights).  Maybe define notation? }

%\kathrynnote{I prefer to call this \emph{finitely piecewise linear}, to emphasize the finite number of pieces}

\begin{definition} \label{def:piecewiselinear}
    A function $f:\R^n \map \R$ is said to be \emph{finitely piecewise linear} if the function is affine linear on finitely many pieces of the domain. In particular, each piece of the domain is a \emph{polyhedral set}. The polyhedral set is a set that can be expressed as the intersection of a finite set of closed half-spaces.
\end{definition}

\begin{definition}\label{def:convexfunction}
    A function $f:\R^n \map \R$ is \emph{convex} if for all $x,y\in \R$, and all $\lambda \in [0,1]$, we have 
        $$
        f(\lambda x+(1-\lambda)y) \leq \lambda f(x)+(1-\lambda)f(y)
        $$

    We say that $f$ is \emph{concave} if $-f$ is convex.
\end{definition}

%\kathrynnote{Before definition below, should say what a hyperplane associated to a layer map is, what the the hyperplane arrangement for a layer is}

\begin{definition}\label{def:benthyperplanearrangement}
    Let 
$$
    f_\theta:\R^{n_0} \xrightarrow{F_1 \coloneqq \varsigma \circ A_1}\R^{n_1} \xrightarrow{F_2 \coloneqq \varsigma \circ A_2} \R^{n_2} \xrightarrow{F_3 \coloneqq \varsigma \circ A_3} \cdots \xrightarrow{F_k \coloneqq \varsigma \circ A_k} \R^{n_k} \xrightarrow{A_{k+1}} \R
$$
be a ReLU neural network and let $\mathcal{A}^{(i)}=\{H^{(i)}_1,\ldots,H^{(i)}_{m_i}\}$ denote the hyperplane arrangement in $\R^{n_{i-1}}$, for any $i\in \{2,\ldots,k\}$. \emph{A bent hyperplane associated with the $i$th layer of $f$}, for any $i\in \{2,\ldots,k\}$, is the preimage in $\R^{n_0}$ of any hyperplane $H^{(i)}_j\subseteq \R^{n_{i-1}}$ associated with the $i$th layer map:
$$
(F_{i-1} \circ \cdots F_1)^{-1}(H_j^{(i)}).
$$
Note that the hyperplanes $H_j^{(1)} \subseteq \R^{n_0}$, for any $j \in \{1,\ldots,n_1\}$ associated with the first layer, do not bend, we however for the sake of consistency call $H_j^{(1)} \in \mathcal{A}^{(1)}$ the \emph{bent hyperplanes associated with the first layer of $f_\theta$}.
\newline

We will denote by $\mathcal{B}_{f_\theta}^{(i)}\coloneqq \bigcup_{j=1}^{m_i} (F_{i-1} \circ \cdots F_1)^{-1}(H_j^{(i)})$ for $i \in \{2,\ldots,k\}$ or $\mathcal{B}_{f_\theta}^{(1)}\coloneqq \bigcup_{j=1}^{m_1} H_j^{(1)}$ the \emph{bent hyperplane arrangement associated with the $i$th layer of $f_\theta$}. 

We will denote by $\mathcal{B}_{f_\theta}\coloneqq \bigcup_{i=1}^{k}\mathcal{B}_{f_\theta}^{(i)}$ the \emph{bent hyperplane arrangement associated with $f_\theta$}.
\end{definition}

%\kathrynnote{Regarding remark - I think the term activation function has not been defined yet}
\begin{remark}\label{rmk:benthyperplane}
    Since there is no activation function on the last layer map to induce any additional loci of nonlinearity, the last layer will make no contribution to the bent hyperplane arrangement.
\end{remark}

%\kathrynnote{Add citation to Grigsby-Lindsey in def of polyhedral complex.  Do it by adding [ \cite{}] after the begin definition command (no space) - this should make it appear in a parenthesis}
\begin{definition}[ \cite{grigsby2022transversality}]\label{def:polyhedralcomplex}
   Let 
$$
    f_\theta:\R^{n_0} \xrightarrow{F_1\coloneqq \varsigma \circ A_1}\R^{n_1} \xrightarrow{F_2\coloneqq \varsigma \circ A_2} \R^{n_2} \xrightarrow{F_3\coloneqq\varsigma \circ A_3} \cdots \xrightarrow{F_k\coloneqq\varsigma \circ A_k} \R^{n_k} \xrightarrow{A_{k+1}} \R
$$
be a ReLU neural network. For $i \in \{1,2,\ldots,k\}$, denote by $R^{(i)}$ the polyhedral complex in $\R^{n_{i-1}}$ induced by the hyperplane arrangement associated to the $i-th$ layer map $F_i$. Inductively define polyhedral complexes, $\mathcal{C}(F_i \circ \cdots \circ F_1)$ in $\R^{n_0}$ as follows: Set
\begin{itemize}
    \item $\mathcal{C}(F_1)\coloneqq R^{(1)}$, and
    \item $\mathcal{C}(F_i \circ \cdots \circ F_1)\coloneqq \mathcal{C}(F_{i-1} \circ \cdots \circ F_1)_{\in R^{(i)}}$ for $i=2,\ldots,k$.
\end{itemize}
The \emph{canonical polyhedral complex} associated to $f_\theta$ is $\mathcal{C}_{f_\theta}\coloneqq \mathcal{C}(F_k \circ \cdots \circ F_1)$.
\end{definition}

\begin{remark}\label{rmk:levelsetcomplex}
    In Definition \ref{def:polyhedralcomplex}, the polyhedral complex $\mathcal{C}(F_{i-1}\circ \cdots \circ F_1)_{\in R^{(i)}}$ is the \emph{level set complex}, which is defined as 
    $$
    \mathcal{C}(F_{i-1} \circ \cdots \circ F_1)_{\in R^{(i)}}\coloneqq \left \{S \cap (F_{i-1} \circ \cdots \circ F_1)^{-1}(Y) \Big \vert S \in \mathcal{C}(F_{i-1}\circ \cdots \circ F_1),Y \in R^{(i)}\right \}.
    $$
    More details can be found in \cite{grigsby2022transversality}.
\end{remark}

\begin{remark}\label{rmk:unbiased}
    In sections later in this paper, if not specified, we will assume that we are working with \emph{unbiased} ReLU neural networks with \emph{rational weights}. The motivation behind this assumption is that there is relatively nicer and easier-to-understand geometric structure of unbiased RNNs compared with biased ones.
\end{remark}
\section{Background on toric geometry}\label{sec:toric}
In this section, we review the toric geometry background needed for this paper. Many technical details will be omitted, which can be found in \cite{cox2011toric}. 
\subsection{Characters and co-characters}\label{subsec:character}
\begin{definition}\label{def:affinevariet}
   Let $S=\C[x_1,x_2,\cdots,x_n]$. Then $S$ is the \emph{coordinate ring} of an affine $n$-space over $\C$, denoted as $\mathbb{A}^n$. An \emph{affine variety} V is defined to be the vanishing locus of finitely many polynomials in S. More specifically, let $B$ be a set of finitely many polynomials in $S$, then 
    $$
    V=\mathbb{V}(B) \coloneqq \{ p \in \mathbb{A}^n \big \vert f(p)=0 \text{ for any f }\in B\}.
    $$
    %\qilenote{Maybe introducing vanishing locus before definition of variety?}
\end{definition}
\begin{definition}\label{def:torus}
    The affine variety $(\C^*)^n$ is a group under the component-wise multiplication. A \emph{torus} T is an affine variety which is isomorphic to $(\C^*)^n$, where T inherits a group structure from the isomorphism, i.e., T acts on itself by multiplication. Associated to a torus T are its characters and co-characters defined as follows.
\end{definition}
\begin{definition}\label{def:character}
    A \emph{character} of a torus T is a morphism $\chi^m: T\iso (\C^*)^n \map \C^*$ that is a group homomorphism. 
\end{definition}   
\begin{remark}\label{rmk:character}
    For instance, $m=(a_1,a_2,\ldots,a_n) \in \Z^n$ gives a character $\chi^m: (\C^*)^n \map \C^*$ defined by
    $$
    \chi^m(t_1,t_2,\ldots,t_n)=t_1^{a_1}t_2^{a_2}\cdots t_n^{a_n},
    $$
    where $(t_1,t_2,\ldots,t_n) \in T\iso (\C^*)^n$. Indeed, all characters of $(\C^*)^n$ arise this way. Thus the characters of $(\C^*)^n$ form a group isomorphic to $\Z^n$. Furthermore, the group of characters of a torus T is denoted as M and $M \iso \Z^n$.
\end{remark}
\begin{definition}\label{def:cocharacter}
    A \emph{co-character} of a torus T is a morphism $\lambda^u: \C^* \map T\iso (\C^*)^n$ that is a group homomorphism. 
\end{definition}
\begin{remark}\label{rmk:cocharacter}
For instance, $u=(b_1,b_2,\ldots,b_n) \in \Z^n$ gives a co-character $\lambda^u: \C^* \map (\C^*)^n$ defined by
    $$
    \lambda^u(t)=(t^{b_1},t^{b_2},\ldots,t^{b_n}),
    $$
    where $t \in \C^*$. Indeed, all co-characters of a torus $T \iso (\C^*)^n$ arise this way. Thus the co-characters of $(\C^*)^n$ form a group isomorphic to $\Z^n$ and the group of co-characters of a torus T is denoted as $N$ where $N \iso \Z^n$.
\end{remark}
\begin{definition}\label{def:characterpairing}
    There is a natural \emph{bilinear pairing} $\langle,\rangle: M \times N \to \Z$ defined as follows. Given a character $\chi^m$ and a co-character $\lambda^u$, the composition 
    \begin{align*}
        \chi^m \circ \lambda^u &:\C^* \to \C^*\\
        & t \mapsto t^l
    \end{align*}
    is a character of the torus $T \iso \C^*$, for some $l \in\Z$. Then $\langle m,u \rangle =l$.
\end{definition}
In the setting of this paper, we may understand the character, co-character of a torus, and their bilinear pairing in a more concrete way as follows. Fix an integer $n \geq 1$.  The \emph{cocharacter lattice} is  
$$
N \coloneqq \bigoplus_{i=1}^n \mathbb{Z} e_i,
$$
the free $\mathbb{Z}$-module of rank $n$ with standard basis $e_1,\dots,e_n$. The \emph{character lattice} is the dual free $\mathbb{Z}$-module
$$
M \coloneqq \bigoplus_{i=1}^n \mathbb{Z} e_i^\vee
$$
where the dual basis $e_1^\vee,\ldots,e_n^\vee$ satisfies
$$
\langle e_i, e_j^\vee \rangle = \delta_{ij}.
$$
The pairing $\langle\ ,\ \rangle : N \times M \to \mathbb{Z}$ is given by
$$
\left\langle \sum_{i=1}^n a_i e_i, \ \sum_{j=1}^n b_j e_j^\vee \right\rangle
= \sum_{i=1}^n a_i b_i.
$$
Extending scalars to $\mathbb{R}$ gives the real vector spaces
$$
N_{\mathbb{R}} \coloneqq N \otimes_{\mathbb{Z}} \mathbb{R} \ \cong \ \bigoplus_{i=1}^n \mathbb{R} e_i, 
\quad
M_{\mathbb{R}} \coloneqq M \otimes_{\mathbb{Z}} \mathbb{R} \ \cong \ \bigoplus_{i=1}^n \mathbb{R} e_i^\vee,
$$
with the same coordinatewise pairing
$$
\left\langle \sum_{i=1}^n x_i e_i, \ \sum_{j=1}^n y_j e_j^\vee \right\rangle
= \sum_{i=1}^n x_i y_i.
$$  
The \emph{maximal torus} $T_N$ can be understood as $T_N=\text{Hom}_{\text{Grp}}(M,\C^*)\iso (\C^*)^n$. An element $\phi \in T_N$ assigns to each $m \in M$ a nonzero complex number $\phi(m) \in \mathbb{C}^*$ such that 
\[
\phi(m + m') = \phi(m) \cdot \phi(m') \quad\text{for all } m,m' \in M.
\] 
Such a morphism is determined entirely by the $n$ complex numbers 
\[
\alpha_i \coloneqq \phi(e_i^\vee) \in \mathbb{C}^* \quad (1 \leq i \leq n),
\]
since for $m = \sum_{i=1}^n m_i e_i^\vee$ we have 
\[
\phi(m) = \prod_{i=1}^n \alpha_i^{m_i} \in \C.
\] Thus, for each $m \in M$, the \emph{character} $\chi^m$ is the \emph{function}
\[
\chi^m : T_N \longrightarrow \mathbb{C}^*, \quad\chi^m(\phi) = \phi(m).
\]

\subsection{Cones}

%\kathrynnote{The current structure is: i) define cones as sets, ii) define the semigroup definitions, iii) say that actually your cones have a semigroup structure.  I think it would be cleaner/more readable to  define your cones straight away as monoids/affine semigroups (whose underlying sets are....) }

\begin{definition}\label{def:cone}
    A \emph{convex polyhedral cone} in $N_\R$ is a set $\sigma$ of the form
    %\qilenote{note the typo below.}
    $$
    \sigma = \text{Cone}(S) \coloneqq \left \{\sum_{u \in S} \lambda_u u \big \vert \lambda_u \geq 0\right \}\subseteq N_\R,
    $$
    where $S \subseteq N_\R$ is finite. We say that $\sigma$ is generated by $S$. Also, Cone$(\emptyset)=\{0\}$. The \emph{dimension} dim $\sigma$ of a polyhedral cone is the dimension of the smallest subspace $W=$ Span($\sigma$) of $N_\R$ containing $\sigma$. We call Span$(\sigma)$ the span of $\sigma$, which stands for $\R-$linear span.
\end{definition}
\begin{definition}\label{def:dualcone}
    Given a polyhedral cone $\sigma \subseteq N_\R$ (Definition \ref{def:cone}), its \emph{dual cone} is defined by
    $$
    \sigma^\vee\coloneqq \left \{m \in M_\R \big \vert \langle m,u \rangle \geq 0, \forall u \in \sigma\right \}.
    $$
    Furthermore, $\sigma^\vee$ is a polyhedral cone in $M_\R$ and $(\sigma^\vee)^\vee=\sigma$.
\end{definition}
\begin{definition}\label{def:perpendicularcone}
    Let $\sigma\subseteq N_\R$ be a polyhedral cone. We define
    $$
    \sigma^\perp \coloneqq \{m \in M_\R \mid \langle m,u \rangle=0 \text{ for all }u \in \sigma \}.
    $$
\end{definition}

\begin{definition}\label{def:face}
    Given $m \not = 0$ in $M_\R$, we define the hyperplane
    $$
    H_m\coloneqq\{u \in N_\R \big \vert \langle m,u \rangle =0\} \subseteq N_\R.
    $$
    A \emph{face of a cone} $\sigma$ is $\tau=H_m \cap \sigma$ for some $m \in \sigma^\vee$, written $\tau \preceq \sigma$. Faces $\tau \not = \sigma$ are called \emph{proper faces}, written $\tau \prec \sigma$. Note that using $m=0$ shows that $\sigma$ is a face of itself, i.e., $\sigma \preceq \sigma$.
\end{definition}

\begin{definition}\label{def:strongconvexity}
    A polyhedral cone $\sigma$ is \emph{strongly convex} if the origin $\{0\}$ is a face of $\sigma$.
\end{definition}

\begin{remark}\label{rmk:strongconvexity}
    If a polyhedral cone $\sigma \subseteq N_\R$ is strongly convex of maximal dimension, then so is $\sigma^\vee$.
\end{remark}

\begin{definition}\label{def:rationalcone}
    A polyhedral cone $\sigma \subseteq N_\R$ is \emph{rational} if $\sigma=$ Cone$(S)$ for some finite set $S \subseteq N$.
\end{definition}
%\qilenote{its rays instead of edges. rephrase this paragraph.}
\begin{remark}\label{rmk:stronglyconvexrational}
    A strongly convex rational polyhedral cone is generated by the ray generators,  which are the \emph{first lattice points} of the rays, and the ray generators are called \emph{minimal generators}. A ray of a cone is denoted by $\rho$ and the minimal generator of $\rho$ is denoted by $u_\rho$.
\end{remark}

%\kathrynnote{Move this purple font stuff about semigroups up to the first subsection above}
\subsection{Monoids, morphisms, affine semigroups and semigroup algebras}\label{subsec:monoid}

Starting with a \emph{cone}, which is a combinatorial object, we are able to associate it with an algebra structure, called \emph{semigroup algebra}, which will be illustrated in the following definitions.
\begin{definition} \label{def:monoid}
A \emph{monoid} $S$ is a set equipped with a commutative and associative binary operation, denoted as $+$, and an identity element, denoted as $0$. A \emph{submonoid} $S'$ of a monoid $A$ is a subset $S'\subseteq S$ that is closed under the monoid operation $+$ and contains the identity element $0$ of $S$.
\end{definition}

\begin{definition} \label{def:monoindmorphism}
A \emph{monoid morphism} between two monoids $(S,+_S)$ and $(S',+_{S'})$ is a function $f:S \map S'$ such that
\begin{itemize}
    \item $f(x +_S y)= f(x)+_{S'} f(y),\forall x, y \in S$,
    \item $f(0_S)=0_{S'}$
\end{itemize}
We denote the set of all monoid morphism from $S$ to $S'$ as \emph{Hom$_{\text{Mon}}(S,S')$}.
\end{definition}
\begin{remark}\label{rmk:monoidmorphism}
    When $S,S'$ are groups, which are monoids with elements having inverses, we have $\text{Hom}_{\text{Mon}}(S,S')=\text{Hom}_{\text{Grp}}(S,S')$. Hence, for the maximal torus $T_N$, we have
    $$
    T_N \coloneqq \text{Hom}_{\text{Grp}}(M,\C^*) = \text{Hom}_{\text{Mon}}(M,\C) \iso (\C^*)^n
    $$
\end{remark}
\begin{definition} \label{def:generatorofmonoid}
    Let $S$ be a monoid. A subset $\mathcal{A} \subseteq S$ is said to \emph{generate $S$} if 
        $$
        \N\mathcal{A} \coloneqq \left \{\sum_{m \in \mathcal{A}} a_m m \big \vert a_m \in \N\} \subseteq S\right \} =S.
        $$
        Moreover, if $\mathcal{A} \subseteq S$ is a finite set such that $\N \mathcal{A}=S$, then we say the monoid $S$ is \emph{finitely generated}.
\end{definition}

%\qilenote{Are semi-group always commutative? Is the definition standard? Typo in the next sentence.}
\begin{definition}\label{def:affinesemigroup}
    An \emph{affine semigroup} $S$ is a monoid such that
    \begin{itemize}
        \item the monoid is finitely generated, and
        \item the monoid can be embedded in a lattice M as a submonoid.
    \end{itemize}
\end{definition}
%\kathrynnote{Regarding the def below -- most people will be accustomed to thinking of vector spaces as having addition as the operation, so referring to multiplication on a vector space will be a bit of a perspective shift for them.  Maybe insert a comment to make this easier to digest? Explain that the common practice is to think of the vector space operation as multiplication, i.e. view the vector space ``addition" as happening in the exponents?}
\begin{definition} \label{def:semigroupalgebra}

    Given an affine semigroup $S$, the \emph{semigroup algebra} $\C[S]$ is the vector space over $\C$ with $S$ as a basis and multiplication induced by the semigroup structure of $S$.
    That is, $\mathbb{C}[S] = \bigoplus_{m \in S} \C \cdot z^m$ and multiplication is given by $z^m \cdot z^{m'} = z^{m+m'}$.
  \end{definition}

        %\qilenote{What is a character, is it defined/introduced?}
        %\yaoyingnote{I mentioned character and cocharacter in the very beginning of this section without including rigorous definitions.}
         %\qilenote{I added a sentence at the beginning of the section. Is this better?}
         %\kathrynnote{See my comment in the section defining character lattices}
\begin{remark}\label{rmk:semigroupalgebra}
        Let $S\subseteq M$ be an affine semigroup, then
    $$
    \C[S]\coloneqq \left \{\sum_{m \in S}c_m \chi^m \big \vert c_m \in \C \text{ and }c_m=0 \text{ for all but finitely many }m \right\},
    $$
    with multiplication induced by
    $$
    \chi^m \cdot \chi^{m'}=\chi^{m+m'}.
    $$
    If $S=\N \mathcal{A}$ for $\mathcal{A}=\{m_1,\ldots,m_s\}$, then $\C[S]=\C[\chi^{m_1},\ldots,\chi^{m_s}]$.
    \end{remark}
\begin{example}\label{ex:affinealgebra}
     Notice that the character lattice $M \iso \Z^n$, then $M=\N\{\pm e_1^\vee,\ldots,\pm e_n^\vee\}$ as an affine semigroup. Setting $x_i=\chi^{e_i^\vee}$ yields the Laurent polynomial ring
    $$
    \C[M]=\C[x_1^{\pm},\ldots,x_n^{\pm}],
    $$
    which is the coordinate ring of the maximal torus $T_N \iso (\C^*)^n$. The affine semigroup $\N^n\subseteq \Z^n$ gives the polynomial ring
    $$
    \C[\N^n]=\C[x_1,\ldots,x_n].
    $$
\end{example}
\begin{example}\label{ex:affinealgebrarelation}
    Consider the cone $\sigma=$ Cone$(e_1,e_2,e_1+e_3,e_2+e_3)\subseteq N_\R\iso\R^3$. Then $\sigma^\vee=$ Cone$(e_1^\vee,e_2^\vee,e_3^\vee,e_1^\vee+e_2^\vee-e_3^\vee)\subseteq M_\R \iso \R^3$. Here the affine semigroup $S=\N\mathcal{A}$, where $\mathcal{A}=\{e_1^\vee,e_2^\vee,e_3^\vee,e_1^\vee+e_2^\vee-e_3^\vee\}$. Setting $x_1=\chi^{e_1^\vee},x_2=\chi^{e_2^\vee},x_3=\chi^{e_3^\vee},x_4=\chi^{e_1^\vee+e_2^\vee-e_3^\vee}$, we have that
    $$
    \C[S]=\C[x_1,x_2,x_3,x_4]/(x_1x_2-x_3x_4).
    $$
\end{example}

\begin{remark}\label{rmk:algebrarelations}
    In Remark \ref{rmk:semigroupalgebra}, there are no relations indicated in the notation $\C[S]=\C[\chi^{m_1},\cdots,\chi^{m_s}]$. Although there is no relation as shown in Example \ref{ex:affinealgebra}, there can be relations between the variables $\chi^{m_1},\cdots,\chi^{m_s}$, as illustrated by Example \ref{ex:affinealgebrarelation}.
    %\qilenote{This remark is very confusing. Maybe instead an explicit simple example.}
\end{remark}

    %\qilenote{relate this example with the remark on character?}
    %\kathrynnote{If you follow my structure suggestion -- to i) give abstract algebra defs, ii) define cones as semigroups -- then you can just include these  examples after you define cones}
\begin{definition}\label{def:ssigma}
    Given a rational polyhedral cone $\sigma \subseteq N_\R$, we define the affine semigroup associated with $\sigma$ to be
    $$
    S_\sigma\coloneqq \sigma^\vee \cap M \subseteq M.
    $$
    %\qilenote{maybe lattice addition?}
    %\kathrynnote{Perhaps the notation $S_\sigma$ should be in a definition, rather than a proposition? And call it something like ``the semigroup associated to (a rational polyhedral cone) $\sigma$?}
\end{definition}
\begin{remark}\label{rmk:gordanslemma}
    By Gordan's Lemma, $S_\sigma$ defined above is a finitely generated monoid that can be embedded in $M$ as a submonoid. Hence, $S_\sigma$ is an affine semigroup where the group action is lattice addition.
\end{remark}
\subsection{Affine toric variety associated to a cone}\label{subsec:affinecone}
%\kathrynnote{I think we can  the readers know the notation $M$, $N$, etc.   Just say: `` Let $\sigma \subset N_\mathbb{R}$ be a rational polyhedral cone with associated semigroup $S_\sigma$...'' }
After associating \emph{semigroup algebra} to a cone in \S \ref{subsec:monoid}, we now define the affine toric variety associated with a cone. The coordinate ring of this affine toric variety is the semigroup algebra.
\begin{definition}\label{def:affinetoricvariety}
     Let $\sigma \subseteq N \ten_\Z \R=N_\R \iso \R^n$ be a rational polyhedral cone with associated affine semigroup $S_{\sigma}=\sigma^\vee \cap M$. Then
    $$
    U_\sigma\coloneqq \text{Hom}_{\text{Mon}} (S_\sigma, \C)
    $$
    is an \emph{affine toric variety}.
    %\qilenote{Is the last sentence a definition or proposition?}
    %\yaoyingnote{This whole definition is actually a theorem in the book.....}
    %\kathrynnote{Regardless of what the book says, I had the same question, so readers will too. I suggest dividing this into i) the definition (of $U_\sigma$) and ii) stating the result about it.}
\end{definition}
Here in Definition \ref{def:affinetoricvariety}, we first understand $U_\sigma$ as a set of monoid morphisms. For it to make sense as a topological space, we need to specify what topology we are putting on it. Specifically, for any points $p,q \in U_\sigma$, a metric $||p-q||$ can be put as follows:
%\qilenote{Note $0\in S_{\sigma}$, hence the metric is always zero.}
%\yaoyingnote{so can I simply just exclude $0$?? Say taking minimum over $m \in S_\sigma \setminus \{0\}$??}
$$
||p-q|| \coloneqq |\chi^{m_1}(p)-\chi^{m_1}(q)|+|\chi^{m_2}(p)-\chi^{m_2}(q)|+\cdots+|\chi^{m_s}(p)-\chi^{m_s}(q)|,
$$
where $\{m_1,m_2,\ldots,m_s\}$ is the set of generators of $S_\sigma$. We previously showed that characters $\chi^{m_i}$'s are functions in \S \ref{subsec:character}, for all $i \in\{1,\ldots,s\}$. Then $\chi^{m_i}(p)$ and $\chi^{m_i}(q)$ are in $\C$, and $|\chi^{m_i}(p)-\chi^{m_i}(q)|$ is the usual Euclidean metric on $\C$. Indeed, the space Hom$_{\text{Mon}}(S_\sigma,\C)$ with the topology above induced can be understood the same as a complex analytic space with the complex analytic topology.
%\qilenote{not necessarily a manifold as it can be singular. also is this really the metric --- take $m=0$.}

To give a rough idea about why $U_\sigma$ is indeed an affine toric variety, we should specify the torus it contains. Indeed, the torus is implicitly encoded by the condition that the rational polyhedral cone $\sigma$ is contained in $N_\R=N \otimes_\Z \R \iso \R^n$.
\begin{proposition}\label{prop:affinetoricvariety}
    Let $U_\sigma$ be an affine toric variety, then we have:
    $$
    \text{dim }U_\sigma=n \Longleftrightarrow \text{the torus of }U_\sigma \text{ is }T_N \Longleftrightarrow \sigma \text{ is strongly convex}.
    $$
\end{proposition}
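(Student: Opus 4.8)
The plan is to route all three conditions through one statement about the sublattice $\Z S_\sigma \subseteq M$ generated by the semigroup $S_\sigma = \sigma^\vee \cap M$, namely that $\Z S_\sigma = M$, equivalently $\operatorname{rank}\Z S_\sigma = n$.

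\textbf{Step 1: identify the torus of $U_\sigma$.} First I would note that a monoid morphism $\phi \in U_\sigma = \operatorname{Hom}_{\text{Mon}}(S_\sigma,\C)$ takes nonzero values at every element of $S_\sigma$ exactly when it extends — necessarily uniquely — to a group homomorphism $\Z S_\sigma \to \C^*$, since each element of $\Z S_\sigma$ can be written $m - m'$ with $m, m' \in S_\sigma$ and one sets $\tilde\phi(m - m') = \phi(m)/\phi(m')$. This identifies the locus $T_\sigma := \{\phi \in U_\sigma : \phi(m) \neq 0 \text{ for all } m \in S_\sigma\}$ with $\operatorname{Hom}_{\text{Grp}}(\Z S_\sigma, \C^*) \iso (\C^*)^r$, where $r = \operatorname{rank}\Z S_\sigma$. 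Taking from \cite{cox2011toric} that $T_\sigma$ is open and dense in $U_\sigma$ and that its group law extends to an action on $U_\sigma$ — so that $T_\sigma$ is by construction the torus of $U_\sigma$ — one gets at once that $\dim U_\sigma = r = \operatorname{rank}\Z S_\sigma$, and that this torus equals $T_N = \operatorname{Hom}_{\text{Grp}}(M,\C^*)$ precisely when $\Z S_\sigma = M$.

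\textbf{Step 2: rewrite $\Z S_\sigma = M$ as a rank condition.} Since $\sigma^\vee$ is a rational polyhedral cone, its lattice points $S_\sigma$ span it over $\R_{\geq 0}$, hence $\Z S_\sigma$ spans $\operatorname{Span}_\R(\sigma^\vee)$ and $\operatorname{rank}\Z S_\sigma = \dim\sigma^\vee$. Moreover $S_\sigma$ is saturated in $M$: if $km \in \sigma^\vee$ for an integer $k > 0$ then $m \in \sigma^\vee$, as cones are stable under positive rescaling; so $\Z S_\sigma$ is a saturated (i.e.\ $M/\Z S_\sigma$ torsion-free) sublattice of $M$, and a saturated sublattice of full rank is all of $M$. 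Therefore $\Z S_\sigma = M \iff \dim\sigma^\vee = n$, and combining with Step 1 gives $\dim U_\sigma = n \iff \dim\sigma^\vee = n \iff (\text{torus of } U_\sigma) = T_N$.

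\textbf{Step 3: relate $\dim\sigma^\vee = n$ to strong convexity.} Here plain cone duality finishes it. One checks $\sigma \cap (-\sigma) = \{u \in N_\R : \langle m, u \rangle = 0 \text{ for all } m \in \sigma^\vee\}$: indeed $u \in \sigma\cap(-\sigma)$ iff $\langle m, \pm u\rangle \geq 0$ for all $m \in \sigma^\vee$, iff $\langle m,u\rangle = 0$ for all such $m$ (using $(\sigma^\vee)^\vee = \sigma$). As the right-hand side is the annihilator of $\operatorname{Span}_\R(\sigma^\vee)$, we get $\dim(\sigma\cap(-\sigma)) = n - \dim\sigma^\vee$. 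Finally, $\sigma\cap(-\sigma)$ is the minimal face of $\sigma$, so $\{0\}$ is a face of $\sigma$ — i.e.\ $\sigma$ is strongly convex (Definition \ref{def:strongconvexity}) — iff $\sigma\cap(-\sigma) = \{0\}$ iff $\dim\sigma^\vee = n$. Chaining this with Step 2 completes the cycle of equivalences.

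\textbf{Anticipated main obstacle.} Everything past Step 1 is elementary lattice and cone bookkeeping. The one load-bearing input is that $T_\sigma$ genuinely is the torus of $U_\sigma$: that $U_\sigma$ is irreducible (equivalently $\C[S_\sigma]$ is a domain, which holds because $S_\sigma$ sits inside the group $M$), that $T_\sigma$ is dense open in it, and that the multiplication extends to a torus action. I would cite \cite{cox2011toric} for these rather than reprove them, since they are upstream of the statement at hand.
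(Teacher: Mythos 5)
The paper does not prove this proposition at all: it is stated as background, with the details deferred to \cite{cox2011toric}, so there is no in-paper argument to compare against. Your proposal is correct and is essentially the standard proof of that reference result (the torus of $U_\sigma$ has character lattice $\Z S_\sigma$, $\Z S_\sigma = M$ iff $\dim\sigma^\vee = n$, and $\dim\sigma^\vee = n$ iff $\sigma\cap(-\sigma)=\{0\}$ iff $\sigma$ is strongly convex), with the external inputs (density of $T_\sigma$ in $U_\sigma$ and the extension of the torus action) appropriately cited. The only places where you assert rather than argue are routine: the passage from saturation of $S_\sigma$ to saturation of $\Z S_\sigma$ follows in one line from $k(m+b)=a+(k-1)b\in S_\sigma$ when $km=a-b$ with $a,b\in S_\sigma$, and the claim that $\sigma\cap(-\sigma)$ is the minimal face (so that strong convexity means $\sigma\cap(-\sigma)=\{0\}$) uses an interior point of $\sigma^\vee$ in one direction; both are standard and easily supplied.
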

\begin{remark}\label{rmk:affinetoricvariety}
    In a traditional literature of toric variety, in Definition \ref{def:affinetoricvariety}, $U_\sigma$ is defined to be an affine toric variety
    $$
    U_\sigma\coloneqq \text{Spec}(\C[S_\sigma])=\text{Spec}(\C[\sigma^\vee \cap M])
    $$
    with coordinate ring $\C[S_\sigma]$ (Definition \ref{def:semigroupalgebra}).
\end{remark}
\begin{example}\label{ex:cx+-}
    Let $\sigma=\{0\}\subseteq N_\R \iso \R^n$. Then $S_\sigma=\sigma^\vee \cap M=M$. As seen in Example \ref{ex:affinealgebra}, we have the affine toric variety $U_\sigma=T_N\iso (\C^*)^n$ with coordinate ring $\C[M]=\C[x_1^{\pm},\ldots,x_n^{\pm}]$.
\end{example}
\begin{example}\label{ex:a1}
    $$
    \begin{tikzpicture}
    \draw[black,thick] (0,0) -- (2.5,0);
    \filldraw[black] (0,0) circle (2pt) node[anchor=north]{0};
    \filldraw[black] (1,0) circle (2pt) node[anchor=north]{$e_1$};
    \end{tikzpicture}
    $$
    Let $\sigma=$ Cone$(e_1) \subseteq N_\R \iso \R$. Then $\sigma^\vee=$ Cone$(e_1^\vee)\subseteq M_\R \iso \R$. Setting $\chi^{e_1^\vee}=x$, we have $U_\sigma = \A^1_x$ with coordinate ring $\C[x]$.
\end{example}

\subsection{Fans and their associated toric varieties}\label{subsec:fan}
A fan is a combinatorial object which is a collection of cones. We previously associated a cone with an affine toric variety in \S \ref{subsec:affinecone}; our next goal is to glue these affine pieces to get a toric variety corresponding to a fan. 
\begin{definition} \label{def:fan}
    A \emph{fan} $\Sigma$ in $N_\R$ is a finite collection of cones $\sigma$ such that:
    \begin{enumerate}
        \item Every $\sigma \in \Sigma$ is a strongly convex rational polyhedral cone.
        \item For all $\sigma \in \Sigma$, each face of $\sigma$ is also in $\Sigma$.
        \item For all $\sigma_1,\sigma_2 \in \Sigma$, the intersection $\sigma_1 \cap \sigma_2$ is a face of both(hence also in $\Sigma$).
    \end{enumerate}
    Furthermore, if $\Sigma$ is a fan, then: \begin{itemize}
        \item The \emph{support} of $\Sigma$ is $\lvert \Sigma \rvert=\cup_{\sigma \in \Sigma} \sigma \subseteq N_\R$.
        \item $\Sigma(r)$ is the set of $r-$dimensional cones of $\Sigma$.
    \end{itemize}
\end{definition}

\begin{definition}\label{def:wall}
    Let $\Sigma$ be a fan. A cone $\sigma$ is a \emph{wall} if $\sigma \in \Sigma(n-1)$.
\end{definition}
\begin{definition}\label{def:completefan}
    Let $\Sigma$ be a fan. $\Sigma$ is \emph{complete} if 
    $$
    |\Sigma|=\cup_{\sigma \in \Sigma}\sigma=N_\R.
    $$
\end{definition}
\begin{definition}\label{def:generalizedfan} A \emph{generalized fan} $\Sigma$ in $N_\R$ is a finite collection of cones $\sigma \subseteq N_\R$ such that:
\begin{enumerate}
    \item Every $\sigma \in \Sigma$ is a rational polyhedral cone.
    \item For all $\sigma \in \Sigma$, each face of $\sigma$ is also in $\Sigma$.
    \item For all $\sigma_1,\sigma_2 \in \Sigma$, the intersection $\sigma_1 \cap \sigma_2$ is a face of each (hence also in $\Sigma$).%\qilenote{Is (3) still true?} \yaoyingnote{Yesss}
\end{enumerate}   
\end{definition}

%\kathrynnote{Add: Recall (\eqref{eq:latticepointssemigroup} in Proposition (should be Definition) \ref{prop:gordanslemma}) that $S_\sigma$ denotes the set (additive semigroup?) of lattice points in $\sigma^\vee$. Question: what operation does $+$ mean in $S_{\sigma_1} + S_{\sigma_2}$ in proposition below?}
\begin{definition}\label{def:toricevarietyoffan}
    Let $\Sigma$ be a fan in $N_\R$. Then $X_\Sigma$ is the toric variety associated with the fan $\Sigma$.
\end{definition}
To see how we may associate the variety $X_\Sigma$ with a fan $\Sigma$, we need the following Proposition, which is essential for gluing the affine varieties $\{U_\sigma\}_{\sigma \in \Sigma(n)}$ to get $X_\Sigma$. Here $\{U_\sigma\}_{\sigma \in \Sigma(n)}$ is called an \emph{affine open cover} of $X_\Sigma$.
\begin{proposition}(Separation Lemma) \label{prop:separationlemma}
    Let $\Sigma$ be a fan. If $\sigma_1,\sigma_2 \in \Sigma$ and $\tau=\sigma_1 \cap \sigma_2$, then 
    $$
    S_{\tau}=S_{\sigma_1}+S_{\sigma_2}\coloneqq \left \{m_1+m_2 \mid m_1 \in S_{\sigma_1}, m_2 \in S_{\sigma_2} \right \}\subseteq M.
    $$
\end{proposition}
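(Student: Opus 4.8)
The plan is to prove the two inclusions $S_{\sigma_1}+S_{\sigma_2}\subseteq S_\tau$ and $S_\tau \subseteq S_{\sigma_1}+S_{\sigma_2}$ separately, where $\tau=\sigma_1\cap\sigma_2$ is a face of both $\sigma_1$ and $\sigma_2$. The first inclusion is the routine direction: if $m_i\in S_{\sigma_i}=\sigma_i^\vee\cap M$, then since $\tau\subseteq\sigma_i$ we have $\sigma_i^\vee\subseteq\tau^\vee$, so $m_1,m_2\in\tau^\vee\cap M=S_\tau$; as $S_\tau$ is closed under addition, $m_1+m_2\in S_\tau$. No separation hypothesis is needed here.

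For the reverse inclusion, which is the substantive part, I would first use the face structure: since $\tau\preceq\sigma_i$, there exists $m_i\in\sigma_i^\vee$ with $\tau=H_{m_i}\cap\sigma_i=\sigma_i\cap m_i^\perp$. Because $\sigma_1\cap\sigma_2=\tau$ and the $m_i$ are rational, one can arrange $m_1=-m_2$ after clearing denominators — more precisely, the standard fact (which I would invoke or quickly re-derive from the separation property of a fan) is that $\tau^\vee=\sigma_1^\vee+\R_{\geq 0}(-m_0)=\sigma_1^\vee+\R_{\geq 0}(m_0)$ where $m_0\in\sigma_1^\vee\cap(-\sigma_2^\vee)$ cuts out $\tau$ from both cones simultaneously; such an $m_0$ exists precisely because $\tau$ is a common face and can be chosen in $M$. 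Then, given any $m\in S_\tau=\tau^\vee\cap M$, I would write, for $p\in\N$ sufficiently large, $m+p\,m_0\in\sigma_1^\vee\cap M=S_{\sigma_1}$ and $-p\,m_0\in\sigma_2^\vee\cap M=S_{\sigma_2}$, using that $m_0$ pairs nonnegatively on $\sigma_1$ and nonpositively on $\sigma_2$, and that $m$ already pairs nonnegatively on $\tau$ so the perturbation by a large multiple of $m_0$ pushes it into $\sigma_1^\vee$ while staying in $M$ because $m_0\in M$. This exhibits $m=(m+p\,m_0)+(-p\,m_0)$ as an element of $S_{\sigma_1}+S_{\sigma_2}$.

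The main obstacle — the only place where the fan hypothesis is genuinely used — is establishing the existence of a single $m_0\in M$ with $\langle m_0,\cdot\rangle\geq 0$ on $\sigma_1$, $\langle m_0,\cdot\rangle\leq 0$ on $\sigma_2$, and $\sigma_i\cap m_0^\perp=\tau$ for both $i$. I would obtain this from the separating-hyperplane property that holds because $\sigma_1\cap\sigma_2$ is a common face: take $m_1\in\sigma_1^\vee$ and $m_2\in\sigma_2^\vee$ each cutting out $\tau$ from its respective cone, and argue that $m_1-m_2$ (suitably rescaled to lie in $M$) lies in $\sigma_1^\vee\cap(-\sigma_2^\vee)$ and still vanishes exactly on $\tau$ in each cone. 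Verifying that $\sigma_i\cap(m_1-m_2)^\perp=\tau$ uses that $m_1\geq 0$ on $\sigma_1$, $-m_2\geq 0$ on $\sigma_1$ (the latter since $m_2\geq 0$ on $\sigma_2\supseteq\tau$, and one checks $m_2\leq 0$ on $\sigma_1$ via the common-face condition), so their sum vanishes on a point of $\sigma_1$ only if each summand does, forcing the point into $\sigma_1\cap m_1^\perp=\tau$; symmetrically for $\sigma_2$. Once this $m_0$ is in hand, the Archimedean "large multiple" argument of the previous paragraph is routine, and rationality of the cones guarantees $m_0$ can be taken integral so that all perturbations stay in the lattice $M$.
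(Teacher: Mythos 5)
Your two easy steps are fine: the inclusion $S_{\sigma_1}+S_{\sigma_2}\subseteq S_\tau$ is correct as written, and the concluding ``large multiple'' step is also sound once you reduce to the finitely many generators of $\sigma_1$ (for generators in $\tau$ the pairing with $m_0$ vanishes and $m$ is already nonnegative; for the finitely many generators outside $\tau$ the pairing with $m_0$ is strictly positive, so a single large $p$ works). The genuine gap is in your construction of $m_0$, which you yourself identify as the crux. You take arbitrary $m_1\in\sigma_1^\vee$ with $\sigma_1\cap m_1^\perp=\tau$ and $m_2\in\sigma_2^\vee$ with $\sigma_2\cap m_2^\perp=\tau$, and justify $\sigma_i\cap(m_1-m_2)^\perp=\tau$ by the assertion that ``$m_2\le 0$ on $\sigma_1$ via the common-face condition.'' That assertion is false in general, and with it the conclusion. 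Concretely, in $N_\R\iso\R^3$ take $\sigma_1=\mathrm{Cone}(e_1,e_2)$ and $\sigma_2=\mathrm{Cone}(e_1,-e_3)$, so $\tau=\sigma_1\cap\sigma_2=\mathrm{Cone}(e_1)$ is a common face. The choices $m_1=e_2^\vee$ and $m_2=e_2^\vee-e_3^\vee$ are legitimate ($m_2\in\sigma_2^\vee$ and $\sigma_2\cap m_2^\perp=\tau$), yet $\langle m_2,e_2\rangle=1>0$ with $e_2\in\sigma_1$, and $m_0=m_1-m_2=e_3^\vee$ satisfies $\sigma_1\cap m_0^\perp=\sigma_1\neq\tau$. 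Your Archimedean step then fails too: $-e_2^\vee\in S_\tau$ can never be pushed into $\sigma_1^\vee$ by adding multiples of $e_3^\vee$.

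The existence of a valid $m_0$ is true, but it is exactly the content of the Separation Lemma and needs the stronger choice $m_0\in\mathrm{Relint}\bigl(\sigma_1^\vee\cap(-\sigma_2^\vee)\bigr)=\mathrm{Relint}\bigl((\sigma_1-\sigma_2)^\vee\bigr)$, taken in $M$ by rationality. For such $m_0$ one shows $\sigma_1\cap H_{m_0}=\sigma_2\cap H_{m_0}=\tau$ using that a relative interior point of a dual cone cuts out the minimal face $(\sigma_1-\sigma_2)\cap(\sigma_2-\sigma_1)$, together with the hypothesis that $\tau=\sigma_1\cap\sigma_2$ is a face of each cone (if $u\in\sigma_1$ and $u=a_2-a_1$ with $a_i\in\sigma_i$, then $u+a_1\in\sigma_1\cap\sigma_2=\tau$ forces $u\in\tau$ by the face property). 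This is the argument in Cox--Little--Schenck (\cite{cox2011toric}, Separation Lemma and its lattice refinement $S_\tau=S_{\sigma_1}+\N(-m_0)$), and it cannot be shortcut by subtracting two arbitrary cutting functionals. Note that the paper itself states Proposition \ref{prop:separationlemma} as background from \cite{cox2011toric} without proof, so the comparison here is with the standard proof; if you replace your construction of $m_0$ by a citation of (or a correct derivation of) that lemma, the rest of your argument goes through.
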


To have a better idea of how the gluing is exactly happening, we let $\sigma_1,\sigma_2 \in \Sigma(n)$ and let $\tau=\sigma_1\cap \sigma_2$. Note that $
    U_{\sigma_1}=\text{Hom}_{\text{Mon}}(S_{\sigma_1},\C)$, $
    U_{\sigma_2}=\text{Hom}_{\text{Mon}}(S_{\sigma_2},\C)$, and $
    U_{\tau}=\text{Hom}_{\text{Mon}}(S_{\tau},\C)$.
    By Proposition \ref{prop:separationlemma}, we have $S_\tau=S_{\sigma_1}+S_{\sigma_2}$, and hence $U_\tau =U_{\sigma_1} \cap U_{\sigma_2}$. That is, we may glue $U_{\sigma_1}$ and $U_{\sigma_2}$ together along their intersection $U_\tau$. Therefore, we obtain an abstract variety $X_{\Sigma}$ associated to the fan $\Sigma$.

%\kathrynnote{Somewhere, you need to define the toric variety corresponding to a fan -- you use this in e.g. Def. \ref{def:reluvariaty}}

\begin{example}\label{ex:fanofp1} (\textit{Illustration of gluing affine pieces to get $X_\Sigma =\mathbb{P}^1$})
    $$
    \begin{tikzpicture}
    \draw[black,thick] (-2.5,0) -- (2.5,0);
    \filldraw[black] (0,0) circle (2pt) node[anchor=north]{0};
    \filldraw[black] (1,0) circle (2pt) node[anchor=north]{$e_1$};
    \filldraw[black] (-1,0) circle (2pt) node[anchor=north]{$-e_1$};
    \end{tikzpicture}
    $$ 
    Let $\Sigma=\{\{0\}, \sigma_1\coloneqq\text{Cone}(e_1),\sigma_2 \coloneqq\text{Cone}(-e_1)\}$ in $N_\R \iso \R$. As shown in Example \ref{ex:cx+-} and \ref{ex:a1}, setting $\chi^{e_1^\vee}=x$ and hence $\chi^{-e_1^\vee}=x^{-1}$, we have $U_{\{0\}}=T_N\iso \C^*$, $U_{\sigma_1}=\A^1_x$ and $U_{\sigma_2}=\A^1_{x^{-1}}$. Note that $\{0\}=\sigma_1 \cap \sigma_2$, we have $S_{\{0\}}=M=\Z=\N\{e_1^\vee\}+\N\{-e_1^\vee\}=S_{\sigma_1}+S_{\sigma_2}$, and $$U_{\{0\}}=\C^*_{x^\pm} = \A^1_x \cap \A^1_{x^{-1}}=U_{\sigma_1} \cap U_{\sigma_2}.$$ Hence, we glue $U_{\sigma_1}$ and $U_{\sigma_2}$ along $U_{\{0\}}$ to get $X_\Sigma$, which can be equivalently understood from topological perspective as gluing the two charts $\A^1_x$ and $\A^1_{x^{-1}}$ along $\C^*$ to get $\mathbb{P}^1$.
\end{example}

\subsection{Cartier divisors on toric varieties}\label{subsec:cartier}
We illustrated how to associate the toric variety with a fan in \S \ref{subsec:fan}. To study these varieties, we study the functions on these varieties. %\qilenote{Make the following sentence shorter?}
The output function of a ReLU neural network is a finitely-piecewise linear function. We will therefore introduce Cartier divisors on toric varieties, which can be understood as the output piecewise linear function of a given feedforward ReLU neural network, up to a shift by a linear term.\\

 Let $X_\Sigma$ be the toric variety of a fan $\Sigma$ in $N_\R \iso \R^n$. Let $T_N \subseteq X_\Sigma$ be the maximal torus. Then $X_\Sigma \setminus T_N$ is of codimension one inside $X_\Sigma$. This locus $X_\Sigma \setminus T_N$ consists of codimension one irreducible components in one-to-one correspondence with rays in $\Sigma$, denoted as $D_\rho$'s. That is, $X_\Sigma \setminus T_N=\cup_{\rho \in \Sigma(1)} D_\rho$. More explicitly, $D_\rho$ is defined as follows.
%\qilenote{The following definition doesn't really describe what $D_{\rho}$ is. Alternatively, I wonder if can be described in the following way. Let $T \subset X_{\Sigma}$ be the maximal torus. Then $X_{\Sigma}\setminus T$ are of codimension one inside $X_{\Sigma}$. This locus $X_{\Sigma}\setminus T = \cup D_{\rho}$ consists of codimension one irreducible components one-to-one corresponding to the rays in $\Sigma$. This correspondence can be described using the distinguished point associated to each ray as follows. Let $1_{\rho}$ be the distinguished point associated to $\rho$. Then $D_{\rho}$ is the unique component containing $1_{\rho}$. In the example below, you may describe what this really means. } 
\begin{definition}\label{def:primedivisor}
    Consider $\rho \in \Sigma(1)$, and suppose $\rho \subseteq \sigma$ for some $\sigma \in \Sigma$. We obtain a \emph{$T_N-$invariant prime divisor} $D_\rho$, defined as the closure of 
    $$
    O(\rho) \coloneqq 
\left\{\, \gamma \in \operatorname{Hom}_{\mathrm{Mon}}(S_\sigma,\mathbb{C}) \;\middle|\;
\gamma(m) \not = 0 \Leftrightarrow m \in \rho^\perp \cap M
\right\},
    $$ i.e., $D_\rho \coloneqq \overline{O(\rho)}$.
\end{definition}

\begin{definition} \label{def:weildivisor}
    A \emph{$T_N-$invariant Weil divisor} $D$ is a finite sum
    $$
    D=\sum_{\rho \in \Sigma(1)} a_\rho D_\rho
    $$
    where $a_\rho \in \Z$ and $D_\rho$'s are $T_N$-invariant prime divisors.
\end{definition}

\begin{remark}\label{rmk:TNinvariant}
    Throughout this paper, we restrict our attention to $T_N-$invariant divisors on $X_\Sigma$. For simplicity, we shall refer to them as divisors. 
\end{remark}
\begin{example}\label{ex:weildivisor}
%\qilenote{Maybe simplify this remark using something like: The fan $\Sigma$ in Example \ref{ex:fanofp1} has two rays $\rho_1=$ Cone$(e_1)$ and $\rho_2=$ Cone$(-e_1)$. In this case, $X_{\Sigma}\setminus T = \mathbb{P}^1 \setminus \mathbb{C}^*$ consists of two points, which are $D_{\rho_1}=\{\chi^{e_1^\vee}=x=0\}$ and $D_{\rho_2}=\{\chi^{-e_1^\vee}=x^{-1}=0\}$.}
    The fan $\Sigma$ in Example \ref{ex:fanofp1} has two rays $\rho_1=$ Cone$(e_1)$ and $\rho_2=$ Cone$(-e_1)$. In this case, $X_{\Sigma}\setminus T_N = \mathbb{P}^1 \setminus \mathbb{C}^*$ consists of two points, which are $D_{\rho_1}=\{\chi^{e_1^\vee}=x=0\}$ and $D_{\rho_2}=\{\chi^{-e_1^\vee}=x^{-1}=0\}$.
\end{example}

\begin{remark}\label{rmk:weildivisor}
    Although a Weil divisor $D$ is of codimension one, it is not necessarily locally cut out by one equation due to the possibility of existence of singularities. When a Weil divisor $D$ is locally cut out by one equation, it is called Cartier which is defined as follows.
\end{remark}
%\begin{remark}\label{rmk:divisorintoric} In the toric variety setting, let $X_\Sigma$ be the toric variety of a fan $\Sigma$ in $N_\R \iso \R^n$. Then we see that a one-dimensional ray $\rho \in \Sigma(1)$ gives a codimension one locus, i.e., a prime divisor $D_\rho$ in $X_\Sigma$. Furhter, a Weil divisor is a formal sum $D=\sum_{\rho \in \Sigma(1)} a_\rho D_\rho$. \end{remark}

%\begin{definition} \label{def:sheafunits} Let $X$ be an irreducible variety and $\{U_i\}$ an affine open cover of $X$. Then for each affine open $U_i$, the \textbf{coordinate ring} of $U_i$ is denoted as $\mathcal{O}_X(U_i)$ and the \textbf{function field} is denoted as $K(U_i)$. Then $K^\times(U_i)$ and $\mathcal{O}_X^\times(U_i)$ correspond to the units in $K(U_i)$ and $\mathcal{O}_X(U_i)$ respectively. \end{definition}

%\begin{remark}\label{rmk:sheafunits} Let $U_i=$ Spec $A_i$, where $A_i$ is a ring. Then $\mathcal{O}_X(U_i)=A_i$ and $\mathcal{O}_X^\times(U_i)$ consists of multiplicative units of $A_i$. Additionally, let $S\subseteq A_i$ be the set of non zero-divisors. Then $K(U_i)=S^{-1}A_i$ and $K^\times(U_i)$ consists of multiplicative units of $S^{-1}A_i$. For instance, if $A_i =\C[x]$, then $\mathcal{O}_X(U_i)^\times=\C$, $K(U_i)=\C(x)=\{\frac{f}{g}\big \vert f,g \in \C[x]\}$ and $K^\times(U_i)=\C(x)$ since $\C(x)$ is a field. \end{remark}

\begin{definition} \label{def:cartierdivisor}
    A Weil divisor $D$ is \emph{Cartier} if for an affine open cover $\{U_\sigma\}_{\sigma \in \Sigma(n)}$ of $X_\Sigma$, $D$ is of the form $\{(U_\sigma,\chi^{-m_\sigma})\}$ for some $m_\sigma \in M$ such that $\langle m_\sigma, u_\rho \rangle=-a_\rho$ for all $\rho \in \sigma(1)$, where $u_\rho$ is the minimal generator of the ray $\rho$. And $\{m_\sigma\}$ is called the \emph{Cartier data}. 
\end{definition}

\begin{example}\label{ex:cartierexample}
    Continuing with Example \ref{ex:weildivisor}, the two Weil divisors $D_{\rho_1}$ and $D_{\rho_2}$ are both Cartier. The Cartier data for $D_{\rho_1}$ is $\{m_{\sigma_1}=-e_1^\vee,m_{\sigma_2}=0\}$ and the Cartier data for $D_{\rho_2}$ is $\{m_{\sigma_1}=0,m_{\sigma_2}=e_1^\vee\}$.
\end{example}
%\begin{remark}\label{rmk:toriccartierdivisor} In the toric variety $X_\Sigma$ of a fan $\Sigma$ in $N_\R \iso \R^n$ and let $D=\sum_{\rho} a_\rho D_\rho$ be a Cartier divisor, then $\{U_\sigma\}_{\sigma \in \Sigma(n)}$ is an open affine cover of $X_\Sigma$, the Cartier data is given by $\{m_\sigma\}$, for each $\sigma \in \Sigma(n)$ with $m_\sigma \in M$ and  \end{remark}

\begin{definition} \label{def:supportfunction}
    A \emph{support function} is a function $\varphi: |\Sigma| \map \R$ that is linear on each cone of $\Sigma$. A support function $\varphi$ is integral with respect to the lattice $N$ if $\varphi(|\Sigma| \cap N) \subseteq \Z$.
\end{definition}

\begin{remark}\label{rmk:supportfunction}
    Given $D=\sum_\rho a_\rho D_\rho$ with Cartier data $\{m_\sigma\}_{\sigma \in \Sigma}$, the funcion 
\begin{align*}
    \varphi_D: &|\Sigma| \map \R \\
    & u \mapsto \varphi_D(u)=\langle m_\sigma,u\rangle \text{ when }u \in \sigma
\end{align*}
is a well-defined support function that is integral with repsect to $N$.
\end{remark}

\begin{definition} \label{def:linebundle}
    Let $D$ be a Cartier divisor represented by $\{U_\sigma,m_\sigma\}$ on the toric variety $X_\Sigma$. We associate a \emph{line bundle} $\mathcal{O}_{X_\Sigma}(D)$ to $D$ which is defined to be a functor mapping open affines $U_\sigma$ to $\mathcal{O}_{X_\Sigma}(D)(U_\sigma)$ where
    $$
    \mathcal{O}_{X_\Sigma}(D)(U_\sigma)\coloneqq \C[S_\sigma] \cdot \chi^{m_\sigma}.
    $$
\end{definition}

%\begin{remark}\label{rmk:section} A rational function $f \in \Gamma(U_\sigma,\mathcal{O}_{X_\Sigma}(D))$ is called a section, and a rational function $f \in \Gamma(X_\Sigma,\mathcal{O}_{X_\Sigma}(D))$ is called a global section.\qilenote{Is the notation $\Gamma$ for global sections defined?} \end{remark}

\begin{definition}\label{def:polytopeofdivisor}
    (Polyhedron of a divisor) Let $D$ be a Cartier divisor of $X_\Sigma$. Define $P_D \subseteq M_\R$ as follows
    $$
    P_D\coloneqq \left \{m \in M_\R | \langle m,u_\rho \rangle \geq -a_\rho \text{ for all }\rho \in \Sigma(1)\right \}.
    $$
\end{definition}

\begin{proposition}\label{prop:polytopeofdivisor}
    The polyhedron $P_D$ (Definition \ref{def:polytopeofdivisor}) gives the global sections of the line bundle $\mathcal{O}_{X_\Sigma}(D)$. That is, 
    $$
    \Gamma(X_\Sigma,\mathcal{O}_{X_\Sigma}(D))=\bigoplus_{m \in P_D \cap M} \C \cdot \chi^m.
    $$ A rational function $f \in \Gamma(X_\Sigma,\mathcal{O}_{X_\Sigma}(D))$ is called a \emph{global section} of $\mathcal{O}_{X_\Sigma}(D)$.
\end{proposition}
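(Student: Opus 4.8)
The plan is to identify a global section of $\mathcal{O}_{X_\Sigma}(D)$ with a Laurent polynomial on the torus $T_N$ that restricts to an honest section on each affine chart, and then to translate the chart-by-chart integrality conditions into the ray inequalities defining $P_D$.

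First I would set up the restriction to the torus. Each maximal cone $\sigma \in \Sigma(n)$ is strongly convex, so $\{0\}$ is a face of $\sigma$ and $T_N$ is an open dense subvariety of $U_\sigma$; moreover $D$ restricts to $0$ on $T_N$, so $\mathcal{O}_{X_\Sigma}(D)$ is canonically trivial there with $\mathcal{O}_{X_\Sigma}(D)(T_N) = \C[M] = \bigoplus_{m\in M}\C\cdot\chi^m$. Because $T_N$ is dense in each $U_\sigma$, the restriction map $\mathcal{O}_{X_\Sigma}(D)(U_\sigma) \hookrightarrow \C[M]$ is injective, and by Definition \ref{def:linebundle} its image is $\chi^{m_\sigma}\cdot\C[S_\sigma]$, where $S_\sigma = \sigma^\vee\cap M$ and $\{m_\sigma\}$ is the Cartier data of $D$ (Definition \ref{def:cartierdivisor}). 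By the sheaf axiom, a global section $s\in\Gamma(X_\Sigma,\mathcal{O}_{X_\Sigma}(D))$ is the same datum as a Laurent polynomial $f = \sum_m c_m\chi^m \in \C[M]$ satisfying $f\in\chi^{m_\sigma}\cdot\C[S_\sigma]$ for every $\sigma\in\Sigma(n)$; equivalently $\chi^{m - m_\sigma}\in\C[S_\sigma]$ for every $m$ with $c_m\neq 0$ and every $\sigma\in\Sigma(n)$. In particular every global section is automatically a finite sum, since it lies in $\C[M]$.

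Next I would unwind the condition $\chi^{m-m_\sigma}\in\C[S_\sigma] = \bigoplus_{m'\in\sigma^\vee\cap M}\C\cdot\chi^{m'}$, which says precisely that $m - m_\sigma\in\sigma^\vee$. Since $\sigma$ is a strongly convex rational polyhedral cone, it is generated by its ray generators $u_\rho$, $\rho\in\sigma(1)$ (Remark \ref{rmk:stronglyconvexrational}), so $m - m_\sigma\in\sigma^\vee$ holds if and only if $\langle m - m_\sigma,u_\rho\rangle\geq 0$ for all $\rho\in\sigma(1)$; using the defining property $\langle m_\sigma,u_\rho\rangle = -a_\rho$ of the Cartier data, this reads $\langle m,u_\rho\rangle\geq -a_\rho$ for all $\rho\in\sigma(1)$. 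Letting $\sigma$ range over $\Sigma(n)$ and using that every ray of $\Sigma$ lies in some $n$-dimensional cone, so $\Sigma(1) = \bigcup_{\sigma\in\Sigma(n)}\sigma(1)$, a monomial $\chi^m$ is a global section exactly when $\langle m,u_\rho\rangle\geq -a_\rho$ for all $\rho\in\Sigma(1)$, i.e.\ $m\in P_D$ (Definition \ref{def:polytopeofdivisor}). Since these conditions are imposed monomial-by-monomial, $f$ is a global section if and only if every monomial occurring in it has exponent in $P_D\cap M$, which yields $\Gamma(X_\Sigma,\mathcal{O}_{X_\Sigma}(D)) = \bigoplus_{m\in P_D\cap M}\C\cdot\chi^m$.

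The routine verifications are: that the local trivializations glue (the Cartier data makes $\chi^{m_\sigma - m_{\sigma'}}$ invertible on overlaps $U_\sigma\cap U_{\sigma'}$, which is built into Definition \ref{def:cartierdivisor}), and the combinatorial identity $\Sigma(1)=\bigcup_{\sigma\in\Sigma(n)}\sigma(1)$. The one genuinely substantive step, and the place where the polyhedral hypotheses are essential, is the reduction of ``$m - m_\sigma\in\sigma^\vee$'' — an infinite family of inequalities over all $u\in\sigma$ — to the finitely many ray inequalities $\langle m,u_\rho\rangle\geq -a_\rho$; this is precisely what lets the single polyhedron $P_D$, defined purely from the rays of $\Sigma$, capture the sections over all charts simultaneously. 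Everything else is bookkeeping with the semigroup-algebra dictionary of \S\ref{subsec:monoid}.
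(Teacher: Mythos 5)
Your argument is correct: it is the standard proof of this fact (as in Cox--Little--Schenck), which the paper itself does not reprove but cites from \cite{cox2011toric}. The only conventions you rely on beyond the paper's definitions --- that every ray of $\Sigma$ lies in some $\sigma\in\Sigma(n)$, so that $\Sigma(1)=\bigcup_{\sigma\in\Sigma(n)}\sigma(1)$ and $\{U_\sigma\}_{\sigma\in\Sigma(n)}$ is genuinely a cover --- are exactly what the paper's Definition~\ref{def:cartierdivisor} already presupposes and hold for the complete ReLU fans considered here, so no gap results.
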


\begin{remark}\label{rmk:polytopeofdivisor}
    If $\Sigma$ is complete, then $P_D$ is a polytope. Since in our study, the fan $\Sigma$ is always complete, we may always assume that $P_D$ is a polytope throughout this paper.
\end{remark}

\begin{remark}\label{rmk:qcartierdivisor}
    As mentioned in the beginning of \S \ref{subsec:cartier}, the output piecewise linear function of a ReLU neural network can be roughly understood as a Cartier divisor. Since we allow rational coefficients for ReLU neural networks, we also allow the coefficients $a_\rho$'s of a divisor to be rational. A divisor $D=\sum_{\rho\in \Sigma(1)} a_\rho D_\rho$ is called a \emph{$\Q-$Cartier divisor} if $a_\rho \in \Q$ and $mD$ is a Cartier divisor for some positive integer $m$.
\end{remark}

%\qilenote{The relations between piecewise linear functions with integral coefficients and Cartier divisors could be explicit emphasized. I think this is a key point of this paper.}
%\yaoyingnote{Yes, I agree. The relationship is explicitly explained in Section of "Toric geometry of ReLU neural networks". Do I have to do it here as well?}
\subsection{Properties of Cartier divisors on toric varieties.}

We previously reinterpreted a piecewise linear function as the support function of a Cartier divisor in \S \ref{subsec:cartier}. We now investigate properties of such functions and introduce techniques we will use to study those functions.

\begin{definition}\label{def:basepointfree}
    If $\varphi_D$ is convex, i.e., $\varphi_D$ is \emph{convex} if
    $$
    \varphi_D\bigl(tu+(1-t)v\bigr) \geq t\varphi_D(u)+(1-t)\varphi_D(v),\forall u,v \in N_\R, t \in [0,1]
    $$
    then the Cartier divisor $D$ is \emph{basepoint free}. Furthermore, 
    $$
    P_D=\text{ Conv}(m_\sigma \vert \sigma \in \Sigma(n)).
    $$ 
\end{definition}

\begin{definition}\label{def:ample}
    If $\varphi_D$ is \emph{strictly convex}, i.e., it is convex and $m_\sigma \not = m_{\sigma'}$ when $\sigma \not = \sigma'$ in $\Sigma(n)$, then $D$ is \emph{ample}. Here for strict convexity, we may think of it as ``the function $\varphi_D$ bend as much as possible". Furthermore, 
    $$
    P_D=\text{ Conv}(m_\sigma \vert \sigma \in \Sigma(n))
    $$
    is a polytope of full dimension, i.e., dim $P_D=n$.
\end{definition}

\begin{remark}\label{rmk:convexity}
    Note that the convexity defined in Definition \ref{def:basepointfree} is precisely the opposite of the convexity defined in Definition \ref{def:convexfunction}. That is, if $\varphi_D$ is convex in the sense of Definition \ref{def:basepointfree}, then $\varphi_D$ is concave in the sense of Definition \ref{def:convexfunction}.
\end{remark}

\begin{example}\label{ex:strictlyconvexandample}
    Continuing with Example \ref{ex:cartierexample}, the two Cartier divisors $D_{\rho_1}$ and $D_{\rho_2}$ of $\mathbb{P}^1$ are both ample, i.e., the two support functions $\varphi_{D_1}$ and $\varphi_{D_2}$ are both strictly convex (Definition \ref{def:ample}).
\end{example}
\begin{definition} \label{def:torus invariant curve}
    Let $\Sigma$ be a fan in $N_\R\iso \R^n$. A curve $C$ is \emph{torus-invariant and complete} if $C=V(\tau) \iso \mathbb{P}^1$ where $\tau=\sigma \cap \sigma' \in \Sigma(n-1),$ and $\sigma,\sigma'\in \Sigma(n)$.
\end{definition}

\begin{remark}\label{rmk:torusinvariantcurve}
    When $\tau=\sigma \cap \sigma'\in \Sigma(n-1)$ and $\sigma,\sigma' \in \Sigma(n)$, $\tau$ is a wall (Definition \ref{def:wall}). To see why the toric variety $V(\tau) \iso \mathbb{P}^1$, the co-character lattice of $V(\tau)$ is $N(\tau) \coloneqq N/N_{\tau}$, where $N_{\tau}=$ Span$(\tau) \cap N$. Since $\tau$ is a wall, we have the torus $N(\tau) \iso \Z$. Furthermore, $\bar{\sigma},\bar{\sigma'}$ are rays that correspond to the rays in the usual fan for $V(\tau) \iso \mathbb{P}^1$, where $\bar{\sigma}\coloneqq \sigma/\tau, \bar{\sigma'}\coloneqq \sigma'/ \tau$.
\end{remark}

\begin{definition} \label{def:Cartierdivisorintersectcurve}
    (\textit{Intersection number}) Let $C=V(\tau)$ be the complete torus-invariant curve in $X_\Sigma$ coming from the wall $\tau=\sigma \cap \sigma'$. Let $D$ be a Cartier divisor with Cartier data $m_\sigma, m_{\sigma'} \in M$ corresponding to $\sigma,\sigma' \in \Sigma(n)$. Also pick $u \in \sigma' \cap N$ that maps to the minimal generator of $\bar{\sigma'}:= \sigma'/ \tau \subseteq N(\tau)_\R$. Then 
    $$
    D \cdot C \coloneqq \langle m_\sigma - m_{\sigma'}, u \rangle \in \Z.
    $$
\end{definition}

\begin{remark}\label{rmk:intersectionnumber}
    More intuitively, we may interpret the intersection number as the extent to which the support function $\varphi_D$ bends along the wall $\tau=\sigma \cap \sigma'$.
\end{remark}

\begin{remark}\label{rmk:QCartierintersection}
    If $D$ is a $\Q-$Cartier divisor, i.e., $lD$ is Cartier for some integer $l>0$. Given a complete torus-invariant curve C, then 
    $$
    D \cdot C \coloneqq \frac{1}{l}(lD) \cdot C \in \Q.
    $$
    %\qilenote{What is $D$, a Weil divisor?}
    %\yaoyingnote{In this remark, the first sentence "if $D$ is a $\Q-$Cartier divisor"...}
\end{remark}

\begin{proposition} \label{prop:intersectionnumber}(Linearity of intersection number) Let $C$ be an irreducible complete curve and $D,E$ Cartier divisors on the toric variety $X_\Sigma$ corresponding to a fan $\Sigma$. Then
$$
(D+E) \cdot C=D\cdot C+E\cdot C.
$$    
\end{proposition}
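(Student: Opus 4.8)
The plan is to reduce everything to the explicit formula of Definition \ref{def:Cartierdivisorintersectcurve} and then invoke bilinearity of the pairing $\langle\,,\,\rangle$. Since the intersection number of a Cartier divisor with a complete curve depends only on the numerical class of the curve, and on a complete toric variety $X_\Sigma$ every such class is a combination of classes of complete torus-invariant curves $V(\tau)$ with $\tau \in \Sigma(n-1)$, it suffices to treat the case $C = V(\tau)$ where $\tau = \sigma \cap \sigma'$ and $\sigma,\sigma' \in \Sigma(n)$. (Alternatively, one may simply read the proposition with $C$ already torus-invariant, which is the only case invoked elsewhere in the paper.)

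First I would record that if $D = \sum_\rho a_\rho D_\rho$ has Cartier data $\{m_\sigma\}_{\sigma \in \Sigma(n)}$ and $E = \sum_\rho a'_\rho D_\rho$ has Cartier data $\{m'_\sigma\}_{\sigma \in \Sigma(n)}$, then $D + E = \sum_\rho (a_\rho + a'_\rho) D_\rho$ is again Cartier, with Cartier data $\{m_\sigma + m'_\sigma\}_{\sigma \in \Sigma(n)}$. Indeed, for every ray $\rho \in \sigma(1)$ we have $\langle m_\sigma + m'_\sigma, u_\rho \rangle = \langle m_\sigma, u_\rho \rangle + \langle m'_\sigma, u_\rho \rangle = -a_\rho - a'_\rho = -(a_\rho + a'_\rho)$, which is exactly the condition in Definition \ref{def:cartierdivisor}; and since each $\sigma \in \Sigma(n)$ is a strongly convex cone of full dimension, $\sigma^\perp \cap M = \{0\}$, so $m_\sigma + m'_\sigma$ is the unique admissible choice on $U_\sigma$ and the data automatically agrees on overlaps. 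This is the one point where a line of care is genuinely needed — making sure that "adding Cartier data cone by cone" is well defined — and it is where I would be most careful.

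With this in hand the computation is immediate. Choose $u \in \sigma' \cap N$ mapping to the minimal generator of $\bar{\sigma'} = \sigma'/\tau \subseteq N(\tau)_\R$; this $u$ depends only on the fan data $(\tau,\sigma')$ and not on the divisor, so the same $u$ serves for $D$, $E$, and $D+E$. By Definition \ref{def:Cartierdivisorintersectcurve} and bilinearity of the pairing,
\begin{align*}
(D+E)\cdot C &= \langle (m_\sigma + m'_\sigma) - (m_{\sigma'} + m'_{\sigma'}),\, u \rangle \\
&= \langle (m_\sigma - m_{\sigma'}) + (m'_\sigma - m'_{\sigma'}),\, u \rangle \\
&= \langle m_\sigma - m_{\sigma'},\, u \rangle + \langle m'_\sigma - m'_{\sigma'},\, u \rangle \\
&= D\cdot C + E\cdot C.
\end{align*}

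The argument is essentially formal, so there is no serious obstacle; the only substantive step is the well-definedness of the Cartier data of the sum discussed above. If one wants the statement for $\mathbb{Q}$-Cartier divisors as well, it follows by clearing denominators together with Remark \ref{rmk:QCartierintersection}. A conceptually cleaner but less self-contained alternative would be to identify $D\cdot C$ with $\deg\bigl(\mathcal{O}_{X_\Sigma}(D)|_C\bigr)$ and use $\mathcal{O}_{X_\Sigma}(D+E) \cong \mathcal{O}_{X_\Sigma}(D)\otimes\mathcal{O}_{X_\Sigma}(E)$ with additivity of the degree of line bundles on a complete curve.
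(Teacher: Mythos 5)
Your argument is correct, but note that the paper itself never proves this proposition: it appears in the background section \S\ref{sec:toric}, where technical details are deliberately omitted and deferred to \cite{cox2011toric}. Measured against the standard treatment there, your direct computation is the right one for the setting the paper actually uses: the key point, which you correctly isolate, is that $D+E$ is Cartier with Cartier data $\{m_\sigma+m'_\sigma\}$, where uniqueness on each $U_\sigma$ follows from $\sigma\in\Sigma(n)$ being full-dimensional (so $\sigma^\perp\cap M=\{0\}$); after that, bilinearity of $\langle\,,\,\rangle$ with the same $u$ for all three divisors finishes it. One caveat: the proposition is stated for an arbitrary irreducible complete curve $C$, whereas Definition \ref{def:Cartierdivisorintersectcurve} only defines $D\cdot C$ for $C=V(\tau)$ with $\tau$ a wall. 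Your reduction via numerical classes and generation of the curve classes by torus-invariant curves imports machinery (the toric cone theorem, and implicitly the definition $D\cdot C=\deg\bigl(\mathcal{O}_{X_\Sigma}(D)|_C\bigr)$ for general $C$) that the paper never sets up; your parenthetical fallback of reading $C$ as torus-invariant is the honest fix, and it covers every use of the proposition in the paper (the proof of Theorem \ref{thm:onehiddenlayer} only intersects with walls). The line-bundle-degree alternative you sketch at the end is in fact the route the standard reference takes for the general statement, so either version of your argument is acceptable; the wall computation has the advantage of staying entirely within the definitions the paper provides.
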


\begin{definition}\label{def:volumeoflinebundle}
    Let $X_\Sigma$ be the toric variety of a fan $\Sigma$ and a Cartier divisor $D$. Then the \emph{volume} of the line bundle $\mathcal{O}_{X_\Sigma}(D)$ is defined to be the nonnegative real number
    $$
    \text{Vol}(\mathcal{O}_{X_\Sigma}(D))\coloneqq \limsup\limits_{m\rightarrow \infty} \frac{h^0(X_\Sigma,\mathcal{O}_{X_\Sigma}(mD))}{m^n / n!}
    $$
    where $h^0(X_\Sigma,\mathcal{O}_{X_\Sigma}(mD))$ is the dimension of the global sections of the line bundle $\mathcal{O}_{X_\Sigma}(mD)$.
\end{definition}

\section{Toric geometry of ReLU Neural Networks}\label{sec:ReLUtoric}
Having established the necessary background, we now develop a dictionary between feedforward ReLU networks and toric geometry.
\begin{definition}\label{def:relufan}
    Let $f\in \operatorname{ReLU}_{\vect{0}}^\Q(n_0,k)$ and suppose $f_\theta\in \mathcal{F}_{0}^\Q$ realizes $f$:
$$
f_\theta:\R^{n_0} \xrightarrow{\varsigma \circ L_1}\R^{n_1} \xrightarrow{\varsigma \circ L_2} \R^{n_2} \xrightarrow{\varsigma \circ L_3} \cdots \xrightarrow{\varsigma \circ L_k} \R^{n_k} \xrightarrow{L_{k+1}} \R,
$$
denote by $\Sigma_{f_\theta}$ the \emph{ReLU fan} associated with this ReLU neural network is defined to be the canonical polyhedral complex $\mathcal{C}_{f_\theta}$ associated to $f_\theta$ (Definition  \ref{def:polyhedralcomplex}). In another word, $\Sigma_{f_\theta}\coloneqq \mathcal{C}_{f_\theta}$.
\end{definition}

\begin{remark}\label{rmk:relufan}
    Note that $\Sigma_{f_\theta}$ defined above is in general a \emph{generalized fan} (Definition \ref{def:generalizedfan}). However, in a real world application, we may assume that $n_1 > n_0$. Thus, in the rest of this paper, we will assume that $\Sigma_{f_\theta}$ is a \emph{fan} (Definition \ref{def:fan}) unless otherwise specified.
\end{remark}
\begin{proposition}\label{prop:toricecondingmap}
    With Definition \ref{def:relufan} and Remark \ref{rmk:relufan}, fixing an architecture $(n_0,\ldots,n_k;1)$ of unbiased ReLU neural networks with rational weights, we get the \emph{toric encoding map}
    $$
    \mathcal{T}: \mathcal{P} \to \left\{\text{ReLU fans} \right\}
    $$ with
    $$
    \theta \mapsto \Sigma_{f_\theta}.
    $$
\end{proposition}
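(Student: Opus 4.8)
The statement is essentially that the assignment $\theta\mapsto\Sigma_{f_\theta}$ is a well-defined function whose values really are ReLU fans; since $\mathcal{C}_{f_\theta}$ is attached to $f_\theta$ (hence to $\theta$) by Definition \ref{def:polyhedralcomplex}, the only thing to check is that, for every $\theta$ in the parameter space of the fixed unbiased rational-weight architecture, the complex $\Sigma_{f_\theta}=\mathcal{C}_{f_\theta}$ has the structure of a (generalized) rational fan in $N_\R\iso\R^{n_0}$. The plan is to extract this from two inputs: the fact, established in \cite{grigsby2022transversality}, that $\mathcal{C}_{f_\theta}$ is a finite polyhedral complex (so it is closed under taking faces and the intersection of two cells is a face of each), together with the observation --- which is where the hypotheses of the proposition enter --- that each cell is a \emph{rational polyhedral cone}.

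First I would note that, because the network is unbiased, every layer map $L_i$ is linear and the coordinatewise ReLU $\varsigma$ is positively homogeneous of degree one; hence each $F_i=\varsigma\circ L_i$ and every composition $F_i\circ\cdots\circ F_1$ is positively homogeneous of degree one. In particular the hyperplanes $S^{(i)}_j=\{x:M_{i,j}\cdot x=0\}$ defining the arrangement $\mathcal{A}^{(i)}$ are linear subspaces through the origin, so the induced complex $R^{(i)}$ in $\R^{n_{i-1}}$ consists of polyhedral cones. I would then run the inductive construction of Definition \ref{def:polyhedralcomplex}: assuming the cells of $\mathcal{C}(F_{i-1}\circ\cdots\circ F_1)$ are cones, on such a cell $S$ the map $F_{i-1}\circ\cdots\circ F_1$ is affine, and being positively homogeneous it is in fact linear on $S$; therefore every level-set cell $S\cap(F_{i-1}\circ\cdots\circ F_1)^{-1}(Y)$, $Y\in R^{(i)}$, is the intersection of the cone $S$ with the preimage of a cone under a linear map, which is again a polyhedral cone. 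Rationality is propagated through the same induction: with rational weights, $F_{i-1}\circ\cdots\circ F_1|_S$ is represented by a rational matrix (a product of the rational $M_j$ with diagonal $0/1$ matrices recording which ReLU coordinates are active on $S$), so preimages of the rational cones of $R^{(i)}$ stay rational. Combining this with the polyhedral-complex axioms of \cite{grigsby2022transversality} shows $\Sigma_{f_\theta}$ is a generalized rational fan, and a genuine fan under the standing assumption of Remark \ref{rmk:relufan} that forces $\{0\}$ to be the minimal cone; in particular it is a ReLU fan in the sense of Definition \ref{def:relufan}, so $\mathcal{T}$ does land in $\{\text{ReLU fans}\}$.

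The only point that is more than bookkeeping is the inductive step keeping the cells cones: ``affine on each cell'' must be upgraded to ``linear on each cell'', and this works precisely because unbiasedness makes all the compositions positively homogeneous and keeps every cell a cone through the origin, so the level-set construction never produces a polyhedron that fails to be a cone. Finiteness, the face and intersection axioms, rationality, and strong convexity (under Remark \ref{rmk:relufan}) then follow immediately or are quoted, and assembling them gives the proposition.
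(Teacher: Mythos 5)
Your proposal is correct. For comparison: the paper gives essentially no argument for this proposition---it is stated as an immediate repackaging of Definition \ref{def:relufan} and Remark \ref{rmk:relufan}, with the polyhedral-complex structure of $\mathcal{C}_{f_\theta}$ quoted from \cite{grigsby2022transversality} and the passage from generalized fan to genuine fan simply taken as a standing assumption. What you supply is exactly the verification the paper leaves implicit: unbiasedness makes every composition $F_i\circ\cdots\circ F_1$ positively homogeneous of degree one, so the layer hyperplanes are linear subspaces through the origin and the inductive level-set construction only ever intersects a cone with the preimage of a cone under a map that is linear (not merely affine) on that cone---the ``affine plus homogeneous on a cone containing $0$ implies linear'' step is the one genuinely substantive point, and you handle it correctly---while rational weights (rational $M_j$ composed with $0/1$ diagonal activation patterns) propagate rationality of the cones through the induction. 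Your treatment of strong convexity also matches the paper's: neither proves it, and since $n_1>n_0$ alone does not force it, reading Remark \ref{rmk:relufan} as a standing assumption rather than a consequence is the right call. In short, the proposal is a correct, more detailed account of what the paper asserts by construction.
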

\begin{remark}\label{rmk:pairing}
    The real vector space $N_\R$ spanned by the co-character lattice $N$ is identified as $\R^{n_0}$, i.e., $N_\R = \R^{n_0}$. Inherited from the standard coordinates of $\R^{n_0}$, denote by $\{e_1,\ldots,e_{n_0}\}$ we get the standard basis of $N$. Identify the character lattice $M$ as $N$ and denote by $\{e_1^\vee,\ldots,e_{n_0}^\vee\}$ we get the standard basis of $M=N^\vee$ via 
    %\kathrynnote{Use $\langle x, y \rangle$ instead of $< x,y>$}
    $\langle e_i,e_j^\vee \rangle=0,\forall i \not =j$ and $\langle e_i,e_i^\vee \rangle=1, \forall i \in \{1,\ldots,n_0\}$. Here, the pairing $\langle , \rangle:N \times M \map \Z$ is the usual dot product.
\end{remark}

Let $f_\theta \in \mathcal{F}_{0}^\Q$ and the corresponding ReLU fan $\Sigma_{f_\theta}$ be given.

\begin{definition}\label{def:reluvariaty}
     Denote by $X_{\Sigma_{f_\theta}}$ the \emph{ReLU toric variety} is defined to be the toric variety corresponding to the ReLU fan $\Sigma_{f_\theta}$. Since the ReLU fan $\Sigma_{f_\theta}$ is \emph{complete} (Definition \ref{def:completefan}), the ReLU toric variety $X_{\Sigma_{f_\theta}}$ is \emph{proper}.
\end{definition}    
\begin{definition}\label{def:ReLUdivisor}
    The output finitely-piecewise linear function $f$ serves as the support function (Definition \ref{def:supportfunction}) of a $\Q-$Cartier divisor $D_f$ supported on $\Sigma_{f_\theta}$. Then we define $D_f$ to be the \emph{ReLU Cartier divisor}.
\end{definition}

\begin{lemma}\label{lem:slopevector}
When the ReLU Cartier divisor associated with the output function $f$ is Cartier with \emph{Cartier data} $\{m_\sigma\}_{\sigma \in \Sigma_{f_\theta}(n)}$ (Definition \ref{def:cartierdivisor}), then we may interpret the Cartier data as the \emph{slope vector} of the linear term. More specifically, say $f:\R^{n_0} \map \R$, then in each maximal dimensional cone $\sigma \in \Sigma_{f_\theta}$, the function $f$ is linear of the form $a_1x_1+\ldots+a_{n_0}x_{n_0}$. We have the slope vector $$(a_1,\ldots,a_{n_0})=m_\sigma.$$
\end{lemma}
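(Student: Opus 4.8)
The plan is to unwind the definitions and observe that both objects -- the ``slope vector'' of $f$ on a maximal cone and the Cartier datum $m_\sigma$ -- are characterized by exactly the same pairing conditions against the lattice $N$. First I would recall, via Remark \ref{rmk:supportfunction}, that the piecewise linear function $f$ (as the support function $\varphi_{D_f}$ of the ReLU Cartier divisor $D_f$) satisfies $\varphi_{D_f}(u) = \langle m_\sigma, u\rangle$ for all $u \in \sigma$, where $\{m_\sigma\}_{\sigma \in \Sigma_{f_\theta}(n)}$ is the Cartier data. On the other hand, by hypothesis $f$ is linear on the maximal-dimensional cone $\sigma$ with the explicit expression $f(x_1,\dots,x_{n_0}) = a_1 x_1 + \cdots + a_{n_0} x_{n_0}$; under the identification $N_\R = \R^{n_0}$ from Remark \ref{rmk:pairing}, this is precisely the linear functional $u \mapsto \langle (a_1,\dots,a_{n_0}), u\rangle$ expressed in the standard basis.

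The key step is then to compare the two linear functionals $u \mapsto \langle m_\sigma, u\rangle$ and $u \mapsto \langle(a_1,\dots,a_{n_0}), u\rangle$ on $\sigma$. Since $\sigma$ is a maximal-dimensional cone in $N_\R \cong \R^{n_0}$, its span is all of $\R^{n_0}$, so $\sigma$ contains a basis of $N_\R$; a linear functional on $\R^{n_0}$ is determined by its values on such a basis. Both functionals agree with $f$ on $\sigma$ by the two descriptions above, hence they agree on a spanning set, hence they are equal as elements of $M_\R$. Evaluating against the standard basis $e_1,\dots,e_{n_0}$ of $N$ and using $\langle e_i, e_j^\vee\rangle = \delta_{ij}$ gives $m_\sigma = \sum_i a_i e_i^\vee$, i.e. $(a_1,\dots,a_{n_0}) = m_\sigma$ under the identification of $M$ with $\Z^{n_0}$.

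I expect the only genuine subtlety -- rather than an obstacle -- to be bookkeeping: making sure the identifications $N_\R = \R^{n_0}$, $M \cong N$, and the ``pairing = dot product'' convention of Remark \ref{rmk:pairing} are applied consistently, and noting that the hypothesis ``$D_f$ is Cartier'' (as opposed to merely $\Q$-Cartier) is exactly what guarantees $m_\sigma$ lies in $M$ rather than $M_\Q$, so that the integrality in the statement is meaningful. No deep input is needed beyond Remark \ref{rmk:supportfunction} and the fact that maximal cones are full-dimensional; the proof is essentially a one-line identification once the conventions are pinned down.
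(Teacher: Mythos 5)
Your proposal is correct and follows essentially the same route as the paper: identify $f$ with the support function $\varphi_{D_f}$ so that $f(u)=\langle m_\sigma,u\rangle$ on $\sigma$, compare with the explicit linear expression $a_1x_1+\cdots+a_{n_0}x_{n_0}$, and conclude $m_\sigma=\sum_i a_i e_i^\vee$ under the conventions of Remark \ref{rmk:pairing}. Your explicit remark that full-dimensionality of $\sigma$ is what lets agreement on $\sigma$ force equality of the two functionals is a point the paper leaves implicit, but it is the same argument.
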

\begin{proof}
    $f$ served as the support function $\varphi_{D_f}:\vert \Sigma_{f_\theta} \vert \map \R$ with 
    $$u \mapsto \varphi_{D_f}(u)=\langle m_\sigma,u \rangle \text{ when }u \in \sigma.
    $$ On the other hand, when $u\in \sigma$, $\varphi_{D_f}(u)=a_1x_1+\ldots+a_{n_0}x_{n_0}$ with $u=x_1e_1+\ldots+x_{n_0}e_{n_0}$. It follows that $m_\sigma=a_1e_1^\vee+\ldots+a_{n_0}e_{n_0}^\vee$. This yields the slope vector $(a_1,\ldots,a_{n_0})=m_\sigma$ as desired.
\end{proof}

\begin{remark}\label{rmk:slopevector}
    When the ReLU Cartier divisor associated with the output function $f$ is $\Q-$Cartier, we have the same result as in Lemma \ref{lem:slopevector}.
\end{remark}
To study the set of piecewise linear functions that can be realized by certain architectures, it suffices to study the set of ReLU Cartier divisors that can be realized by those architectures. However, two piecewise linear functions $f,f'$ will correspond to the same $\Q-$Cartier divisor $D$ if they differ by a linear term, i.e. $f-f'=g$, where $g$ is linear. If we take the set of piecewise linear functions and quotient out by linear terms, then we get a one-to-one correspondence as follows:
\begin{equation}\label{eq:functiondivisor}
    \{\text{piecewise linear functions}\}/\{\text{linear terms}\} \iso \{\Q-\text{Cartier divisors}\}
\end{equation}

To focus on studying the set of $\Q-$Cartier divisors realizable by a ReLU neural network of a given depth, we need to make sure that the kernel of \eqref{eq:functiondivisor} does not change the depth of the ReLU neural network, which yields the following theorem.

\FunctionDivisor*
\begin{proof}
    Since $g$ is affine linear, let $g=\sum_{i=1}^{n_0} a_ix_i+b_g$ where $\begin{bmatrix}
        x_1 & x_2 & \cdots &x_{n_0}
    \end{bmatrix}^T \in \R^{n_0}$ and $a_i,b_g \in \R$ for $i\in\{1,\ldots,n_0\}$. Say $f_\theta:\R^{n_0} \xrightarrow{\varsigma \circ A_1}\R^{n_1} \xrightarrow{\varsigma \circ A_2} \R^{n_2} \xrightarrow{\varsigma \circ A_3} \cdots \xrightarrow{\varsigma \circ A_k} \R^{n_k} \xrightarrow{A_{k+1}} \R$ realizes $f$. Then let $A_1'$ be an $(n_1+2) \times (n_0+1)$ matrix, whose first $n_1$ rows are the same as $A_1$ and the last two rows are $\begin{bmatrix}
        a_1 & a_2 & \cdots & a_{n_0} & b_g\\
        -a_1 & -a_2 & \cdots &-a_{n_0} & -b_g
    \end{bmatrix}$, i.e., 
    $$
    A_1' \coloneqq \begin{bmatrix}
        A_1 \\
        \hline
        \begin{matrix}
            a_1 & a_2 & \cdots & a_{n_0} & b_g\\
        -a_1 & -a_2 & \cdots &-a_{n_0} & -b_g
        \end{matrix}
    \end{bmatrix}.
    $$
    For all $j$ from $2$ to $k$, note that $A_j$ is an $n_j \times (n_{j-1}+1)$ matrix which can be written as $A_j=\begin{bmatrix}
        M_j \vert \vect{b_j}
    \end{bmatrix}$ (Definition \ref{def:ReLU}). Then let $A_j'$ be an $(n_j+2) \times (n_{j-1}+3)$ matrix as follows:
$$
A_j' \coloneqq \begin{bmatrix}
  M_j
  & \rvline & \bigzero & \rvline &\vect{b_j}\\
\hline
  \bigzero & \rvline &
  \begin{matrix}
  1 & 0\\
  0 & 1
  \end{matrix} & \rvline &\begin{matrix}
      0 \\
      0
  \end{matrix}
\end{bmatrix},\forall j \in\{2,\ldots,k\}.
$$
As for $A_{k+1}'$, we note that $A_{k+1}=\begin{bmatrix}
    M_{k+1} & \rvline &b_{k+1}
\end{bmatrix}$. Then let $A_{k+1}'$ be a $1 \times (n_k+3)$ matrix as follows:
$$
A_{k+1}' \coloneqq \begin{bmatrix}
  M_{k+1}
  & \rvline & 1 & -1 & b_{k+1}
\end{bmatrix}.
$$
Then $f'$ is realizable by the following $k+1$-layer ReLU neural network $f'_\theta:\R^{n_0} \xrightarrow{\varsigma \circ A_1'}\R^{n_1+2} \xrightarrow{\varsigma \circ A_2'} \R^{n_2+2} \xrightarrow{\varsigma \circ A_3'} \cdots \xrightarrow{\varsigma \circ A_k'} \R^{n_k+2} \xrightarrow{A_{k+1}'} \R$, i.e., $f' \in \operatorname{ReLU}(n_0,k)$ as desired.
\end{proof}

\begin{remark}\label{rmk:functiondivisor}
    For Theorem \ref{thm:functiondivisor}, if we restrict ourselves to $\operatorname{ReLU}^\R_{\vect{0}}(n_0,k)$, i.e., where all affine maps $A_i$ are linear maps $L_i$, then we have the same conclusion. That is, if $f \in \operatorname{ReLU}^\R_{\vect{0}}(n_0,k)$ and $f'-f=g$, where $g:\R^{n_0} \to \R$ is linear, then $f' \in \operatorname{ReLU}^\R_{\vect{0}}(n_0,k)$.
    \end{remark}
With Theorem \ref{thm:functiondivisor} and Remark \ref{rmk:functiondivisor}, we know that investigating functions that can be realized by architectures of certain depth $k$ is equivalent to studying the ReLU Cartier divisors that can be realized by those architectures. 

Before moving to the next section, we illustrate the concepts defined in \S \ref{sec:ReLUtoric} with a specific example as follows.

\begin{example}\label{ex:relutoric}
    Continuing with Example ~\ref{ex:toricencodingmap},
the ReLU fan $\Sigma_{f_\theta}$ is as follows:
$$
    \begin{tikzpicture}[scale=0.6]
    \draw[red, thick] (-2.5,0) -- (0,0);
    \draw[black, thin] (-2.5,0) -- (2.5,0);
    \draw[red, thick] (0,0) -- (0,-2.5);
    \draw[black, thin] (-2,-2) -- (2,2);
    \draw[black] (2.1,1.45) node{$H_3^{(1)}$};
    \draw[black] (2.6,-0.45) node{$H_1^{(1)}=H_2^{(1)}$};
    \draw[red] (1.5, -2.3) node{$F_1^{-1}(H_1^{(2)})$};
    \draw[black] (1.5,0.6) node{$\sigma_1$};
    \draw[black] (-0.5,0.8) node{$\sigma_2$};
    \draw[black] (-1.2,-0.6) node{$\sigma_3$};
    \draw[black] (-0.45,-1) node{$\sigma_4$};
    \draw[black] (1.25,-1.25) node{$\sigma_5$};
    \draw[black] (2.4, 0.2) node{$\rho_1$};
    \draw[black] (1.6, 2) node{$\rho_2$};
    \draw[black] (-2.4, 0.2) node{$\rho_3$};
    \draw[black] (-2, -1.55) node{$\rho_4$};
    \draw[black] (-0.3, -2.3) node{$\rho_5$};
    \end{tikzpicture}.
    $$
    Following the Definition \ref{def:benthyperplanearrangement}, the hyperplane $\{y=0\}$ is denoted as $H^{(1)}_1=H_2^{(1)}$, the hyperplane $\{x=y\}$ is denoted as $H_3^{(1)}$ and the bent hyperplane \color{red}$F_1^{-1}(H^{(2)}_1)$ \color{black} is colored in red.
    
    The output piecewise linear function is $f=$ max$\{0,x,y\}$ or $f=0\oplus x\oplus y$ in tropical algebra. The ReLU Cartier divisor associated with $f$ is $D_f=-D_{\rho_1}-D_{\rho_2}$ with Cartier data, or slope vector $\{m_{\sigma_1}=m_{\sigma_5}=e_1^\vee, m_{\sigma_2}=e_2^\vee,m_{\sigma_3}=m_{\sigma_4}=0\}$. 
\end{example}

\section{A motivating link between tropical geometry of ReLU Neural Networks and toric geometry of ReLU Neural Networks}\label{sec:tropicaltoric}
Here we define some related concepts from tropical geometry, but more details on tropical geometry can be found in the book \cite{maclagan2021introduction}.
\begin{definition}\label{def:tropicalalgebra}
    In the \emph{tropical semiring} $(\R \cup \{-\infty\}, \oplus,\odot)$, the \emph{tropical sum} $\oplus$ and the \emph{tropical product} $\odot$ are defined as follows:
    $$
    x \oplus y \coloneqq \text{ max}\{x,y\} \text{ and } x \odot y \coloneqq x+y.
    $$
\end{definition}

\begin{remark} \label{rmk:tropicalalgebra}
    Both addition and multiplication are commutative and associative. Negavie infinity is the identity element for addition and zero is the identity element for multiplication:
    $$
    x \oplus -\infty = x\text{ and }x \odot 0=x.
    $$
\end{remark}

\begin{definition}\label{def:tropicalmonomial}
    Let $x_1,x_2,\ldots,x_n$ be variables which represent elements in the tropical semiring $(\R \cup \{-\infty\},\oplus,\odot)$. A \emph{monomial} is any product of these variables with repetition allowed. By commutativity, we can sort the product and write monomials in the usual notation, with the variables raised to exponents:
$$
x_1 \odot x_2 \odot x_1 \odot x_3=x_1^2x_2x_3.
$$
A monomial represents a function from $\R^n$ to $\R$. When evaluating this function in classical arithmetic, we get a linear function:
$$
x_1 +x_2+x_1+x_3=2x_1+x_2+x_3.
$$
\end{definition}

%\qilenote{Is there a symbol for the tropical ring, say $\mathcal{T}$? Then this definition is about monomials in $\mathcal{T}[x_1, \cdots, x_n]$.}

\begin{remark}\label{rmk:tropicalmonomial}
    Generally, as a tropical monomial, we allow negative integer exponents:
$$
x_1^2x_2^{-1}x_3^{-2} := 2x_1-x_2-2x_3.
$$
\end{remark}

\begin{definition}\label{def:tropicalpolynomial}
    A \emph{tropical polynomial} is a finite linear combination of tropical monomials:
$$
p(x_1,\ldots,x_n) = a \odot x_1^{i_1}x_2^{i_2}\cdots x_n^{i_n}
 \oplus b \odot x_1^{j_1}x_2^{j_2}\cdots x_n^{j_n}\oplus \ldots,
$$
where the coefficients $a,b,\ldots$ are real numbers and the exponents $i_1,j_1,i_2,j_2,\ldots$ are integers.
\end{definition}

\begin{remark}\label{rmk:tropicalpolynomial}
    Every tropical polynomial represents a function from $\R^n$ to $\R$. When evaluating this function in classical arithmetic, we get the maximum of a finite collection of affine linear functions:
$$
p(x_1,x_2,\ldots,x_n)=\text{ max}\{a+i_1x_1+i_2x_2+\ldots+i_nx_n,b+j_1x_1+j_2x_2+\ldots+j_nx_n,\ldots\}.
$$
This function is a convex (Definition \ref{def:convexfunction}) continuous finitely-piecewise linear function (Definition \ref{def:piecewiselinear}).
\end{remark}

\begin{definition}\label{def:tropicalrational}
    A \emph{tropical rational} function is a difference in classical arithmetic, or, equivalently, a tropical quotient of two tropical polynomials $f(x)$ and $g(x)$:
    $$
    f(x) \oslash g(x) \coloneqq f(x)-g(x)
    $$
    which will be denoted as $f \oslash g$.
\end{definition}

%\qilenote{Maybe this reference can be mentioned at the beginning of this section.}

%\begin{theorem}\label{thm:ReLUtropicaldivisor}
%There is a correspondence among the set of integral feedforward ReLU neural networks, the set of tropical rational functions and Cartier divisors defined in Definition \ref{def:cartierdivisor} as follows:
    %\begin{align*}
        %ReLU_{n_0}^{\Z}&=\{\text{tropical rational functions from }\R^{n_0} \text{ to }\R\}\\
        %&=\{\text{Cartier divisors supported on }\Sigma_{f_\theta}^{\text{big}}\}.
    %\end{align*}
    
%\end{theorem}
%\begin{proof}
    %The correspondence $ReLU_{n_0}^\Z=\{$tropical rational functions from $\R^{n_0}$ to $\R$\} is established in \cite{zhang2018tropical}. The correspondence $ReLU_{n_0}^\Z=$\{Cartier divisors supported on $\Sigma_{f_\theta}^{\text{big}}$\} will be established in Section \ref{sec:ReLUtoric}.
%\end{proof}

%\begin{remark}
    %If we are allowed rational exponents for tropical rational functions, then we get a correspondence as follows:
%\begin{align*}
        %ReLU_{n_0}^{\Q}&=\{\text{tropical rational functions from }\R^{n_0} \text{ to }\R\}\\
        %&=\{\Q-\text{Cartier divisors supported on }\Sigma_f^{\text{big}}\}.
    %\end{align*}
%\end{remark}

\begin{definition}\label{def:newtonpolytope}
    Given a tropical polynomial $f(x)=c_1x^{u_1}\oplus c_2x^{u_2}\oplus \ldots \oplus c_r x^{u_r}$, where $x=(x_1,x_2,\ldots,x_n), u_i \in \Z^n, c_i \in \R, \forall 1 \leq i \leq r$. The newton polytope associated to this $f$ is the convex hull of $u_1,u_2,\ldots,u_r\in \Z^n$, regarded as points in $\R^n$
    $$
    \text{Newt}(f)=\text{ Conv}(u_i) \subseteq \R^n.
    $$
\end{definition}

After reviewing the background on tropical geometry, we next reveal the linkage between the tropical geometry and the toric geometry of ReLU neural networks.
\begin{theorem}\label{thm:newtonandpd}
Let $f\in \operatorname{ReLU}_{\vect{0}}^\Z(n_0,k)$, and suppose $f_\theta \in \mathcal{F}^\Q_{0}$ realizes $f$:
$$
    f_\theta: \R^{n_0} \xrightarrow{\varsigma \circ L_1}\R^{n_1} \xrightarrow{\varsigma \circ L_2} \R^{n_2} \xrightarrow{\varsigma \circ L_3} \cdots \xrightarrow{\varsigma \circ L_k} \R^{n_k} \xrightarrow{L_{k+1}} \R.
$$
Let $X_{\Sigma_{f_\theta}}$ and $D$ be the corresponding ReLU toric variety (Definition \ref{def:reluvariaty}) and ReLU Cartier divisor (Definition \ref{def:ReLUdivisor}).
%\qilenote{corresponding ... respectively. check with Kathryn about the English expression?}
If $f$ is a tropical polynomial, then 
$$
P_{-D}=-\text{Newt}(f),
$$
where $P_{-D}$ is the polytope (Definition \ref{def:polytopeofdivisor}) associated with the Cartier divisor $-D$.
\end{theorem}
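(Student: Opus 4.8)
The plan is to recognize $-D$ as a basepoint-free divisor, read $P_{-D}$ directly off the Cartier data of $-D$ via Definition~\ref{def:basepointfree}, and then match that Cartier data — up to sign — with the slopes of $f$, whose convex hull is $\text{Newt}(f)$.

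First, since $f$ is a tropical polynomial it is convex in the sense of Definition~\ref{def:convexfunction} (Remark~\ref{rmk:tropicalpolynomial}), and since $f \in \operatorname{ReLU}_{\vect 0}^\Z(n_0,k)$ its slope on every linear region is integral, so the ReLU Cartier divisor $D = D_f$ is genuinely Cartier. By Definition~\ref{def:ReLUdivisor} its support function is $\varphi_D = f$, hence $\varphi_{-D} = -f$ is concave in the ordinary sense, i.e.\ convex in the sense of Definition~\ref{def:basepointfree} (Remark~\ref{rmk:convexity}). Thus $-D$ is basepoint free, and Definition~\ref{def:basepointfree} gives
$$
P_{-D} = \text{Conv}\bigl(m'_\sigma \mid \sigma \in \Sigma_{f_\theta}(n)\bigr),
$$
where $\{m'_\sigma\}$ is the Cartier data of $-D$.

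Next I would identify this data. If $\{m_\sigma\}$ denotes the Cartier data of $D$, comparing $\langle m'_\sigma, u_\rho\rangle = a_\rho$ with $\langle m_\sigma, u_\rho\rangle = -a_\rho$ over the rays $\rho \in \sigma(1)$ (which span $M_\R$ since $\sigma$ is $n$-dimensional) forces $m'_\sigma = -m_\sigma$. By Lemma~\ref{lem:slopevector}, $m_\sigma$ is the gradient of $f$ on the maximal cone $\sigma$. Because the ReLU fan $\Sigma_{f_\theta} = \mathcal C_{f_\theta}$ refines the polyhedral complex of linear regions of $f$ (Definitions~\ref{def:relufan} and \ref{def:polyhedralcomplex}), the set $\{m_\sigma \mid \sigma \in \Sigma_{f_\theta}(n)\}$ coincides with the set of gradients of $f$ over its full-dimensional linear regions: each $m_\sigma$ is one such gradient, and conversely each full-dimensional linear region of $f$ has interior meeting the interior of some maximal cone of the (complete) fan, on which $f$ carries the same slope.

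It then remains to prove the purely polyhedral identity $\text{Conv}\bigl(\{\text{gradients of }f\}\bigr) = \text{Newt}(f)$. Writing $f = \max_i(\langle u_i, x\rangle + a_i)$ with $u_i \in \Z^{n_0}$, so $\text{Newt}(f) = \text{Conv}(\{u_i\})$ (Definition~\ref{def:newtonpolytope}), the inclusion ``$\subseteq$'' is immediate because every gradient of $f$ is the exponent $u_i$ of the active monomial; for ``$\supseteq$'' it suffices to realize each vertex $u_{i_0}$ of $\text{Conv}(\{u_i\})$, and a functional $\langle\,\cdot\,,x_0\rangle$ uniquely maximized over $\{u_i\}$ at $u_{i_0}$ makes the $i_0$-monomial strictly dominant on a full-dimensional region near $t x_0$ for $t \gg 0$, where $f$ then has gradient $u_{i_0}$. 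Combining,
$$
P_{-D} = \text{Conv}(-m_\sigma \mid \sigma) = -\text{Conv}(m_\sigma \mid \sigma) = -\text{Newt}(f).
$$
No step is individually deep; the care lies in the polyhedral identity together with checking that refining up to the full canonical complex $\mathcal C_{f_\theta}$ does not enlarge the slope set, and — throughout — in keeping straight the two opposite sign/convexity conventions of Definitions~\ref{def:convexfunction}, \ref{def:basepointfree} and \ref{def:polytopeofdivisor}, which is precisely why it is $-D$ (not $D$) that is basepoint free and $P_{-D} = -\text{Newt}(f)$ (not $P_D = \text{Newt}(f)$).
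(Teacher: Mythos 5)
Your proof is correct and takes essentially the same route as the paper's: observe that $-f$ is convex in the sense of Definition \ref{def:basepointfree}, so $-D$ is basepoint free and $P_{-D}$ is the convex hull of its Cartier data, then identify that data with the negated slope vectors of $f$ via Lemma \ref{lem:slopevector} and match these with the exponents defining $\text{Newt}(f)$. The only difference is that you make explicit two steps the paper leaves implicit — that possibly redundant monomials do not make $\text{Conv}(\{\text{gradients of }f\})$ smaller than $\text{Newt}(f)$, and that passing to the finer canonical polyhedral complex does not change the slope set — which is a tightening of the same argument rather than a different one.
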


\begin{proof}
    If $f$ is a tropical polynomial, then $-f$ is convex in the sense of Definition \ref{def:basepointfree}, and the Cartier divisor corresponding to $-f$ is $-D$. We may write $f=\vect{x}^{u_1}\oplus \vect{x}^{u_2} \oplus \ldots \oplus \vect{x}^{u_n}$, where $\vect{x}=(x_1,\ldots,x_{n_0})$ and $u_i \in \Z^{n_0},\forall i$, so $-f=\vect{x}^{-u_1}\oplus \vect{x}^{-u_2} \oplus \ldots \oplus \vect{x}^{-u_n}$. By convexity of $-f$, we know that $-D$ is basepoint free and
    $$
    P_{-D}=\text{Conv}(m_\sigma \vert \sigma \in \Sigma(n)).
    $$
    With Lemma \ref{lem:slopevector}, $m_\sigma$'s are identified as slope vectors, which are precisely the exponents $u_i$'s of the tropical polynomial $f$, up to a minus sign. Since Newt$(f)=$ Conv$(u_1,u_2,\ldots,u_n)$, we may conclude that $P_{-D}=-$ Newt$(f)$.
\end{proof}
\begin{remark}\label{rmk:minussign}
    There is a minus sign in Theorem \ref{thm:newtonandpd}. The reason behind this is that for a piecewise linear function $f:\R^{n_0} \map \R$, being convex in the sense of Definition \ref{def:basepointfree} means being concave in the sense of Definition \ref{def:convexfunction}.
\end{remark}

%\qilenote{Rephrase the following paragraph?}

%When $f$ is a tropical rational function rather than a tropical polynomial, there is in general no analogy of Newton polytope associated with $f$. However, from toric geometry point of view, the polytope $P_D$ or $P_{-D}$ associated with the ReLU Cartier divisor $D$ or $-D$ is always well-defined. Theorem \ref{thm:newtonandpd} asserts that when $f$ is a tropical polynomial, two polytopes from tropical geometry and toric geometry coincide with each other. Therefore, we believe that the polytope $P_D$ or $P_{-D}$ generalizes the Newton polytope in the setting of ReLU neural networks. 
\begin{definition}\label{def:mixedvolume}
    Let $P$ be a lattice polytope with dim $P=n$. Denote by Vol$(P)$, the \emph{mixed volume} of $P$ is defined as $n!$ times the Euclidean volume of $P$.
\end{definition}

\begin{remark}\label{rmk:simplexvolume}
    Note that the mixed volume of the simplex $\Delta_0^n=$ Conv$(e_1,\ldots,e_n)$ is $1$, i.e., Vol$(\Delta_0^n)=1$.
\end{remark}

\begin{lemma}\label{lem:mixedvolume}
    Let $P$ be a lattice polytop with dim $P=n$. There is an alternative interpretation of Vol$(P)$, which is
    $$
    \text{Vol}(P)=n! \lim\limits_{m \rightarrow \infty} \frac{\# mP \cap M }{m^n}.
    $$
\end{lemma}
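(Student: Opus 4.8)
The plan is to peel off the definition and reduce the claim to an asymptotic lattice-point count. By Definition~\ref{def:mixedvolume} we have $\mathrm{Vol}(P) = n!\,\mathrm{vol}(P)$, where $\mathrm{vol}$ denotes ordinary Euclidean volume, so the lemma is equivalent to
$$
\lim_{m \to \infty} \frac{\#(mP \cap M)}{m^{n}} \;=\; \mathrm{vol}(P).
$$
Identifying $M$ with $\Z^{n}$ via the standard basis as in Remark~\ref{rmk:pairing} (so that a fundamental domain of $M$ has volume $1$), this asserts that the $m$-fold dilate of $P$ contains roughly $\mathrm{vol}(P)\,m^{n}$ lattice points; it is the leading-order content of Ehrhart's theorem, and I would prove it directly by a unit-cube packing argument.

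First I would rewrite the count as a volume. Let $C = [0,1)^{n}$ be the half-open unit cube, whose translates $v + C$ for $v \in M$ tile $\R^{n}$, and set $S_{m} \coloneqq \bigcup_{v \in mP \cap M}(v + C)$. Because these cubes are pairwise disjoint and each has volume $1$, we get $\#(mP \cap M) = \mathrm{vol}(S_{m})$, so it suffices to prove that $\mathrm{vol}(S_{m})/m^{n} \to \mathrm{vol}(P)$.

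Next I would sandwich $S_{m}$ between an outer and an inner neighborhood of $mP$. If $x \in S_{m}$ then $x \in v + C$ for some $v \in mP$, so $|x - v| < \sqrt{n}$ and therefore $S_{m} \subseteq mP + \bar{B}_{\sqrt{n}}$, the closed $\sqrt{n}$-thickening of $mP$. Conversely, if $x \in mP$ has distance greater than $\sqrt{n}$ from $\partial(mP)$, then the anchor $v \coloneqq \lfloor x \rfloor$ of the cube containing $x$ satisfies $|x - v| < \sqrt{n}$, so the segment $[x,v]$ cannot leave $mP$ and $v \in mP$; hence $x \in v + C \subseteq S_{m}$, which shows that the inner collar $(mP)^{-\sqrt{n}} \coloneqq \{x \in mP : \mathrm{dist}(x,\partial(mP)) > \sqrt{n}\}$ lies in $S_{m}$. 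Since both the inner collar and the Minkowski thickening commute with dilation by $m$, rescaling by $1/m$ gives
$$
\mathrm{vol}\bigl(P^{-\sqrt{n}/m}\bigr) \;\le\; \frac{\#(mP \cap M)}{m^{n}} \;\le\; \mathrm{vol}\bigl(P + \bar{B}_{\sqrt{n}/m}\bigr).
$$
Letting $m \to \infty$, the upper bound tends to $\mathrm{vol}(P)$ because $\epsilon \mapsto \mathrm{vol}(P + \bar{B}_{\epsilon})$ is (by the Steiner formula) a polynomial in $\epsilon$ with constant term $\mathrm{vol}(P)$, and the lower bound tends to $\mathrm{vol}(P)$ because $\mathrm{vol}(P) - \mathrm{vol}(P^{-\epsilon})$ is bounded by the volume of the $\epsilon$-neighborhood of $\partial P$, which vanishes as $\epsilon \to 0$. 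By the squeeze, the middle term converges to $\mathrm{vol}(P)$, proving the lemma.

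The only step with real content is the boundary estimate in the last paragraph, and this is exactly where the hypothesis that $P$ is a \emph{polytope} is used: $\partial P$ is a finite union of polytopes of dimension $< n$, hence has $n$-dimensional Lebesgue measure $0$ and finite $(n-1)$-dimensional content, so its $\epsilon$-neighborhood has volume $O(\epsilon)$ as $\epsilon \to 0$. Alternatively, one can bypass both this estimate and the existence of the limit by invoking Ehrhart's theorem directly: for positive integers $m$, $\#(mP \cap M)$ agrees with a polynomial in $m$ of degree $n$ whose leading coefficient is $\mathrm{vol}(P)$, from which the displayed limit is immediate.
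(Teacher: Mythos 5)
Your proof is correct, but it takes a genuinely different route from the paper. The paper's proof is essentially a citation: it writes $\#(mP\cap M)=E_P(m)=\sum_{i=0}^n e_i(P)m^i$ (the Ehrhart polynomial), quotes the fact that the leading coefficient $e_n(P)$ equals the Euclidean volume of $P$, and observes that dividing by $m^n$ and letting $m\to\infty$ extracts that leading coefficient — this is exactly the "alternative" you mention in your final sentence. Your main argument instead proves the asymptotic count from first principles: you convert the lattice-point count into the volume of a union of half-open unit cubes, squeeze that set between the inner collar $(mP)^{-\sqrt n}$ and the outer thickening $mP+\bar B_{\sqrt n}$, rescale by $1/m$, and use the Steiner formula on one side and the $O(\epsilon)$ volume of the $\epsilon$-neighborhood of $\partial P$ (a finite union of lower-dimensional polytopes) on the other. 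The steps check out, including the commutation of collar and thickening with dilation and the use of full-dimensionality of $P$. What the two approaches buy: the paper's proof is three lines but leans on Ehrhart's theorem, a nontrivial external result (and the statement $e_n(P)=\operatorname{vol}(P)$ is itself usually proved by an argument like yours); your proof is self-contained, needs no polynomiality of the counting function, establishes the existence of the limit directly via the squeeze, and in effect reproves the leading-coefficient fact that the paper merely quotes — at the cost of the boundary estimates, which you correctly identify as the only step with real content.
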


\begin{proof}
    Note that 
    $$
    \# mP \cap M = E_P(m)=\sum_{i=0}^n e_i(P) m^i
    $$
where $E_P(m)$ is called the \emph{Ehrhart polynomial} and its coefficients $e_i(P)$'s are called  \emph{Ehrhart coefficients}. It is known that 
$$
e_n(P)=\text{ Euclidean volume of }P
$$
In the meantime, we notice that 
$$
\lim\limits_{m \rightarrow \infty} \frac{\# mP \cap M }{m^n} = e_n(P)
$$
since taking the limit above amounts to taking the leading coefficient of the Ehrhart polynomial, which is precisely $e_n(P)$. Therefore,
\begin{align*}
    n! \lim\limits_{m \rightarrow \infty} \frac{\# mP \cap M }{m^n}&= n!\cdot e_n(P)\\
    &=n! \cdot \text{Euclidean volume of }P\\
    &=\text{Vol}(P)
\end{align*}
\end{proof}

%\qilenote{Can $f_{\theta}$ be any such ReLU neural network?}
%\yaoyingnote{Yes.}
\begin{theorem}\label{thm:polynomialpdvolume}
    
Let $f\in \operatorname{ReLU}_{\vect{0}}^\Z(n_0,k)$, and suppose $f_\theta \in \mathcal{F}_{0}^\Q$ realizes $f$:
$$
    f_\theta: \R^{n_0} \xrightarrow{\varsigma \circ L_1}\R^{n_1} \xrightarrow{\varsigma \circ L_2} \R^{n_2} \xrightarrow{\varsigma \circ L_3} \cdots \xrightarrow{\varsigma \circ L_k} \R^{n_k} \xrightarrow{L_{k+1}} \R.
$$
    When the output piecewise-linear function $f$ is a tropical polynomial, we have
    $$
    \text{Vol(Newt}(f))=\text{Vol}(\mathcal{O}_{X_{\Sigma_{f_\theta}}}(-D)),
    $$
    where Vol(Newt$(f)$) is the mixed volume (Definition \ref{def:mixedvolume}) of the Newton polytope, and Vol$(\mathcal{O}_{X_{\Sigma_{f_\theta}}}(-D))$ is the volume of the line bundle $\mathcal{O}_{X_{\Sigma_{f_\theta}}}(-D)$ (Definition \ref{def:volumeoflinebundle}).
\end{theorem}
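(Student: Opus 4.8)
The plan is to reduce both sides to the Euclidean volume of the polytope $P_{-D}$ and then match them through Theorem \ref{thm:newtonandpd}. Write $n=n_0$ and set $P:=P_{-D}$. First I would compute $h^0$ of the relevant line bundles: for each positive integer $m$ one has $P_{-mD}=mP$, so by Proposition \ref{prop:polytopeofdivisor}
\[
h^0\bigl(X_{\Sigma_{f_\theta}},\mathcal{O}_{X_{\Sigma_{f_\theta}}}(-mD)\bigr)=\#\bigl(mP\cap M\bigr).
\]
(If $-D$ is only $\Q$-Cartier rather than Cartier, one first passes to a multiple $\ell(-D)$ that is Cartier; the scaling $P_{\ell(-mD)}=m\,P_{\ell(-D)}$ keeps $P_{\ell(-D)}$ a lattice polytope, and the count along the subsequence $m\in\ell\Z$ suffices since, by Lemma \ref{lem:mixedvolume}'s Ehrhart input, $\#(mP\cap M)$ is polynomial in $m$, so the $\limsup$ in Definition \ref{def:volumeoflinebundle} is a genuine limit. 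This is the one place where a little care is needed.) Feeding the count into Definition \ref{def:volumeoflinebundle} gives
\[
\text{Vol}\bigl(\mathcal{O}_{X_{\Sigma_{f_\theta}}}(-D)\bigr)=n!\lim_{m\to\infty}\frac{\#(mP\cap M)}{m^n}.
\]

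Next I would invoke Lemma \ref{lem:mixedvolume}. When $\dim P=n$, the right-hand side of the last display is exactly $\text{Vol}(P)=\text{Vol}(P_{-D})$. When $\dim P<n$, both sides of the theorem are zero: the lattice-point count grows like $m^{\dim P}$, strictly slower than $m^n$, so the line-bundle volume vanishes, while $\text{Vol}(\text{Newt}(f))$, being $n!$ times the $n$-dimensional Euclidean volume of a polytope of dimension $<n$, is also $0$. So that degenerate case is disposed of in one line, and otherwise I am reduced to comparing $\text{Vol}(P_{-D})$ with $\text{Vol}(\text{Newt}(f))$.

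Finally, since $f$ is a tropical polynomial, Theorem \ref{thm:newtonandpd} gives $P_{-D}=-\text{Newt}(f)$. The antipodal map $x\mapsto -x$ on $M_\R\cong\R^n$ is a lattice automorphism of determinant $(-1)^n=\pm1$, hence preserves Euclidean volume, and therefore preserves mixed volume (Definition \ref{def:mixedvolume}). Thus $\text{Vol}(P_{-D})=\text{Vol}(-\text{Newt}(f))=\text{Vol}(\text{Newt}(f))$, and stringing the equalities together yields
\[
\text{Vol}(\text{Newt}(f))=\text{Vol}\bigl(\mathcal{O}_{X_{\Sigma_{f_\theta}}}(-D)\bigr),
\]
as claimed. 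The only genuinely nontrivial point in the argument is the Cartier-versus-$\Q$-Cartier bookkeeping flagged above — making sure that $h^0(-mD)$ is the honest lattice-point count $\#(mP_{-D}\cap M)$ even when $X_{\Sigma_{f_\theta}}$ is singular, and that passing to a Cartier multiple does not alter the leading growth term; everything else (the scaling $P_{-mD}=mP_{-D}$, the $\limsup$-to-$\lim$ upgrade, and the antipodal invariance of volume) is routine.
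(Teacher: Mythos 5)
Your proposal is correct and follows essentially the same route as the paper's proof: identify $h^0\bigl(X_{\Sigma_{f_\theta}},\mathcal{O}_{X_{\Sigma_{f_\theta}}}(-mD)\bigr)$ with the lattice-point count $\#\bigl(mP_{-D}\cap M\bigr)$ via Proposition \ref{prop:polytopeofdivisor}, pass to the Ehrhart limit of Lemma \ref{lem:mixedvolume} inside Definition \ref{def:volumeoflinebundle}, and conclude with $P_{-D}=-\text{Newt}(f)$ from Theorem \ref{thm:newtonandpd}. Your added care about the $\Q$-Cartier scaling, the lower-dimensional degenerate case, and the volume-invariance of the antipodal map merely fills in details the paper leaves implicit.
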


\begin{proof} \label{proof:volumeoflinebundle}
     As in Definition \ref{def:polytopeofdivisor}, note that $P_{-D}$ gives the global sections of the line bundle $\mathcal{O}_{X_{\Sigma_{f_\theta}}}(-D)$. This amounts to saying that 
    $$
    h^0(X_{\Sigma_{f_\theta}},\mathcal{O}_{X_{\Sigma_{f_\theta}}}(m(-D)))=\# mP_{-D} \cap M.
    $$
    By Definition \ref{def:volumeoflinebundle}, we see that
    $$
    \text{Vol}(\mathcal{O}_{X_{\Sigma_{f_\theta}}}(-D))=\limsup\limits_{m\rightarrow \infty} \frac{h^0(X_{\Sigma_{f_\theta}},\mathcal{O}_{X_{\Sigma_{f_\theta}}}(m(-D)))}{m^n / n!}=n! \limsup\limits_{m \rightarrow \infty} \frac{\# mP_{-D} \cap M }{m^n}
    $$
    When the limit exists, we know that $\limsup\limits_{m\rightarrow \infty}=\lim\limits_{m \rightarrow \infty}$, which yields the following equality:
    \begin{align*}
      \text{Vol}(\mathcal{O}_{X_{\Sigma_{f_\theta}}}(-D))&=n! \lim\limits_{m \rightarrow \infty} \frac{\# mP_{-D} \cap M }{m^n}\\ &=\text{Vol}(P_{-D})=\text{Vol}(-\text{Newt}(f))[\text{by Theorem \ref{thm:newtonandpd}}]\\
      &=\text{Vol(Newt}(f))
    \end{align*}
    as desired.
\end{proof}
\begin{example}\label{ex:tropicalpolynomial}
    Continuing with Example \ref{ex:relutoric}, we note that the output piecewise linear function $f=0\oplus x\oplus y$ is a tropical polynomial. By Theorem $\ref{thm:newtonandpd}$, we know that $-f$ is convex in the sense of Definition \ref{def:basepointfree}, thus $D_{-f}=-D_f$ is basepoint free, and
    $$
    P_{-D_f}=\text{Conv}((0,0),(0,-1),(-1,0))=-\text{Newt}(f).
    $$
    Furthermore, by Theorem \ref{thm:polynomialpdvolume}, we have
    $$
    \text{Vol(Newt}(f))=\text{Vol}(\mathcal{O}_{X_{\Sigma_{f_\theta}}}(-D_f))=1.
    $$
\end{example}
\section{Complete classification of functions that can be realized by unbiased shallow ReLU Neural Networks}\label{sec:onehiddenlayer}
The classification is obtained by computing the intersection numbers of the ReLU Cartier divisor $D_f$ and torus-invariant curves. For the calculation purpose, we define the reduced ReLU representation of unbiased shallow ReLU neural networks as follows.
\begin{definition} \label{def:reducedReLU}
    Let $f\in \operatorname{ReLU}_{\vect{0}}^\Q(n_0,1)$and $f_\theta \in \mathcal{F}_{0}^\Q$ realizing $f$:
$$
f_\theta: \R^{n_0} \xrightarrow{\varsigma \circ L_1} \R^{n_1} \xrightarrow{L_2} \R,
$$
and let $\{a_{ij}\}_{i,j=1}^{n_1,n_0},\{b_{1i}\}_{i=1}^{n_1}$ denote the weights of $L_1$ and $L_2$ respectively. Then $f_\theta$ is said to be a \emph{reduced ReLU representation}, denoted as $f_\theta^{\text{red}}$, if
\begin{itemize}
    \item there is no zero row in $L_1$,
    \item $a_{ij}'$s are all integers,
    \item for $i \in\{1,\ldots,n_1\},\gcd(a_{i1},\ldots,a_{in_0})=1$, and
    \item any two rows of $L_1$ are not parallel to each other of the same direction.
\end{itemize}
\end{definition}
\begin{lemma} \label{lem:reducedReLU}(\textit{Algorithm towards reduced ReLU representation)}
    
    Let $f\in \operatorname{ReLU}_{\vect{0}}^\Q(n_0,1)$and $f_\theta \in \mathcal{F}_{0}^\Q$ realizing $f$:
$$
f_\theta: \R^{n_0} \xrightarrow{\varsigma \circ L_1} \R^{n_1} \xrightarrow{L_2} \R,
$$
and let $\{a_{ij}\}_{i,j=1}^{n_1,n_0},\{b_{1i}\}_{i=1}^{n_1}$ denote the weights of $L_1$ and $L_2$ respectively. Run the following algorithm:
\begin{itemize}
    \item If there is a zero row, say $j$th row in $L_1$, then we delete this $j$th row in $L_1$ and the $j$th entry in $\R^{n_1}$, and modify $L_2$ to be a $1 \times (n_1-1)$ matrix by deleting the $j$th entry of $L_2$.
    \item If there exist $k_1,k_2 \in \{1,\ldots,n_1\}$ with $k_1 \not = k_2$ such that the row-wise weights in the $k_1-$th row and the $k_2-$row are parallel of the same direction, i.e., $(a_{k_11},\ldots,a_{k_1n_0})=k(a_{k_21},\ldots,a_{k_2n_0})$ for some $k \in \Q_{>0}$, then delete the $k_1-$th row in $L_1$ and $\R^{n_1}$, delete $b_{1k_1}$ and modify $b_{1k_2}$ to $b_{1k_2}':=b_{1k_2}+k\cdot b_{1k_1}$.
    \item Write $\{a_{ij}\}$ as fractions, $a_{ij}=\frac{c_{ij}}{d_{ij}}$, where $c_{ij},d_{ij} \in \Z$. Let $l=$lcm$[(d_{ij})_{i,j=1}^{n_1,n_0}]$, and for each $i \in \{1,\ldots,n_1\}$, let $l_i=\gcd(l\cdot a_{i1},\ldots,l\cdot a_{in_0})$. Modify each $a_{ij}$ to $a_{ij}':=a_{ij} \cdot \frac{l}{l_i}$, and modify each $b_{1i}$ to $b_{1i}':=b_{1i}\cdot \frac{l_i}{l}$.
    \end{itemize} 
Then the output of the above algorithm is $f_\theta^{\text{red}}$, i.e., the \emph{reduced ReLU representation of $f_\theta$}. 
\end{lemma}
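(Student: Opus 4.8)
The plan is to verify two separate things about the network produced by the algorithm: first, that each of the three bulleted operations leaves the realized function unchanged, so that the final network $f_\theta^{\text{red}}$ still realizes $f$; and second, that after the operations have been applied — with the first two iterated until they no longer apply — the resulting weight matrix $L_1$ satisfies all four conditions of Definition \ref{def:reducedReLU}, while the network still has rational weights and no biases. Throughout I work with the expansion
\[
f(\vect{x})=\sum_{i=1}^{n_1} b_{1i}\,\max\{0,\langle \vect{a}_i,\vect{x}\rangle\},
\]
where $\vect{a}_i=(a_{i1},\dots,a_{in_0})$ is the $i$-th row of $L_1$; each operation is justified by a local identity for one or two summands.

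For function-invariance: in the first operation, if $\vect{a}_j=\vect 0$ then $\max\{0,\langle \vect{a}_j,\vect{x}\rangle\}=0$, so the $j$-th summand vanishes identically and can be deleted together with the $j$-th hidden coordinate and the $j$-th entry of $L_2$. In the second operation, if $\vect{a}_{k_1}=k\,\vect{a}_{k_2}$ with $k\in\Q_{>0}$ (automatically rational, since the weights lie in $\Q$), then pulling the positive constant out of the maximum gives $\max\{0,\langle\vect{a}_{k_1},\vect{x}\rangle\}=k\,\max\{0,\langle\vect{a}_{k_2},\vect{x}\rangle\}$, hence
\[
b_{1k_1}\max\{0,\langle\vect{a}_{k_1},\vect{x}\rangle\}+b_{1k_2}\max\{0,\langle\vect{a}_{k_2},\vect{x}\rangle\}=(b_{1k_2}+k\,b_{1k_1})\max\{0,\langle\vect{a}_{k_2},\vect{x}\rangle\},
\]
exactly the recombination prescribed. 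In the third operation, with $l=\mathrm{lcm}[(d_{ij})]$ and $l_i=\gcd(la_{i1},\dots,la_{in_0})$, the new row is $\vect{a}_i'=\tfrac{l}{l_i}\vect{a}_i$ and the new output weight $b_{1i}'=\tfrac{l_i}{l}b_{1i}$; since $\tfrac{l}{l_i}>0$ the same positive-scaling identity yields $b_{1i}'\max\{0,\langle\vect{a}_i',\vect{x}\rangle\}=b_{1i}\max\{0,\langle\vect{a}_i,\vect{x}\rangle\}$. Moreover $\vect{a}_i'$ has integer entries (because $l_i\mid la_{ij}$) and $\gcd(a_{i1}',\dots,a_{in_0}')=\gcd(la_{i1},\dots,la_{in_0})/l_i=1$.

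It remains to confirm the four conditions and that the steps do not interfere. The first operation, iterated, removes every zero row and terminates because each application strictly decreases the width $n_1$. The second operation, iterated, leaves no two rows parallel in the same direction, terminates for the same reason, and can neither re-create a zero row nor a new same-direction pair, since it only deletes rows (the surviving row $\vect{a}_{k_2}$ is untouched — only its $L_2$-weight changes). The third operation makes each row primitive and integral, as computed above, while rescaling each row by the strictly positive factor $l/l_i$; such rescaling preserves both "no zero row" and "no two rows parallel in the same direction." Hence $f_\theta^{\text{red}}$ satisfies all four conditions, still has rational weights and no biases (so lies in $\mathcal{F}_0^{\Q}$), and realizes $f$.

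The content is essentially routine; the closest thing to an obstacle is the bookkeeping: one must read the first two bullets as loops run to exhaustion (and check termination via the strictly decreasing width), and then observe that the rescaling in the third bullet does not undo the properties secured by the first two — immediate once one notes the scaling factor $l/l_i$ is positive. One should also record the harmless edge case that a recombined weight $b_{1k_2}'$ may become $0$; this is permitted, as Definition \ref{def:reducedReLU} places conditions only on $L_1$.
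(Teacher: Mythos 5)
Your proof is correct. The paper gives no proof of this lemma---it is stated as an algorithm, with Remark \ref{rmk:reducedReLU} simply asserting that the realized function is unchanged---so there is no argument to diverge from; your verification (vanishing of the deleted zero-row terms, positive homogeneity of $\max\{0,\cdot\}$ for the merging and rescaling steps, termination of the first two loops by decreasing width, and the observation that the positive rescaling $l/l_i$ preserves the properties secured earlier, including the harmless possibility $b_{1k_2}'=0$) is exactly the routine argument the author leaves implicit.
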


\begin{remark}\label{rmk:reducedReLU}
    The output piecewise linear functions of $f_\theta$ and $f_\theta^{\text{red}}$ are the same, which is $f$.
\end{remark}

\begin{lemma}\label{lem:oppositehyperplane}
    Given a reduced ReLU representation of an unbiased shallow neural network $f_\theta^{\text{red}}: \R^{n_0} \xrightarrow{\varsigma\circ L_1}\R^{n_1}\xrightarrow{L_2}\R$, if there exist $k_1,k_2 \in\{1,\ldots,n_1\}$ with $k_1\not = k_2$ such that $(a_{k_11},\ldots,a_{k_1n_0})=-(a_{k_21},\ldots,a_{k_2n_0})$, then $$D_{f_{k_1}}=D_{f_{k_2}}.$$
\end{lemma}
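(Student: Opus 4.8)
The plan is to reduce the statement to the elementary identity $\max\{0,t\}-\max\{0,-t\}=t$ together with the correspondence \eqref{eq:functiondivisor} identifying piecewise linear functions modulo linear terms with $\Q$-Cartier divisors. Write $\ell_i(\vect{x})=\sum_{j=1}^{n_0}a_{ij}x_j$ for the linear functional given by the $i$th row of $L_1$; since the network is unbiased, the $i$th hidden neuron computes the piecewise linear function $f_i=\max\{0,\ell_i\}$, and the hypothesis is precisely that $\ell_{k_2}=-\ell_{k_1}$. The strategy has three steps: (i) check that $f_{k_1}$ and $f_{k_2}$ are each linear on the cones of the ReLU fan, so that $D_{f_{k_1}}$ and $D_{f_{k_2}}$ are well-defined $\Q$-Cartier divisors on the same ReLU toric variety $X_{\Sigma_{f_\theta^{\text{red}}}}$; (ii) show $f_{k_1}-f_{k_2}$ is linear; (iii) invoke \eqref{eq:functiondivisor}.

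For step (i): since $k=1$, the canonical polyhedral complex $\Sigma_{f_\theta^{\text{red}}}$ is the fan obtained by refining $\R^{n_0}$ along the hyperplane arrangement $\mathcal{A}^{(1)}$, and each $f_i=\max\{0,\ell_i\}$ is linear on either side of the hyperplane $\{\ell_i=0\}=H_i^{(1)}\in\mathcal{A}^{(1)}$; here one uses that a reduced representation permits $\ell_{k_1}$ and $\ell_{k_2}=-\ell_{k_1}$ (parallel of opposite direction, which Definition \ref{def:reducedReLU} allows), and that these two rows cut out a single hyperplane of the arrangement. With integral weights, each $f_i$ is therefore an integral support function on $\Sigma_{f_\theta^{\text{red}}}$ and determines a $\Q$-Cartier divisor $D_{f_i}$ on $X_{\Sigma_{f_\theta^{\text{red}}}}$ exactly as in Definition \ref{def:ReLUdivisor}.

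For step (ii): one checks the pointwise identity $\max\{0,t\}-\max\{0,-t\}=t$ by separating the cases $t\geq 0$ and $t<0$, and applies it with $t=\ell_{k_1}(\vect{x})$, using $\ell_{k_2}=-\ell_{k_1}$, to obtain
\[
f_{k_1}-f_{k_2}=\max\{0,\ell_{k_1}\}-\max\{0,-\ell_{k_1}\}=\ell_{k_1},
\]
a linear function on $\R^{n_0}$. For step (iii): since $f_{k_1}-f_{k_2}$ is linear, $f_{k_1}$ and $f_{k_2}$ represent the same class in $\{\text{piecewise linear functions}\}/\{\text{linear terms}\}$, hence by \eqref{eq:functiondivisor} they correspond to the same $\Q$-Cartier divisor, i.e.\ $D_{f_{k_1}}=D_{f_{k_2}}$.

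I do not expect a genuine obstacle here: the arithmetic content is just the identity $\max\{0,t\}-\max\{0,-t\}=t$. The only parts requiring any care are the bookkeeping in step (i) — confirming that $f_{k_1}$ and $f_{k_2}$ really are linear on the cones of $\Sigma_{f_\theta^{\text{red}}}$, so that the two divisors live on a common toric variety — and being explicit that "differing by a linear term" is precisely the equivalence relation quotiented out in \eqref{eq:functiondivisor}.
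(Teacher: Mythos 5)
Your proposal is correct and is essentially the paper's own argument: the paper's tropical manipulation $0\oplus x_1^{-a_{k_11}}\cdots x_{n_0}^{-a_{k_1n_0}}=\bigl(0\oplus x_1^{a_{k_11}}\cdots x_{n_0}^{a_{k_1n_0}}\bigr)\oslash x_1^{a_{k_11}}\cdots x_{n_0}^{a_{k_1n_0}}$ is exactly your classical identity $\max\{0,-t\}=\max\{0,t\}-t$, after which both arguments conclude via the correspondence \eqref{eq:functiondivisor} that functions differing by a linear term give the same $\Q$-Cartier divisor. Your step (i), checking that both divisors are supported on the common ReLU fan (the paper only notes $H_{k_1}^{(1)}=H_{k_2}^{(1)}$), is a harmless extra piece of bookkeeping, not a different route.
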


\begin{proof}
    If $(a_{k_11},\ldots,a_{k_1n_0})=-(a_{k_21},\ldots,a_{k_2n_0})$, then $H_{k_1}^{(1)}=H_{k_2}^{(1)}$ and
    \begin{align*}
     f_{k_2}&=0\oplus x_1^{a_{k_21}}\cdots x_{n_0}^{a_{k_2n_0}}=0\oplus x_1^{-a_{k_11}}\cdots x_{n_0}^{-a_{k_1n_0}}\\&=\frac{0\oplus x_1^{a_{k_11}}\cdots x_{n_0}^{a_{k_1n_0}}}{x_1^{a_{k_11}}\cdots x_{n_0}^{a_{k_1n_0}}}=\frac{f_{k_1}}{x_1^{a_{k_11}}\cdots x_{n_0}^{a_{k_1n_0}}}.   
    \end{align*}

    Thus, $f_{k_1}$ and $f_{k_2}$ differ by a linear term, and by \eqref{eq:functiondivisor}, we have $D_{f_{k_1}}=D_{f_{k_2}}$ as desired.
\end{proof}

\begin{lemma}\label{lem:zerointersection}
    Given a reduced ReLU representation of an unbiased shallow neural network $f_\theta^{\text{red}}: \R^{n_0} \xrightarrow{\varsigma\circ L_1}\R^{n_1}\xrightarrow{L_2}\R$, then for $j \in \{1,\ldots,n_1\}$ and any $\tau \in H_i^{(1)}$ with $i \not = j$ and $H_i^{(1)} \not = H_j^{(1)}$, we have $$D_{f_j} \cdot V(\tau)=0.$$
\end{lemma}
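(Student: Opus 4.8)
The plan is to evaluate the intersection number directly from Definition~\ref{def:Cartierdivisorintersectcurve} and reduce it to a statement about where the hidden neuron $f_j=\max\{0,\langle a_j,\cdot\rangle\}$ is linear; here $a_j=(a_{j1},\dots,a_{jn_0})\in\Z^{n_0}$ is the $j$-th row of $L_1$, which is integral and nonzero by the reducedness of $f_\theta^{\text{red}}$. Write the wall as $\tau=\sigma\cap\sigma'$ with $\sigma,\sigma'$ the two maximal cones of $\Sigma_{f_\theta^{\text{red}}}$ having $\tau$ as a facet (these exist and are unique since the arrangement fan is complete), so that $V(\tau)\iso\mathbb{P}^1$ and, by Definition~\ref{def:Cartierdivisorintersectcurve} together with Remark~\ref{rmk:QCartierintersection}, $D_{f_j}\cdot V(\tau)=\langle m_\sigma-m_{\sigma'},u\rangle$ for the Cartier data $m_\sigma,m_{\sigma'}$ of $D_{f_j}$ on $\sigma,\sigma'$ and a suitable $u\in\sigma'\cap N$. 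By Lemma~\ref{lem:slopevector} and Remark~\ref{rmk:slopevector}, $m_\sigma$ and $m_{\sigma'}$ are exactly the slope vectors of $f_j$ on $\sigma$ and on $\sigma'$, so it is enough to show $f_j$ has the same slope on $\sigma$ as on $\sigma'$.

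Next I would note that $f_j$ is affine linear on each of the two closed half-spaces bounded by $H_j^{(1)}=\{\langle a_j,\cdot\rangle=0\}$, being identically $0$ on $\{\langle a_j,\cdot\rangle\le 0\}$ and equal to $\langle a_j,\cdot\rangle$ on $\{\langle a_j,\cdot\rangle\ge 0\}$; in particular its only non-linear locus is $H_j^{(1)}$. Thus the claim reduces to: $\sigma$ and $\sigma'$ lie in the same closed half-space of $H_j^{(1)}$. For this I would use that, for a shallow network, $\Sigma_{f_\theta^{\text{red}}}=\mathcal{C}_{f_\theta^{\text{red}}}$ is the fan cut out by the arrangement $\{H_1^{(1)},\dots,H_{n_1}^{(1)}\}$ (Definition~\ref{def:polyhedralcomplex}); hence $H_j^{(1)}$ is a union of cones of $\Sigma_{f_\theta^{\text{red}}}$, and every cone of the fan either lies inside $H_j^{(1)}$ or has relative interior contained in exactly one of the two open half-spaces of $H_j^{(1)}$.

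Then I would check $\tau\not\subseteq H_j^{(1)}$: being $(n_0-1)$-dimensional and contained in the hyperplane $H_i^{(1)}$, the cone $\tau$ spans $H_i^{(1)}$, so $\tau\subseteq H_j^{(1)}$ would force $H_i^{(1)}=\operatorname{Span}(\tau)\subseteq H_j^{(1)}$, i.e.\ $H_i^{(1)}=H_j^{(1)}$, contrary to hypothesis. Hence $\operatorname{relint}(\tau)$ lies in one open half-space of $H_j^{(1)}$, say $\{\langle a_j,\cdot\rangle>0\}$. Since $\sigma$ and $\sigma'$ are maximal cones they cannot be contained in the hyperplane $H_j^{(1)}$, so each lies in a single closed half-space of it; and since $\operatorname{relint}(\tau)\subseteq\overline{\sigma}\cap\overline{\sigma'}$ meets $\{\langle a_j,\cdot\rangle>0\}$, that closed half-space must be $\{\langle a_j,\cdot\rangle\ge 0\}$ for both. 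On this half-space $f_j=\langle a_j,\cdot\rangle$, so $m_\sigma=m_{\sigma'}$ (both equal to $a_j$ as points of $M$) and therefore $D_{f_j}\cdot V(\tau)=\langle m_\sigma-m_{\sigma'},u\rangle=0$.

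The only step that is not routine bookkeeping with the definitions is the combinatorial observation that a wall of the arrangement fan not contained in $H_j^{(1)}$ is flanked by two maximal cones lying on the same side of $H_j^{(1)}$; once $\Sigma_{f_\theta^{\text{red}}}$ is identified with a hyperplane-arrangement fan this follows just by tracking the sign of $\langle a_j,\cdot\rangle$ on relative interiors, so I do not expect a genuine obstacle. One should also keep in mind the immediate point that $f_j$ bends only along $H_j^{(1)}$, so there is no other mechanism by which its slope could change across $\tau$.
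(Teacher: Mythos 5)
Your proposal is correct and follows essentially the same route as the paper: the paper's proof simply observes that the Cartier data of $D_{f_j}$ agree on the two maximal cones flanking $\tau$ (so $m_\sigma-m_{\sigma'}=0$) and invokes Definition~\ref{def:Cartierdivisorintersectcurve}. You supply the justification the paper leaves implicit—that $\tau\not\subseteq H_j^{(1)}$ forces both adjacent maximal cones onto the same side of $H_j^{(1)}$, where $f_j$ has a single slope—which is exactly the intended argument, just spelled out.
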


\begin{proof}
    Let $\tau=\sigma \cap \sigma'$. Note that for $D_{f_j}$, the difference of the Cartier data $m_\sigma - m_{\sigma'}=0$. Then by definition of intersection number (Definition \ref{def:Cartierdivisorintersectcurve}), we see that $D_{f_j}\cdot V(\tau)=0$.
\end{proof}

\OneHiddenLayer*

\begin{proof}
    Let $\{a_{ij}\}_{i,j=1}^{n_1,n_0},\{b_{1i}\}_{i=1}^{n_1}$ denote the weights of $L_1$ and $L_2$ respectively. Denote the piecewise linear functions in the hidden layer by $f_i\coloneqq 0 \oplus x_1^{a_{i1}}\cdots x_{n_0}^{a_{in_0}}$. Then $f_i$ corresponds to the Cartier divisor $D_{f_i}$ supported on the ReLU fan $\Sigma_{f^\text{red}_\theta}$ with Cartier data on one side of the hyperplane $H^{(1)}_i$ equal to $a_{i1}e_1^\vee+\ldots +a_{in_0}e_{n_0}^\vee$ while on the other side equal to $0$. Note that 
    $$
    D_f=\sum_{i=1}^{n_1}b_{1i}D_{f_i}.    
    $$ Then by Lemma \ref{lem:zerointersection} and linearity of intersection number with respect to divisors as in Proposition \ref{prop:intersectionnumber}, we may conclude that for any two walls $\tau_1,\tau_2 \subseteq H^{(1)}_i$, if there exist $i'\not =i$ such that $H_i^{(1)}=H_{i'}^{(1)}$, then we have
    $$
    D_f\cdot V(\tau_1)=b_{1i} [D_{f_i} \cdot V(\tau_1)]+b_{1i'}[D_{f_{i'}}\cdot V(\tau_1)]
    $$
    and 
    $$
    D_f\cdot V(\tau_2)=b_{1i}[D_{f_i} \cdot V(\tau_2)]+b_{1i'}[D_{f_{i'}}\cdot V(\tau_2)].
    $$
    By Lemma \ref{lem:oppositehyperplane}, we have $D_{f_i}=D_{f_i'}$ and hence
    $$
    D_f\cdot V(\tau_1)=(b_{1i}+b_{1i'}) [D_{f_i} \cdot V(\tau_1)]
    $$
    and 
    $$
    D_f\cdot V(\tau_2)=(b_{1i}+b_{1i'})[D_{f_i} \cdot V(\tau_2)].
    $$
    Let $\tau_1=\sigma_1 \cap \sigma_1'$. We may assume that $m_{\sigma_1}=a_{i1}e_1^\vee+\ldots +a_{in_0}e_{n_0}^\vee,m_{\sigma_1'}=0$. Then by definition of intersection number (Definition \ref{def:Cartierdivisorintersectcurve}),
    $$
    D_{f_i} \cdot V(\tau_1)\coloneqq \langle m_{\sigma_1}-m_{\sigma_1'},u_1\rangle=\langle a_{i1}e_1^\vee+\ldots +a_{in_0}e_{n_0}^\vee,u_1\rangle
    $$
    where $u_1 \in \sigma_1' \cap N$ maps to the minimal generator of $\bar{\sigma_1'}\subseteq N(\tau_1)_\R$. As the set up in Remark \ref{rmk:pairing}, we notice that the angle between the vector $a_{i1}e_1^\vee+\ldots +a_{in_0}e_{n_0}^\vee$ and $u_1$ is acute, and thus $
    \langle a_{i1}e_1^\vee+\ldots +a_{in_0}e_{n_0}^\vee,u_1 \rangle <0.
    $
    Let $\left \{w_l=\sum_{j=1}^{n_0} \omega_{lj}e_j\right \}_{l=1}^m$ be the minimal generators of $\tau_1$, for some $m \in \N$ and $u_1=\sum_{j=1}^{n_0}c_je_j$, then the following $(m+1) \times n_0$ matrix $$A=
    \begin{bmatrix}
        c_1 & \cdots & c_{n_0} \\
        \omega_{11} & \cdots & \omega_{1n_0}\\
        \vdots & \cdots & \vdots \\
        \omega_{m1} & \cdots & \omega_{mn_0}
    \end{bmatrix}
    $$
    will induce an integral injection from $\Z^{n_0}$ to $\Z^{m+1}$. Consequently there exists an integral $n_0 \times (m+1)$ matrix $B$ such that $BA=I_{n_0 \times n_0}$. Let $v=(a_{i1},\ldots,a_{in_0})^T$ and note that
    $$
    1=\gcd(v^T) \leq \gcd(Av)^T \leq \gcd(BAv)^T=\gcd(v^T)=1.
    $$
    Thus $\gcd(Av)^T=1$. Let us examine $Av$ as follows
    $$
    Av=\begin{bmatrix}
        \sum_{k=1}^{n_0} c_ka_{ik} \\
        \sum_{k=1}^{n_0} \omega_{1k}a_{ik}\\
        \vdots \\
        \sum_{k=1}^{n_0} \omega_{mk}a_{ik}
    \end{bmatrix}.
    $$
    Since $w_l \in \tau_1$, we have $\sum_{k=1}^{n_0}\omega_{lk}a_{ik}=0,\forall l \in \{1,\ldots,m\}$. It follows that $$
    \gcd(Av)^T = \Big \vert \sum_{k=1}^{n_0}c_ka_{ik} \Big \vert=1.
    $$
    Therefore,
    $$
    D_{f_i} \cdot V(\tau_1)=\langle a_{i1}e_1^\vee+\ldots+ a_{in_0}e_{n_0}^\vee,u_1\rangle=-\Big \vert \sum_{k=1}^{n_0}c_ka_{ik} \Big \vert=-1
    $$
    Similarly, we can compute that $D_{f_i} \cdot V(\tau_2)=-1$. It follows that 
    $$
    D_f \cdot V(\tau_1)=-(b_{1i}+b_{1i'})=D_f \cdot V(\tau_2)
    $$
    as desired. If there does not exist $i' \not = i$ such that $H_i^{(1)}=H_{i'}^{(1)}$, then by similar computation we have
    $$
    D_f \cdot V(\tau_1)=b_{1i} [D_{f_i} \cdot V(\tau_1)]=-b_{1i}=b_{1i}[D_{f_i}\cdot V({\tau_2})]=D_f \cdot V(\tau_2)
    $$
    as desired.
\end{proof}
\OneHiddenLayerConverse*
\begin{proof}
    Say that there are $n_1$ full hyperplanes in $\Sigma$, denoted as $H^{(1)}_i$, for $i \in \{1,\ldots,n_1\}$. Let $$H^{(1)}_i\coloneqq \left \{(x_1,\ldots,x_{n_0}) \in \R^{n_0} \mid a_{i1}x_1+\ldots+a_{in_0}x_{n_0}=0,\gcd(a_{i1},\ldots,a_{in_0})=1\right \}.$$ 
    Say that for any $\tau_1,\tau_2 \in H_i^{(1)}$ and $D_f\cdot V(\tau_1)=D_f \cdot V(\tau_2)=-t_i$, where $t_i \in \Q$, for $i \in \{1,\ldots,n_1\}$. Then $f$ is realizable by the following ReLU neural network, up to a linear shift:
    $$
    l_\theta :\R^{n_0} \xrightarrow{\varsigma \circ L_1} \R^{n_1} \xrightarrow{L_2} \R
    $$
    where 
    $$L_1=
    \begin{bmatrix}
    a_{11}& \cdots & a_{n_01}\\
    a_{21}& \cdots & a_{2n_0}\\
    \vdots & \cdots & \vdots \\
    a_{n_11} & \cdots & a_{n_1n_0}
    \end{bmatrix}
    $$ and 
    $$L_2=
    \begin{bmatrix}
        t_1 & \cdots & t_{n_1}
    \end{bmatrix}.
    $$
    If the output piecewise linear function of $l_\theta$ is precisely $f$, then we are done. Otherwise, denote by $f'$ the output piecewise linear function of $l_\theta$. Then $f-f'=g$ for $g:\R^{n_0}\to \R$ linear. By Theorem \ref{thm:functiondivisor}, we can also conclude that $f \in \operatorname{ReLU}_{\vect{0}}^\Q(n_0,1)$. Thus, $f \in \operatorname{ReLU}_{\vect{0}}^\Q(n_0,1)$ as desired.
\end{proof}
\begin{example}\label{ex:geometrynonexample}
    According to Theorems \ref{thm:onehiddenlayer} and \ref{thm:onehiddenlayerconverse}, we have an equivalent condition for a function to be realizable by shallow unbiased ReLU neural networks with rational weights. We may interpret the equivalent condition as some kind of symmetry, which is, nevertheless, more than merely a symmetric bent hyperplane arrangement. The following piecewise linear function has a symmetric bent hyperplane arrangement while not realizable by shallow ReLU neural networks with rational weights by Theorem \ref{thm:onehiddenlayer}:
    $$
    \begin{tikzpicture}
    \draw[black,thick] (-2.5,0) -- (2.5,0);
    \draw[black, thick] (-2,2) -- (2,-2);
    \draw[black, thick] (-2,-2) -- (2,2);
    \draw[black] (-1.5,0.75) node{$y$};
    \draw[black] (-1.5,-0.75) node{$0$};
    \draw[black] (0, 1.5) node{$x+2y$};
    \draw[black] (1.5,0.75) node{$3y$};
    \draw[black] (1.5,-0.75) node{$-4y$};
    \draw[black] (0,-1.5) node{$2x-2y$};
    \end{tikzpicture}.
    $$
\end{example}

\bibliographystyle{plain}
\bibliography{references}
\bigskip

\textsc{Boston College; Department of Mathematics; 521 Maloney Hall; Chestnut Hill, MA 02467}

\textit{Email address:} \texttt{fugh@bc.edu}
\end{document}